
\documentclass[12pt,a4paper]{article}
\usepackage{fullpage}
\usepackage[utf8]{inputenc}
\usepackage[czech,english]{babel}

\usepackage{lmodern}

\usepackage{amsmath}        
\usepackage{amsfonts}       
\usepackage{amssymb}
\usepackage{amsthm}         
\usepackage{bbding}         
\usepackage{bm}             
\usepackage{graphicx}       
\usepackage{subcaption}
\usepackage{fancyvrb}       
\usepackage[nottoc]{tocbibind} 
\usepackage{dcolumn}        
\usepackage{booktabs}       
\usepackage{paralist}       
\usepackage{xcolor}         
\usepackage{tikz-cd}

\usepackage{enumitem}
\usepackage{placeins}

\allowdisplaybreaks
\overfullrule=1mm

\newcounter{cptTh}
\setcounter{cptTh}{0}
\newtheorem{theorem}[cptTh]{Theorem}

\newtheorem{corollary}[cptTh]{Corollary}
\newtheorem{conjecture}[cptTh]{Conjecture}

\newtheorem{definition}[cptTh]{Definition}
\newtheorem{observation}[cptTh]{Observation}

\usepackage[linesnumbered,longend,noline,figure]{algorithm2e}
\SetKwProg{Fn}{Function}{}{end}
\DontPrintSemicolon


\DefineVerbatimEnvironment{code}{Verbatim}{fontsize=\small, frame=single}

\newcommand{\R}{\mathbb{R}}
\newcommand{\N}{\mathbb{N}}




\newcommand{\abs}[1]{\left|{#1}\right|}


\usepackage{newunicodechar}

\newunicodechar{⟶}{\ensuremath{\longrightarrow}}
\newunicodechar{⟵}{\ensuremath{\longleftarrow}}
\newunicodechar{⟷}{\ensuremath{\longleftrightarrow}}

\newunicodechar{⟹}{\ensuremath{\Longrightarrow}}
\newunicodechar{⟸}{\ensuremath{\Longleftarrow}}
\newunicodechar{⟺}{\ensuremath{\Longleftrightarrow}}

\newunicodechar{→}{\ensuremath{\rightarrow}}
\newunicodechar{←}{\ensuremath{\leftarrow}}
\newunicodechar{↔}{\ensuremath{\leftrightarrow}}

\newunicodechar{⇐}{\ensuremath{\Rightarrow}}
\newunicodechar{⇒}{\ensuremath{\Leftarrow}}
\newunicodechar{⇔}{\ensuremath{\Leftrightarrow}}

\makeatletter

\def\utfch@rundefined{}
\def\defutfmathch@r#1#2{%
  \ifx#2\utfch@rundefined%
  \DeclareUnicodeCharacter{#1}{?%
  }%
  \else%
  \DeclareUnicodeCharacter{#1}{\ensuremath{#2}}%
  \fi%
}

\defutfmathch@r{0391}{A}
\defutfmathch@r{0392}{B}
\defutfmathch@r{0393}{\Gamma}
\defutfmathch@r{0394}{\Delta}
\defutfmathch@r{0395}{E}
\defutfmathch@r{0396}{Z}
\defutfmathch@r{0397}{H}
\defutfmathch@r{0398}{\Theta}
\defutfmathch@r{0399}{I}
\defutfmathch@r{039A}{K}
\defutfmathch@r{039B}{\Lambda}
\defutfmathch@r{039C}{M}
\defutfmathch@r{039D}{N}
\defutfmathch@r{039E}{\Xi}
\defutfmathch@r{039F}{O}
\defutfmathch@r{03A0}{\Pi}
\defutfmathch@r{03A1}{P}
\defutfmathch@r{03A3}{\Sigma}
\defutfmathch@r{03A4}{T}
\defutfmathch@r{03A5}{Y}
\defutfmathch@r{03A6}{\Phi}
\defutfmathch@r{03A7}{X}
\defutfmathch@r{03A8}{\Psi}
\defutfmathch@r{03A9}{\Omega}

\defutfmathch@r{03B1}{\alpha}
\defutfmathch@r{03B2}{\beta}
\defutfmathch@r{03B3}{\gamma}
\defutfmathch@r{03B4}{\delta}
\defutfmathch@r{03B5}{\epsilon}
\defutfmathch@r{03B6}{\zeta}
\defutfmathch@r{03B7}{\eta}
\defutfmathch@r{03B8}{\theta}
\defutfmathch@r{03B9}{\iota}
\defutfmathch@r{03BA}{\kappa}
\defutfmathch@r{03BB}{\lambda}
\defutfmathch@r{03BC}{\mu}
\defutfmathch@r{03BD}{\nu}
\defutfmathch@r{03BE}{\xi}
\defutfmathch@r{03BF}{o}
\defutfmathch@r{03C0}{\pi}
\defutfmathch@r{03C1}{\rho}
\defutfmathch@r{03C2}{\varsigma}
\defutfmathch@r{03C3}{\sigma}
\defutfmathch@r{03C4}{\tau}
\defutfmathch@r{03C5}{\upsilon}
\defutfmathch@r{03C6}{\phi}
\defutfmathch@r{03C7}{\chi}
\defutfmathch@r{03C8}{\psi}
\defutfmathch@r{03C9}{\omega}
\defutfmathch@r{03D1}{\vartheta}
\defutfmathch@r{03D6}{\varpi}

\DeclareUnicodeCharacter{00D7}{\ensuremath{\times}}

\defutfmathch@r{2192}{\to}

\defutfmathch@r{2200}{\forall} 
\defutfmathch@r{2201}{\complement} 
\defutfmathch@r{2202}{\partial} 
\defutfmathch@r{2203}{\exists} 
\defutfmathch@r{2204}{\nexists} 
\defutfmathch@r{2205}{\emptyset\varnothing} 
\defutfmathch@r{2206}{\Delta} 
\defutfmathch@r{2207}{\nabla} 
\defutfmathch@r{2208}{\in} 
\defutfmathch@r{2209}{\not\in} 
\defutfmathch@r{220A}{{\scriptstyle\in}} 
\defutfmathch@r{220B}{\ni} 
\defutfmathch@r{220C}{\not\ni} 
\defutfmathch@r{220D}{{\scriptstyle\ni}} 
\defutfmathch@r{220E}{\utfch@rundefined} 
\defutfmathch@r{220F}{\prod} 
\defutfmathch@r{2210}{\coprod} 
\defutfmathch@r{2211}{\sum} 
\defutfmathch@r{2212}{-} 
\defutfmathch@r{2213}{\mp} 
\defutfmathch@r{2214}{\dot{+}} 
\defutfmathch@r{2215}{/} 
\defutfmathch@r{2216}{\backslash} 
\defutfmathch@r{2217}{\ast} 
\defutfmathch@r{2218}{\circ} 
\defutfmathch@r{2219}{\cdot} 
\defutfmathch@r{221A}{\sqrt{}} 
\defutfmathch@r{221B}{\sqrt[3]{}} 
\defutfmathch@r{221C}{\sqrt[4]{}} 
\defutfmathch@r{221D}{\propto} 
\defutfmathch@r{221E}{\infty} 
\defutfmathch@r{221F}{\utfch@rundefined} 
\defutfmathch@r{2220}{\angle} 
\defutfmathch@r{2221}{\measuredangle} 
\defutfmathch@r{2222}{\sphericalangle} 
\defutfmathch@r{2223}{\mid} 
\defutfmathch@r{2224}{\nmid} 
\defutfmathch@r{2225}{\parallel} 
\defutfmathch@r{2226}{\nparallel} 
\defutfmathch@r{2227}{\wedge} 
\defutfmathch@r{2228}{\vee} 
\defutfmathch@r{2229}{\cap} 
\defutfmathch@r{222A}{\cup} 
\defutfmathch@r{222B}{\int} 
\defutfmathch@r{222C}{\iint} 
\defutfmathch@r{222D}{\iiint} 
\defutfmathch@r{222E}{\oint} 
\defutfmathch@r{222F}{\oiint} 
\defutfmathch@r{2230}{\utfch@rundefined} 
\defutfmathch@r{2231}{\utfch@rundefined} 
\defutfmathch@r{2232}{\utfch@rundefined} 
\defutfmathch@r{2233}{\utfch@rundefined} 
\defutfmathch@r{2234}{\therefore} 
\defutfmathch@r{2235}{\because} 
\defutfmathch@r{2236}{:} 
\defutfmathch@r{2237}{::} 
\defutfmathch@r{2238}{\utfch@rundefined} 
\defutfmathch@r{2239}{-:} 
\defutfmathch@r{223A}{\utfch@rundefined} 
\defutfmathch@r{223B}{\utfch@rundefined} 
\defutfmathch@r{223C}{\sim} 
\defutfmathch@r{223D}{\backsim} 
\defutfmathch@r{223E}{\utfch@rundefined} 
\defutfmathch@r{223F}{\utfch@rundefined} 
\defutfmathch@r{2240}{\wr} 
\defutfmathch@r{2241}{\nsim} 
\defutfmathch@r{2242}{\utfch@rundefined} 
\defutfmathch@r{2243}{\simeq} 
\defutfmathch@r{2244}{\not\simeq} 
\defutfmathch@r{2245}{\utfch@rundefined} 
\defutfmathch@r{2246}{\utfch@rundefined} 
\defutfmathch@r{2247}{\utfch@rundefined} 
\defutfmathch@r{2248}{\approx} 
\defutfmathch@r{2249}{\not\approx} 
\defutfmathch@r{224A}{\utfch@rundefined} 
\defutfmathch@r{224B}{\utfch@rundefined} 
\defutfmathch@r{224C}{\utfch@rundefined} 
\defutfmathch@r{224D}{\asymp} 
\defutfmathch@r{224E}{\Bumpeq} 
\defutfmathch@r{224F}{\bumpeq} 
\defutfmathch@r{2250}{\doteq} 
\defutfmathch@r{2251}{\doteqdot} 
\defutfmathch@r{2252}{\fallingdotseq} 
\defutfmathch@r{2253}{\risingdotseq} 
\defutfmathch@r{2254}{:=} 
\defutfmathch@r{2255}{=:} 
\defutfmathch@r{2256}{\utfch@rundefined} 
\defutfmathch@r{2257}{\circeq} 
\defutfmathch@r{2258}{\utfch@rundefined} 
\defutfmathch@r{2259}{\utfch@rundefined} 
\defutfmathch@r{225A}{\utfch@rundefined} 
\defutfmathch@r{225B}{\utfch@rundefined} 
\defutfmathch@r{225C}{\triangleq} 
\defutfmathch@r{225D}{\utfch@rundefined} 
\defutfmathch@r{225E}{\utfch@rundefined} 
\defutfmathch@r{225F}{\utfch@rundefined} 
\defutfmathch@r{2260}{\not=} 
\defutfmathch@r{2261}{\equiv} 
\defutfmathch@r{2262}{\not\equiv} 
\defutfmathch@r{2263}{\utfch@rundefined} 
\defutfmathch@r{2264}{\leq} 
\defutfmathch@r{2265}{\geq} 
\defutfmathch@r{2266}{\leqq} 
\defutfmathch@r{2267}{\geqq} 
\defutfmathch@r{2268}{\gneqq} 
\defutfmathch@r{2269}{\lneqq} 
\defutfmathch@r{226A}{\ll} 
\defutfmathch@r{226B}{\gg} 
\defutfmathch@r{226C}{\between} 
\defutfmathch@r{226D}{\utfch@rundefined} 
\defutfmathch@r{226E}{\nless} 
\defutfmathch@r{226F}{\ngtr} 
\defutfmathch@r{2270}{\nleq} 
\defutfmathch@r{2271}{\ngeq} 
\defutfmathch@r{2272}{\utfch@rundefined} 
\defutfmathch@r{2273}{\utfch@rundefined} 
\defutfmathch@r{2274}{\utfch@rundefined} 
\defutfmathch@r{2275}{\utfch@rundefined} 
\defutfmathch@r{2276}{\lessgtr} 
\defutfmathch@r{2277}{\gtrless} 
\defutfmathch@r{2278}{\not\lessgtr} 
\defutfmathch@r{2279}{\not\gtrless} 
\defutfmathch@r{227A}{\prec} 
\defutfmathch@r{227B}{\succ} 
\defutfmathch@r{227C}{\preceq} 
\defutfmathch@r{227D}{\succeq} 
\defutfmathch@r{227E}{\precsim} 
\defutfmathch@r{227F}{\succsim} 
\defutfmathch@r{2280}{\not\prec} 
\defutfmathch@r{2281}{\not\succ} 
\defutfmathch@r{2282}{\subset} 
\defutfmathch@r{2283}{\supset} 
\defutfmathch@r{2284}{\not\subset} 
\defutfmathch@r{2285}{\not\supset} 
\defutfmathch@r{2286}{\subseteq} 
\defutfmathch@r{2287}{\subseteq} 
\defutfmathch@r{2288}{\not\subseteq} 
\defutfmathch@r{2289}{\not\supseteq} 
\defutfmathch@r{228A}{\utfch@rundefined} 
\defutfmathch@r{228B}{\utfch@rundefined} 
\defutfmathch@r{228C}{\utfch@rundefined} 
\defutfmathch@r{228D}{\utfch@rundefined} 
\defutfmathch@r{228E}{\uplus} 
\defutfmathch@r{228F}{\sqsubset} 
\defutfmathch@r{2290}{\sqsupset} 
\defutfmathch@r{2291}{\sqsubseteq} 
\defutfmathch@r{2292}{\sqsupseteq} 
\defutfmathch@r{2293}{\sqcap} 
\defutfmathch@r{2294}{\sqcup} 
\defutfmathch@r{2295}{\oplus} 
\defutfmathch@r{2296}{\ominus} 
\defutfmathch@r{2297}{\otimes} 
\defutfmathch@r{2298}{\oslash} 
\defutfmathch@r{2299}{\odot} 
\defutfmathch@r{229A}{\circledcirc} 
\defutfmathch@r{229B}{\circledast} 
\defutfmathch@r{229C}{} 
\defutfmathch@r{229D}{\circleddash} 
\defutfmathch@r{229E}{\boxplus} 
\defutfmathch@r{229F}{\boxminus} 
\defutfmathch@r{22A0}{\boxtimes} 
\defutfmathch@r{22A1}{\boxdot} 
\defutfmathch@r{22A2}{\vdash} 
\defutfmathch@r{22A3}{\dashv} 
\defutfmathch@r{22A4}{\top} 
\defutfmathch@r{22A5}{\bot} 
\defutfmathch@r{22A6}{\vdash} 
\defutfmathch@r{22A7}{\models} 
\defutfmathch@r{22A8}{\utfch@rundefined} 
\defutfmathch@r{22A9}{\utfch@rundefined} 
\defutfmathch@r{22AA}{\utfch@rundefined} 
\defutfmathch@r{22AB}{\utfch@rundefined} 
\defutfmathch@r{22AC}{\not\vdash} 
\defutfmathch@r{22AD}{\utfch@rundefined} 
\defutfmathch@r{22AE}{\utfch@rundefined} 
\defutfmathch@r{22AF}{\utfch@rundefined} 
\defutfmathch@r{22B0}{\utfch@rundefined} 
\defutfmathch@r{22B1}{\utfch@rundefined} 
\defutfmathch@r{22B2}{\lhd} 
\defutfmathch@r{22B3}{\rhd} 
\defutfmathch@r{22B4}{\unlhd} 
\defutfmathch@r{22B5}{\unrhd} 
\defutfmathch@r{22B6}{\utfch@rundefined} 
\defutfmathch@r{22B7}{\utfch@rundefined} 
\defutfmathch@r{22B8}{\utfch@rundefined} 
\defutfmathch@r{22B9}{\utfch@rundefined} 
\defutfmathch@r{22BA}{\utfch@rundefined} 
\defutfmathch@r{22BB}{\veebar} 
\defutfmathch@r{22BC}{\barwedge} 
\defutfmathch@r{22BD}{\utfch@rundefined} 
\defutfmathch@r{22BE}{\utfch@rundefined} 
\defutfmathch@r{22BF}{\utfch@rundefined} 
\defutfmathch@r{22C0}{\bigwedge} 
\defutfmathch@r{22C1}{\bigvee} 
\defutfmathch@r{22C2}{\bigcap} 
\defutfmathch@r{22C3}{\bigcup} 
\defutfmathch@r{22C4}{\utfch@rundefined} 
\defutfmathch@r{22C5}{\cdot} 
\defutfmathch@r{22C6}{\utfch@rundefined} 
\defutfmathch@r{22C7}{\utfch@rundefined} 
\defutfmathch@r{22C8}{\bowtie} 
\defutfmathch@r{22C9}{\utfch@rundefined} 
\defutfmathch@r{22CA}{\utfch@rundefined} 
\defutfmathch@r{22CB}{\utfch@rundefined} 
\defutfmathch@r{22CC}{\utfch@rundefined} 
\defutfmathch@r{22CD}{\utfch@rundefined} 
\defutfmathch@r{22CE}{\utfch@rundefined} 
\defutfmathch@r{22CF}{\utfch@rundefined} 
\defutfmathch@r{22D0}{\Subset} 
\defutfmathch@r{22D1}{\Supset} 
\defutfmathch@r{22D2}{\Cap} 
\defutfmathch@r{22D3}{\Cup} 
\defutfmathch@r{22D4}{\pitchfork} 
\defutfmathch@r{22D5}{\utfch@rundefined} 
\defutfmathch@r{22D6}{\lessdot} 
\defutfmathch@r{22D7}{\gtrdot} 
\defutfmathch@r{22D8}{\lll} 
\defutfmathch@r{22D9}{\ggg} 
\defutfmathch@r{22DA}{\lesseqgtr} 
\defutfmathch@r{22DB}{\gtreqless} 
\defutfmathch@r{22DC}{\utfch@rundefined} 
\defutfmathch@r{22DD}{\utfch@rundefined} 
\defutfmathch@r{22DE}{\utfch@rundefined} 
\defutfmathch@r{22DF}{\utfch@rundefined} 
\defutfmathch@r{22E0}{\utfch@rundefined} 
\defutfmathch@r{22E1}{\utfch@rundefined} 
\defutfmathch@r{22E2}{\utfch@rundefined} 
\defutfmathch@r{22E3}{\utfch@rundefined} 
\defutfmathch@r{22E4}{\utfch@rundefined} 
\defutfmathch@r{22E5}{\utfch@rundefined} 
\defutfmathch@r{22E6}{\utfch@rundefined} 
\defutfmathch@r{22E7}{\utfch@rundefined} 
\defutfmathch@r{22E8}{\utfch@rundefined} 
\defutfmathch@r{22E9}{\utfch@rundefined} 
\defutfmathch@r{22EA}{\not\lhd} 
\defutfmathch@r{22EB}{\utfch@rundefined} 
\defutfmathch@r{22EC}{\utfch@rundefined} 
\defutfmathch@r{22ED}{\utfch@rundefined} 
\defutfmathch@r{22EE}{\vdots} 
\defutfmathch@r{22EF}{\cdots} 
\defutfmathch@r{22F0}{\utfch@rundefined} 
\defutfmathch@r{22F1}{\ddots} 
\defutfmathch@r{22F2}{\utfch@rundefined} 
\defutfmathch@r{22F3}{\utfch@rundefined} 
\defutfmathch@r{22F4}{\utfch@rundefined} 
\defutfmathch@r{22F5}{\utfch@rundefined} 
\defutfmathch@r{22F6}{\utfch@rundefined} 
\defutfmathch@r{22F7}{\utfch@rundefined} 
\defutfmathch@r{22F8}{\utfch@rundefined} 
\defutfmathch@r{22F9}{\utfch@rundefined} 
\defutfmathch@r{22FA}{\utfch@rundefined} 
\defutfmathch@r{22FB}{\utfch@rundefined} 
\defutfmathch@r{22FC}{\utfch@rundefined} 
\defutfmathch@r{22FD}{\utfch@rundefined} 
\defutfmathch@r{22FE}{\utfch@rundefined} 
\defutfmathch@r{22FF}{\utfch@rundefined} 
\makeatother

\def\Z{{\mathbb Z}}

\def\R{{\mathbb R}}

\def\set#1{\left\{#1\right\}}

\def\abs#1{\left|#1\right|}

\def\ceil#1{\left\lceil#1\right\rceil}

\newcommand{\eg}{e.g\., \ignorespaces}
\newcommand{\ie}{i.e\., \ignorespaces}

\newcounter{todo}
\makeatletter

\newcommand\listtodoname{TODO seznam}
\newcommand\listoftodos{%
  \ifnum\thetodo=0\else%
  \section*{\listtodoname}\@starttoc{tod}%
  \addcontentsline{toc}{section}{\listtodoname}%
  \fi}%

\usepackage[listformat=simple]{caption}

\def\ext@table{lof}
\let\c@table\c@figure
\let\ftype@table\ftype@figure

\usepackage{calc}

\def\doi#1{doi: \href{http://dx.doi.org/#1}{\nolinkurl{#1}}}

\def\Caption#1#2{\caption[#1]{#1 #2}}

\def\tabname{\kern -0.5pt T\kern-1.2pt a\kern -0.5pt b\.\kern-1pt}
\DeclareCaptionListFormat{fig}{\hbox to 0pt{\if\IsAlg TAlg\else Fig\fi\.\hss}\hphantom{\tabname} #2}
\DeclareCaptionListFormat{tab}{\hbox to 0pt{\tabname\hss}\hphantom{\tabname} #2}

\def\IsAlg{F}
\let\realalgorithm\algorithm
\def\algorithm{
  \def\IsAlg{T}%
  \def\figurename{Algorithm}%
  \realalgorithm}

\let\realtable\table
\def\table{\captionsetup{listformat=tab}\realtable}

\let\realfigure\figure
\def\figure{\captionsetup{listformat=fig}\realfigure}

\def\l@figure{\@dottedtocline{1}{0em}{4.2em}}
\let\l@table\l@figure

\makeatother

\let\realfootnoterule\footnoterule
\def\footnoterule{\vfill\realfootnoterule}

\usepackage{pgfplots}
\usepackage[colorlinks=true]{hyperref}
\def\.{\mbox{.}}

\def\Boundary#1#2{\ensuremath{\left( #1 \,|\, #2 \right\}}}
\def\J{\ensuremath{\mathcal J}}

\def\1{{\bf 1}}

\def\Par{\nu}
\def\kgadgets#1{{\ensuremath{\mathcal G^{#1}}}}

\def\Experiment#1{%
  \href{https://gitlab.kam.mff.cuni.cz/radek/cdc-counting/-/blob/master/experiments/#1}{%
  \nolinkurl{#1}}}

\hypersetup{pdftitle=Counting Circuit Double Covers}
\hypersetup{pdfauthor={Radek Hušek, Robert Šámal}}

\begin{document}
\title{Counting Circuit Double Covers}
\author{
Radek Hušek\thanks{
\texttt{radek.husek@fit.cvut.cz}, Faculty of Information Technology, Czech Technical University in Prague,
Czech Republic
} \and Robert Šámal\thanks{
\texttt{samal@iuuk.mff.cuni.cz},
Computer Science Institute of Charles University,
Faculty of Mathematics and Physics, Charles University,
Prague, Czech Republic
}
}
\date{}
\maketitle

\begin{abstract}
We study a counting version of Cycle Double Cover Conjecture. We discuss
why it is more interesting to count circuits (\ie graphs isomorphic to
$C_k$ for some~$k$) instead of cycles (graphs with all degrees even).
We give an almost-exponential lower-bound for graphs with a surface
embedding of representativity at least 4.
We also prove an exponential lower-bound
for planar graphs. We conjecture that any bridgeless cubic graph
has at least $2^{n/2 - 1}$ circuit double covers and we show an infinite
class of graphs for which this bound is tight.

\medskip\noindent
Keywords:
  cycle double cover;
  planar graphs;
  snarks
\end{abstract}

Several recent results and conjectures study counting versions of classical
existence statements.
Esperet et al\. \cite{EKKKN-expmatch} proved Lov\'asz--Plummer conjecture:
\begin{theorem}[Esperet et al\.~\cite{EKKKN-expmatch}]
Every bridgeless cubic graph has exponentially many perfect matchings.
\end{theorem}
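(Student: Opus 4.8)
The plan is to study the number of perfect matchings through the geometry of the \emph{perfect matching polytope} together with the brick/brace (tight-cut) decomposition, and then to amplify a linear estimate into an exponential one by induction. First I would record the structural prerequisites: a bridgeless cubic graph $G$ is matching covered. By Petersen's theorem it has a $2$-factor and hence a perfect matching, and in fact every edge lies in some perfect matching, so $G$ is $1$-extendable and the polytope and decomposition machinery apply. By Edmonds' perfect matching polytope theorem, the uniform vector $w=(\tfrac13,\dots,\tfrac13)\in\R^{E(G)}$ lies in the perfect matching polytope $\mathrm{PM}(G)$: it satisfies every degree equation (each vertex carries total weight $1$) and every odd-cut inequality (a bridgeless cubic graph has no odd cut of size $1$ or $2$, so each odd cut carries weight at least $1$). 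Hence $w$ is a convex combination of perfect matchings, and the vertices of $\mathrm{PM}(G)$ are exactly the perfect matchings of $G$.

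The first, naive estimate comes from the fact that a $d$-dimensional polytope has at least $d+1$ vertices, so $G$ has at least $\dim\mathrm{PM}(G)+1$ perfect matchings. By the Edmonds--Lov\'asz--Pulleyblank formula, $\dim\mathrm{PM}(G)=|E(G)|-|V(G)|+2-b(G)$, where $b(G)$ is the number of bricks in a tight-cut decomposition; for cubic $G$ this is $n/2+2-b(G)$. This is only a \emph{linear} lower bound, and it is useless precisely when $G$ is essentially a single large brick, so the dimension argument alone can never reach $2^{\Omega(n)}$.

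The exponential upgrade, which is the heart of the matter, I would obtain by induction. The basic recursion is exact: fixing an edge $e=uv$, every perfect matching either avoids $e$ or uses it, so $m(G)=m(G-e)+m(G-u-v)$. To turn this into growth one must choose $e$ so that both reduced graphs stay inside a class on which the inductive hypothesis can be reapplied, which is where the tight-cut decomposition enters: the perfect-matching count can be controlled across a tight cut, reducing the whole problem to the indecomposable pieces, namely the bricks and the braces. Braces are bipartite and yield to a direct count; for bricks I would induct on $|V|$ using a brick-reduction theorem in the spirit of de Carvalho--Lucchesi--Murty, in which every brick outside a finite exceptional list carries a removable (``thin'') edge whose deletion followed by bicontraction produces a smaller brick, so that the recursion relates $m(G)$ to the count of a brick on boundedly fewer vertices and pushes a bound of the form $m(H)\ge 2^{|V(H)|/c}$ through each step.

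The main obstacle is exactly this inductive step on bricks. The linear dimension bound never improves on its own, so the exponential behaviour must be manufactured by combining the multiplicative gain across many tight cuts with a recursion \emph{inside} a single large brick, and one must prove that this recursion never stalls: after each thin-edge removal and bicontraction, \emph{both} the edge-using and the edge-avoiding matching families have to remain large, the bounded vertex loss per step must keep the exponent constant $c$ strictly positive, and the finitely many exceptional bricks (above all the Petersen graph) must be checked by hand. Guaranteeing all of this simultaneously is where essentially all of the difficulty of the Esperet et al.\ argument concentrates.
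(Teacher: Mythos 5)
The first thing to note is that the paper does not prove this statement at all: it is imported verbatim from Esperet et al.\ and cited purely as motivation for the counting question the paper actually studies (circuit double covers of cubic graphs). So there is no in-paper proof to compare your attempt against; the only question is whether your outline would stand on its own as a proof of the Lov\'asz--Plummer conjecture.

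It would not, because the exponential gain is never actually established. The ingredients you assemble are real and correctly stated: the uniform vector $\tfrac13\mathbf{1}$ does lie in the perfect matching polytope of a bridgeless cubic graph, the Edmonds--Lov\'asz--Pulleyblank dimension formula does give only a linear lower bound, and the tight-cut decomposition does reduce the problem to bricks and braces. But at the decisive point you propose to push a bound $m(H)\ge 2^{|V(H)|/c}$ through a thin-edge recursion in which ``both the edge-using and the edge-avoiding matching families have to remain large,'' and you then remark that this is where all of the difficulty concentrates. That remark is accurate, and it is precisely the statement you have not proved. The recursion $m(G)=m(G-e)+m(G-u-v)$ immediately leaves the class of bridgeless cubic graphs ($G-e$ has two vertices of degree two, and the reduced graphs need not be bridgeless or $1$-extendable), the removable-edge theorems for bricks give no quantitative control over how many perfect matchings survive each reduction, and nothing in the outline excludes the recursion stalling or losing a constant factor at every step. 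The published argument is long and delicate exactly because it must manufacture a multiplicative gain inside a single large brick; a sketch that names the framework and defers that lemma is a plan for a proof, not a proof.
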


A similar result for colorings of planar graphs was proven by Thomassen~\cite{Thom-exp5-col}:

\begin{theorem}[Thomassen \cite{Thom-exp5-col}]
Every planar graph has exponentially many (list) 5-vertex-colorings.
\end{theorem}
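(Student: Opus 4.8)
The plan is to strengthen Thomassen's inductive proof that every planar graph is $5$-list-colorable so that it counts colorings rather than merely producing one. Throughout, $L$ denotes an assignment of lists of size at least $5$ and we write $N(G,L)$ for the number of proper $L$-colorings. The first reduction is monotonicity: if $H \supseteq G$ is obtained by adding edges (inside a fixed plane embedding), then every proper $L$-coloring of $H$ restricts to one of $G$, so $N(G,L) \ge N(H,L)$. Consequently it suffices to prove an exponential lower bound for plane triangulations, i.e.\ for near-triangulations whose outer face is a cycle and all of whose inner faces are triangles --- these are the edge-maximal, hence coloring-minimal, instances.

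As in Thomassen's argument I would prove a stronger statement by induction on $|V(G)|$. Let $G$ be a near-triangulation with outer cycle $C = v_1 v_2 \cdots v_p$; assume $v_1 v_2 \in E(G)$ is precolored with two distinct colors, every other vertex of $C$ has a list of size at least $3$, and every interior vertex has a list of size at least $5$. The claim is that the number of colorings extending the precoloring is at least $2^{\alpha m}$, where $m$ is the number of interior vertices and $\alpha > 0$ is an absolute constant. There are two cases. If $C$ has a chord $v_i v_j$, it separates $G$ into two near-triangulations glued along $v_i v_j$; a coloring of the whole is a pair of colorings of the parts agreeing on $v_i v_j$, so the counts multiply and the exponents add, closing this case. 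If $C$ is chordless, look at the fan $v_1, w_1, \dots, w_t, v_{p-1}$ of neighbors of $v_p$; delete $v_p$, delete two fixed colors $a,b \in L(v_p) \setminus \{c(v_1)\}$ from each list $L(w_i)$ (which stays of size at least $3$), and recurse on the smaller near-triangulation $G' = G - v_p$ with new outer cycle $v_1 w_1 \cdots w_t v_{p-1} \cdots v_2$.

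The growth comes from the chordless step, but \emph{not} simply as ``two choices for $v_p$.'' Writing the count of $G$ as a sum over colorings $\varphi$ of $G'$, each $\varphi$ extends to $v_p$ in at least one way and in at least two ways precisely when $\varphi(v_{p-1}) \notin \{a,b\}$, since all the $w_i$ and $v_1$ already avoid $a$ and $b$. Thus a genuine factor of $2$ is guaranteed whenever the recursion reaches a configuration forcing $v_{p-1}$ off the colors $\{a,b\}$; the content of the proof is to show such doublings recur at a bounded rate --- once every constantly-many peeled vertices --- and to charge each one to those vertices, which is exactly what produces a bound of the form $2^{\Omega(n)}$ (Thomassen's careful charging gives $2^{n/9}$).

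I expect the \textbf{exponent bookkeeping} to be the main obstacle: one must pick the invariant (here $2^{\alpha m}$, but in fact a more refined quantity) so that it is simultaneously preserved by the multiplicative chord split --- where shared boundary vertices must not be counted twice --- and by the chordless peel --- where most individual steps only guarantee $N(G,L) \ge N(G',L')$ and the crucial doubling must be shown to be unavoidable within a bounded window. Calibrating $\alpha$ against the worst local configuration (a short fan with lists of exactly the minimum sizes) and verifying both reductions respect it is the technical heart. Finally, the theorem follows from the lemma by triangulating the given graph so its outer face is a triangle $v_1 v_2 v_3$, precoloring the edge $v_1 v_2$ (possible since $|L(v_1)|,|L(v_2)| \ge 5$), and invoking the lemma to count the $2^{\Omega(n)}$ extensions.
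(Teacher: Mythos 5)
This statement is quoted in the paper purely as background, with a citation to Thomassen~\cite{Thom-exp5-col}; the paper contains no proof of it, so there is no internal argument to compare yours against. Judged on its own, your sketch correctly reproduces the strategy of Thomassen's actual proof: strengthen the inductive proof of 5-list-colorability of near-triangulations (precolored edge $v_1v_2$, lists of size at least $3$ on the outer cycle, at least $5$ inside), split multiplicatively on a chord, and otherwise peel $v_p$ after deleting two colors $a,b\in L(v_p)\setminus\{c(v_1)\}$ from the lists of its interior neighbours, observing that a coloring of $G-v_p$ extends in two ways exactly when $v_{p-1}$ avoids $\{a,b\}$.

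The gap is that the invariant you propose, $2^{\alpha m}$ with $m$ the number of interior vertices, cannot close the induction, and this is not mere bookkeeping. In the chordless step the deletion of $v_p$ promotes the entire fan $w_1,\dots,w_t$ from interior to boundary, so $m$ drops by $t$, which is unbounded, while the step is only guaranteed to preserve the count ($N(G,L)\ge N(G',L')$) and doubles it only on those colorings with $\varphi(v_{p-1})\notin\{a,b\}$ --- possibly none of them. The inductive inequality $2^{\alpha m}\ge 2\cdot 2^{\alpha(m-t)}$ you would need is therefore false for large $t$, and even the factor $2$ is not available at every step. You flag this yourself (``a more refined quantity'', ``doublings recur at a bounded rate''), but that refined potential function --- one that discounts boundary vertices appropriately, remains additive under the chord split without double-counting $v_i,v_j$, and still forces a doubling within a bounded window --- together with the verification against the worst local configurations, \emph{is} the content of Thomassen's theorem rather than a technicality appended to it. As written, your proposal is a correct plan whose central quantitative claim is asserted, not proved.
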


Thomassen~\cite{Thom-exp3} also conjectured existence of exponentially many 3-vertex-colorings
of triangle-free planar graphs. He gave a subexponential bound that was
later improved by Asadi et al.~\cite{ADPT-exp3}.
However, the conjecture stays open.
By duality, these results and conjecture can be equivalently stated for the
number of nowhere-zero
$\Z_5$-flows (or $\Z_3$-flows) of planar (4-edge-connected) graphs.
Dvořák, Mohar and the second author~\cite{DMS19} extended this to non-planar graphs:

\begin{theorem}[Dvořák, Mohar and Šámal~\cite{DMS19}]
Every 3-edge-connected graph has exponentially many $\Z_2 \times \Z_3$-flows, 
every 4-edge-connected graph has exponentially many $\Z_2 \times \Z_2$-flows and 
every 6-edge-connected graph has exponentially many $\Z_3$-flows. 
\end{theorem}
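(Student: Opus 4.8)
The plan is to treat the three assertions uniformly: in each case obtain one nowhere-zero flow from the corresponding existence theorem, and then perturb it in $\Omega(n)$ mutually independent ways. First I would record Tutte's theorem that the number of nowhere-zero $A$-flows of a graph depends only on $\abs{A}$, so the three claims reduce to exhibiting exponentially many nowhere-zero $6$-, $4$-, and $3$-flows under the stated edge-connectivity. The base flows come from Seymour's $6$-flow theorem (where bridgelessness already suffices), from Jaeger's theorem together with the Nash--Williams--Tutte characterisation (a $4$-edge-connected graph has two edge-disjoint spanning trees, hence a nowhere-zero $4$-flow), and from the weak $3$-flow theorem of Lov\'asz, Thomassen, Wu and Zhang for $6$-edge-connected graphs. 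Note that high connectivity makes the cycle space large: $4$-edge-connectivity forces $m\ge 2n$ and $6$-edge-connectivity forces $m\ge 3n$, so $\dim\mathcal C(G)=m-n+1=\Omega(n)$ in all three cases.

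The engine for ``exponentially many'' is a supply of local flips: writing a flow as a fixed base flow plus an element of the cycle space, I would seek a subgraph $H\subseteq G$ with $\dim\mathcal C(H)=\Omega(n)$ on which \emph{every} cycle-space perturbation keeps the flow nowhere-zero, so that the $2^{\dim\mathcal C(H)}$ perturbations produce pairwise distinct flows. The two cases carrying a $\Z_2$ coordinate are the tractable ones, because that coordinate provides slack. A nowhere-zero $\Z_2\times\Z_2$-flow is exactly an ordered pair $(B_1,B_2)$ of even subgraphs with $B_1\cup B_2=E(G)$, since an edge receives value $(0,0)$ precisely when it lies in neither support. Given such a covering pair, replacing $B_1$ by $B_1\triangle X$ for $X\in\mathcal C(G)$ preserves the covering property as long as $X$ avoids the edges of $E\setminus B_2$ (these are forced to lie in $B_1$, and toggling them would uncover them); the admissible $X$ are exactly the cycle space of the subgraph $(V,B_2)$. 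So the task becomes choosing the base cover so that $(V,B_2)$ has cycle space of dimension $\Omega(n)$ — and the two edge-disjoint spanning trees are meant to provide precisely this room. The $6$-flow case is handled the same way, the $\Z_2$ factor again furnishing a free coordinate.

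The genuinely hard case is the $\Z_3$-flow under $6$-edge-connectivity, and I expect it to be the main obstacle. Here there is no redundant coordinate: every edge carries a value in $\{1,2\}$ and is ``tight'', so adding a circuit flow along an oriented circuit $C$ turns some $2$ into a $0$ wherever $C$ runs against the current flow. A safe flip therefore needs a circuit all of whose edges carry the value that the flip increments, and to reach $2^{\Omega(n)}$ flows one needs $\Omega(n)$ such circuits that are pairwise edge-disjoint (so the flips commute and compose freely) and individually toggleable. Producing them is exactly where the fine structure of the Lov\'asz--Thomassen--Wu--Zhang argument must be exploited: I would run their inductive construction and, at the linearly many steps where a low-degree vertex or a contractible configuration is processed, extract an independent binary choice, then argue that these choices survive as edge-disjoint safe flips of the final flow. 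Even in the friendly cases the analogous subtlety appears — in a tight $4$-regular instance the cycle space of $(V,B_2)$ can collapse (e.g. if $B_2$ is a single Hamilton cycle), so one must choose the base cover, and possibly perturb both coordinates, to guarantee a linear-dimensional safe subspace.

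Finally I would check distinctness, which is routine: flips confined to a fixed subgraph and indexed by linearly independent cycle-space elements give pairwise distinct flows, since two such flows differ on the support of the symmetric difference of their perturbations. Combining the $\Omega(n)$ independent choices then yields $2^{\Omega(n)}$ nowhere-zero flows in each case, the desired exponential bound; the crux throughout is manufacturing linearly many genuinely independent safe flips, hardest for $\Z_3$ where the existence proof itself must be mined for the binary choices.
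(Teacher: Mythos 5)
First, a caveat: the paper does not prove this theorem --- it is quoted from Dvo\v{r}\'ak, Mohar and \v{S}\'amal \cite{DMS19} as background motivation --- so there is no in-paper proof to compare against, and I can only assess your sketch on its own terms. The skeleton is reasonable: Tutte's reduction from $A$-flows to $\abs{A}$-flows, the three base existence theorems (Seymour, Jaeger via Nash--Williams--Tutte, Lov\'asz--Thomassen--Wu--Zhang), and the strategy of perturbing a base flow by $\Omega(n)$ independent ``safe'' cycle-space elements, with the routine distinctness check at the end. You also correctly locate where the difficulty sits.

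The problem is that in all three cases the step that actually manufactures linearly many independent safe flips is asserted rather than proved, and it is precisely the step that can fail. For $\Z_2\times\Z_2$: Jaeger's cover $(C_1,C_2)$ built from edge-disjoint spanning trees $T_1,T_2$ only guarantees $C_2\supseteq E\setminus T_2$, so the admissible perturbations of $C_1$ form the cycle space of $(V,C_2)$, whose dimension is only bounded below by $\abs{C_2}-n+1\ge m-2n+2$; for a $4$-regular $4$-edge-connected graph ($m=2n$) this is a constant, not $\Omega(n)$. You notice this collapse yourself but offer no repair beyond ``choose the base cover better,'' and it is not at all clear that a suitable cover exists without a substantive new argument --- this is exactly where \cite{DMS19} does real work. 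The $\Z_2\times\Z_3$ case has the same unaddressed issue: Seymour's construction pins the $\Z_2$ coordinate on the zero set of the $\Z_3$ part, and nothing in your sketch shows that the complementary subgraph has cycle space of dimension $\Omega(n)$. For $\Z_3$ you propose to ``mine'' the LTWZ induction for linearly many independent binary choices; that is a research programme, not an argument --- you specify neither what the binary choice at each step is, nor why the resulting flips are edge-disjoint, nor why they remain safe in the final flow. So while the obstacles are correctly identified, none of the three statements is actually proved by the proposal.
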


We ask the same question for circuit double covers of cubic graphs
(see Definitions~\ref{def:circuit} and~\ref{def:cdc} below). This is
motivated by Cycle Double Cover conjecture:

\begin{conjecture}[Szekeres '73 \cite{sze73}, Seymour '79 \cite{sey79}]\label{con:cdc}
  Every bridgeless graph has a cycle double cover.
\end{conjecture}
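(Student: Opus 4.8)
The plan is to confront this as the long-standing open problem it actually is: rather than a complete argument I describe the reduction strategy by which a proof would most plausibly proceed and the point at which every known approach stalls.

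First I would reduce to cubic graphs. A vertex of degree $2$ can be suppressed (its two incident edges merged into one), and this does not affect the existence of a cycle double cover, since a cover of the suppressed graph lifts back by subdividing the relevant element. A vertex of degree $d \geq 4$ can be expanded into a small bridgeless cubic gadget (replacing the vertex by a cycle or a tree of degree-$3$ vertices) so that any cover of the expanded graph projects to a cover of the original; bridgelessness is preserved throughout. Hence it suffices to prove the conjecture for bridgeless cubic graphs.

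Second, I would dispatch the $3$-edge-colorable case directly. If a cubic graph is properly $3$-edge-colored with colors $\{1,2,3\}$, then for each pair of colors the union of the two corresponding matchings is a $2$-regular spanning subgraph, \ie a disjoint union of circuits; the three such subgraphs $F_{12}, F_{13}, F_{23}$ cover each edge exactly twice, since an edge of color $c$ lies in exactly the two subgraphs indexed by pairs containing $c$. The same elementary idea covers any graph admitting a nowhere-zero $4$-flow: such a flow, viewed as a nowhere-zero $\Z_2 \times \Z_2$-flow, exhibits $E(G)$ as a union of two even subgraphs $C_1, C_2$, and then $\{C_1, C_2, C_1 \triangle C_2\}$ is a cycle double cover. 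By the Four Color Theorem every planar graph has such a flow, which (together with the face boundaries of a planar embedding) explains why the planar case is easy.

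Third, I would pass to a hypothetical minimal counterexample and extract structure: standard arguments force it to be a cyclically $4$-edge-connected cubic graph of girth at least $5$ with no nowhere-zero $4$-flow, that is, a nontrivial snark. An appealing reformulation is through embeddings: a $2$-cell embedding of $G$ in some closed surface in which every face boundary is a circuit yields a cycle double cover by taking the collection of face boundaries, so it would suffice to show that every bridgeless cubic graph admits such a \emph{circular} (or strong) embedding.

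The main obstacle is precisely this snark core, and it is the entire difficulty. The reductions above are routine and collapse the conjecture onto the nontrivial snarks, but no general technique is known to produce a cycle double cover (equivalently, a circular embedding) for an arbitrary snark, and the Petersen graph already shows that naive greedy and flow-based constructions fail. This is why the conjecture has remained open since the 1970s, and it is exactly this gap that motivates the present paper's shift toward \emph{counting} covers under additional hypotheses — surface embeddings of large representativity, or planarity — rather than attacking existence for all bridgeless graphs.
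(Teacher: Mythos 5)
You have correctly recognized that this statement is not a theorem of the paper at all: it is the Cycle Double Cover Conjecture (Szekeres 1973, Seymour 1979), stated in the paper purely as motivation, and the paper offers no proof of it because none is known. Your refusal to fabricate an argument is the right call, and the reductions you sketch are all standard and sound: suppressing degree-$2$ vertices and expanding vertices of degree $\geq 4$ into circuits of degree-$3$ vertices reduces the problem to bridgeless cubic graphs (a cover of the expanded graph contracts to a collection of even closed walks covering each original edge twice, which then decomposes into circuits); the $3$-edge-colorable case follows from taking the three pairwise unions of colour classes; more generally a nowhere-zero $\Z_2 \times \Z_2$-flow writes $E(G)$ as $C_1 \cup C_2$ for even subgraphs $C_1, C_2$, and $\{C_1, C_2, C_1 \triangle C_2\}$ is a cycle double cover; and the minimal-counterexample analysis collapses the conjecture onto cyclically $4$-edge-connected snarks of girth at least $5$. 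The reformulation via circular (strong) embeddings is likewise the standard bridge to the embedding-based viewpoint that this paper exploits in its flower construction.

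The one thing to be clear about is that what you have written is a survey of why the conjecture is hard, not a proof, and the gap you name --- producing a cycle double cover for an arbitrary nontrivial snark --- is precisely the open problem. Since the paper itself proves nothing here (it proceeds instead to \emph{count} circuit double covers for planar graphs and for graphs with representativity-$4$ embeddings, where existence is not in doubt), there is no proof of the paper's to compare yours against; your answer is as complete as an honest one can be.
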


We give an exponential
bound for planar graphs (Theorem~\ref{thm:planar-exp-lb}),
almost exponential bound for graphs on any other
fixed surface (Corollary~\ref{cor:surface-lb}) and we present a strengthening
of Conjecture~\ref{con:cdc} which is motivated by our results in
Section~\ref{sec:lin-rep}
(note that Corollary~\ref{cor:triangle} shows this conjecture is tight for
Klee graphs):
\begin{conjecture}\label{con:cdc-count}
  Every bridgeless cubic graph with $n$ vertices has at least $2^{n/2 - 1}$ circuit
  double covers.
\end{conjecture}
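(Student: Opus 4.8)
The plan is to attack Conjecture~\ref{con:cdc-count} by induction on $n$, using reductions that delete two vertices while at least \emph{doubling} the number of circuit double covers. This is precisely the growth required, since $2^{n/2-1} = 2 \cdot 2^{(n-2)/2 - 1}$, and the tightness on Klee graphs recorded in Corollary~\ref{cor:triangle} shows there is no slack: each reduction must contribute a factor of \emph{exactly} two on the extremal graphs. I would first dispose of low connectivity. If $G$ has a $2$-edge-cut or a nontrivial $3$-edge-cut, one splits $G$ into smaller bridgeless cubic graphs and argues that circuit double covers multiply across the cut (the standard gluing used toward Conjecture~\ref{con:cdc}), reducing to the cyclically $4$-edge-connected case. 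The normalising ``$-1$'' in the exponent is chosen to be compatible with this splitting: replacing a $3$-edge-cut by a new vertex on each side produces pieces with $n_A + n_B = n+2$, and $2^{n_A/2-1} \cdot 2^{n_B/2-1} = 2^{n/2-1}$.

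The first genuine reduction is the triangle, which also supplies the base case and the extremal behaviour. If $G$ contains a triangle $T$, let $G'$ be the cubic graph obtained by contracting $T$ to a single vertex (the inverse of the truncation that builds Klee graphs). I would show that every circuit double cover of $G'$ extends to $G$ in at least two ways. The key local fact is that at any cubic vertex a circuit double cover forces the three incident edges to be paired by the three ``straight-through'' transitions $\{e_1e_2, e_2e_3, e_1e_3\}$: since each circuit is a simple cycle it passes through the vertex at most once, so there are exactly three passages, and the only $2$-regular pairing of three edges is this one. When the contracted vertex is blown back up into $T$, these three forced transitions can be realised either by routing the passages directly across $T$, or by \emph{inserting the triangle $T$ itself as an additional circuit} and rerouting. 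Verifying that both expansions always yield valid, distinct covers, and that on extremal graphs this is the whole story so that the factor is exactly two, is the transition-system bookkeeping that pins down the Klee case.

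The main obstacle is the \emph{triangle-free} case, and in particular snarks such as the Petersen graph, where no short cycle is available to contract and hence no obvious two-vertex reduction exists. Since there are snarks of arbitrarily large girth, no bounded \emph{local} configuration is guaranteed, so a purely local doubling argument seems unavailable; one would have to exhibit some other reducible structure (a short face-to-face path, a prescribed $2$-factor component) that can always be found in a cyclically $4$-edge-connected cubic graph and that still multiplies the count by two. The route I find more promising is to abandon the inductive count in favour of the linear-algebraic description developed in Section~\ref{sec:lin-rep}: encode circuit double covers as the admissible common solutions of a system over $\Z_2$ and bound their number below by $2^{n/2-1}$ via a rank argument. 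This is at least dimensionally plausible, because the cycle space of a connected cubic graph has dimension $n/2+1$, so the target exponent sits only two below full rank. The crux, and the reason the statement is posed as a conjecture rather than a theorem, is that every route must control the count \emph{uniformly} over all snarks with no extremal slack, and neither the planar bound of Theorem~\ref{thm:planar-exp-lb} nor the exponentially-many-perfect-matchings machinery is sharp enough to deliver the precise per-vertex constant $2^{1/2}$ that Conjecture~\ref{con:cdc-count} demands.
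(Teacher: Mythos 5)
You have not proved the statement, and you should be aware that neither does the paper: this is Conjecture~\ref{con:cdc-count}, posed as an open problem, and the paper only supplies evidence for it (tightness on Klee graphs, computational verification up to 20 vertices, and the weaker planar bound of Theorem~\ref{thm:planar-exp-lb}). Your outline is an honest reconstruction of exactly the partial progress the paper records. The cut reductions you describe are Observations~\ref{obs:cdc_2cut} and~\ref{obs:cdc_3cut} (note the 2-cut case contributes an extra factor of 2 from the gluing choice, which is what makes the exponent arithmetic work there, since $n_A+n_B=n$ rather than $n+2$); the triangle reduction with factor exactly 2 is Observation~\ref{obs:triange_to_vertex} and yields Corollary~\ref{cor:triangle}; and your list of what a minimal counterexample must avoid essentially matches Corollary~\ref{cor:fin}, which additionally rules out 4-cycles via Theorem~\ref{thm:4cyc} (a reduction you did not find, and which removes four vertices for a factor of 4, so it is also exactly tight against the conjectured rate).

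The genuine gap is the one you yourself name: the triangle-free (indeed $\{C_3,C_4\}$-free), cyclically 4-edge-connected case, where no bounded local configuration is guaranteed and no known reduction achieves the required per-reduction factor. Your proposed escape route via a $\Z_2$ rank argument does not work as stated: circuit double covers are not the solution set of a linear system over $\Z_2$ (they are not closed under symmetric difference, and the circuit/cycle distinction is not linear), so the dimension count $n/2+1$ for the cycle space gives no lower bound on their number. The paper's linear representation in Section~\ref{sec:lin-rep} is a different object entirely, a real-valued multiplicity vector indexed by boundary types used with linear programming, and even that machinery only delivers $(5/2)^{n/4-1/2}\approx 2^{0.33 n}$ for planar graphs, short of the conjectured $2^{n/2-1}$. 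So your proposal correctly maps the terrain but, like the paper, stops at the conjecture rather than proving it.
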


The structure of this paper is the following: 
We first discuss why we chose circuit double covers (or CiDCs for short)
over cycle double covers (CyDCs for short).
Then we show a construction that gives many CiDCs for graphs with a surface
embedding of representativity at least 4.
This gives an almost exponential bound but we observe that this
technique cannot be applied to all graph sequences -- in particular it cannot
be applied to Flower snarks.

In Section~\ref{sec:lin-rep} we present a very condensed version of the linear representation
of circuit double covers developed in the PhD thesis of the first author~\cite{radek-phd}
and prove enough about it to make this paper self-contained.
Using this framework we prove that planar graphs have exponentially many CiDCs.


\section{Circuit vs\. Cycle}

The existential questions usually ask for a cycle double cover (and more
strongly for 5-CyDC). Obviously a CyDC exists if and only if a CiDC exists
and asking for CyDC gives an opportunity to restrict the number of cycles
used.
The main reason
we count CiDCs instead is that a single CiDC
corresponds to multiple CyDCs and this number heavily depends on
the allowed number of cycles. This is nicely illustrated by
Observation~\ref{obs:cdc_blowup}.

\begin{definition}[Circuit and Cycle]\label{def:circuit}
  A {\em circuit} is a connected 2-regular subgraph
  (\ie a subgraph isomorphic to $C_k$ for some~$k$).
  A {\em cycle} is a subgraph with all degrees even (\ie a union
  of edge-disjoint circuits).
\end{definition}

\begin{definition}[Double Cover]\label{def:cdc}
  Let $G$ be a graph.
  A multiset of circuits (cycles, respectively) $\mathcal C$ is
  a {\em circuit} ({\em cycle}, respectively)
  \em{double cover} if every edge of $G$ is contained in exactly two elements of
  $\mathcal C$.
  It is a $k$-cycle double cover if $\abs{\mathcal C} \leq k$.
  We denote $\nu(G)$ the number of circuit double covers of the graph $G$.
\end{definition}

\begin{observation}\label{obs:cdc_blowup}
  The circuit double cover given by an embedding of a graph into the plane
  can be made into $2^{\Omega(n_3)}$ many 5-cycle double covers and
  $2^{\Omega(n_3 \log n_3)}$ cycle double covers just by grouping circuits into
  cycles in different ways where $n_3$ is number of vertices of degree at least 3.
\end{observation}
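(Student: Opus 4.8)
The plan is to read the single facial circuit double cover through the planar dual and turn the counting into a counting of partitions of the face set. Assume $G$ is a $2$-connected plane graph, suppressing degree-$2$ vertices so that the minimum degree is at least $3$; this leaves $n_3$ and all the faces unchanged, and since $G$ is bridgeless there are no vertices of degree $\le 1$. Then every face boundary is a circuit and every edge lies on exactly two faces, so the set $\mathcal F$ of face boundaries is the facial CiDC. Let $G^\ast$ be the planar dual, whose vertices are the faces of $G$ and whose edges record edge-sharing; as $G$ is bridgeless, $G^\ast$ is loopless, and Euler's formula together with $\min\deg G\ge 3$ gives $\abs{V(G^\ast)}=\Omega(n_3)$.

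The key reformulation I would set up is this: merging a set $B$ of face boundaries into the even subgraph $\bigcup_{f\in B}\partial f$ produces a genuine cycle (an edge-disjoint union of circuits) exactly when the faces of $B$ are pairwise non-adjacent in $G^\ast$, i.e. $B$ is independent in $G^\ast$. Moreover, because the two faces bordering any edge are adjacent in $G^\ast$, any partition of $\mathcal F$ into independent sets automatically covers each edge exactly twice. Hence grouping the facial circuits into a cycle double cover with at most $k$ cycles is the same as properly colouring $G^\ast$ with at most $k$ colours, and an unrestricted such cover corresponds to an arbitrary partition of $V(G^\ast)$ into independent sets. For a cubic $G$ two faces share a vertex iff they share an edge, so an independent set of $G^\ast$ consists of pairwise vertex-disjoint circuits; the cycle assigned to a colour class is then the disjoint union of its circuits, whose connected components recover the class. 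This makes the correspondence essentially injective, so distinct colourings (up to permuting colours) give distinct cycle double covers.

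For the $5$-cycle bound I would apply Thomassen's theorem \cite{Thom-exp5-col} to the planar graph $G^\ast$: it has $2^{\Omega(\abs{V(G^\ast)})}=2^{\Omega(n_3)}$ proper $5$-colourings. Dividing by the $5!$ permutations of colours and invoking the injectivity above yields $2^{\Omega(n_3)}$ distinct $5$-cycle double covers. For the unrestricted bound I would instead fix one large independent set $S\subseteq V(G^\ast)$ with $\abs{S}=\Omega(n_3)$ (planar graphs have independence number a constant fraction of their order), keep every face outside $S$ as its own singleton cycle, and let $S$ be partitioned completely arbitrarily, since every subset of $S$ is independent. The number of such partitions is the Bell number $B_{\abs{S}}$, and $\log B_m=\Theta(m\log m)$, so this produces $2^{\Omega(n_3\log n_3)}$ cycle double covers, again pairwise distinct by the component-recovery argument.

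The routine parts are the Euler-formula bookkeeping for $\abs{V(G^\ast)}=\Omega(n_3)$ and the Bell-number estimate. The one genuine point is distinctness: I must ensure that different groupings are not accidentally equal as multisets of cycles. This is exactly where vertex-disjointness of the circuits within a class is used, so the main care is in arguing that an independent set of $G^\ast$ really is a family of vertex-disjoint face boundaries — immediate for cubic $G$, and for the general minimum-degree-$3$ case handled by first thinning $S$ to pairwise vertex-disjoint faces, which costs only a constant factor in $\abs{S}$.
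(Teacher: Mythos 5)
Your proposal is correct, and the general-cover half is essentially the paper's argument in dual language: the paper four-colours the faces, takes the largest colour class $k=\Omega(n_3)$, and counts partitions of it via the Bell number $B_k$, exactly as you do with a large independent set of $G^\ast$. Where you genuinely diverge is the $5$-cycle bound: the paper stays with the single $4$-colouring and obtains $2^{k-1}$ five-cycle double covers by splitting the one large colour class into two cycles in all possible ways (fixing one circuit to kill the swap symmetry), whereas you invoke Thomassen's theorem on exponentially many $5$-colourings of the planar dual and quotient by the $5!$ colour permutations. Both work; the paper's route is more elementary (it needs only one proper face-colouring, so even the easy $6$-colour theorem would suffice), while yours imports a stronger counting theorem but makes the correspondence ``partition of faces into independent sets $\leftrightarrow$ cycle double cover'' fully explicit, which is a cleaner conceptual frame. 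One remark: your insistence on vertex-disjointness within a class (and the thinning step for non-cubic $G$) is more care than is needed for distinctness. Since the facial circuits are pairwise edge-disjoint within an independent set, each merged cycle determines, as an edge set, exactly which facial circuits it contains, so the multiset of cycles already recovers the partition; the paper silently relies on this and only corrects for the unordered-pair symmetry via the ``fix one circuit'' device.
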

\begin{proof}
  Any graph with a vertex of degree 1 does not have any CyDC and subdividing an edge
  (\ie addition of a vertex of degree 2) does not change the number of CyDC so we may assume
  that the graph has minimum degree at least 3. Hence the number of faces is~$\Omega(n)$.

  The faces of bridgeless planar graph can be 4-colored.
  Some color contains at least one quarter of the faces, so 
  if we let $k$ be the number of faces of the largest color class, then 
  $k = \Omega(n)$.
  For a 5-CyDC we split this color class into two
  cycles in $2^{k-1}$ ways (we fix one circuit to be able to distinguish
  the cycles), hence there are $2^{\Omega(n)}$ such 5-CyDCs.
  For a general CyDC we split this color class into any number of cycles which
  can be done in $B_k$ ways.\footnote{
    The Bell number $B_k$ is the number of possible partitions of a set of size
    $k$. The name was introduced by Becker and Riordan~\cite{bell-numbers}.
  }
  As a simple lower bound we can take half of the color class to create cycles
  and divide the rest of the color class among them in an arbitrary fashion
  leading to $\Omega(n)^{\Omega(n)} = 2^{\Omega(n \log n)}$ CyDCs.
  (More precise estimates of $B_k$ are known but they are not relevant here.)
\end{proof}

\begin{figure}[tb]
  \centering
  \includegraphics[width=.35\textwidth]{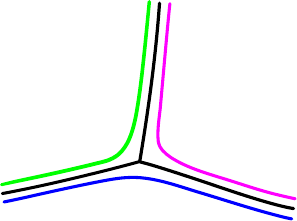}
  \caption{CiDC around a vertex of degree 3}
  \label{fig:cdc-3-vertex}
\end{figure}

\begin{figure}[tb]
  \centering
  \includegraphics[width=.95\textwidth]{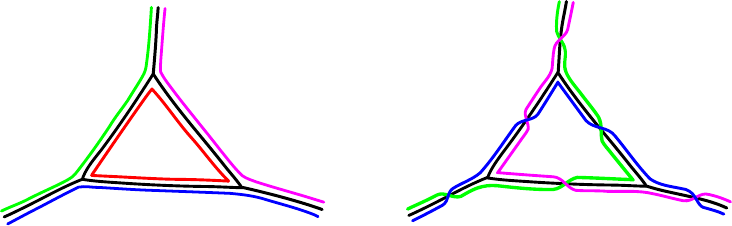}
  \caption{Two possible CiDCs of a triangle gadget}
  \label{fig:cdc-triangle}
\end{figure}


\section{Representation of Circuit Double Covers}\label{sec:cdc-rep}

Circuit double covers are usually represented as (multi)sets of circuits.
Although this is a natural representation, it has several downsides.
The most important is that there is no simple way to do local modifications
in this representation. For an alternative representation we use the fact
that for cubic graphs
any CiDC around any vertex looks the same (Figure~\ref{fig:cdc-3-vertex}).
Because
all the vertices look the same, the only thing that can change is how
are the walks connected to each other at the edges.

There are two ways to join the walks at each edge. To be able to distinguish
these two ways we assume that the graph is drawn into a plane, and then
the two ways correspond to crossing or not crossing the walks at the edge.
This is shown in Figure~\ref{fig:cdc-ex1}. We require that all the vertices are
distinct points, edges do not pass through the vertices and do not touch
but of course they can cross each other. This drawing serves us as a way to fix
the rotation system (the order of edges around each vertex).
(Alternatively we can view each such configuration as a {\em ribbon graph}
-- see, \eg Ellis-Monaghan and Moffatt~\cite{graphs-on-surfaces-2} section 1.1.4.)
Every CiDC of a cubic graph can be represented uniquely in
this way. This leads to the following simple but important observation:

\begin{figure}[tb]
  \centering
  \includegraphics[width=.4\textwidth]{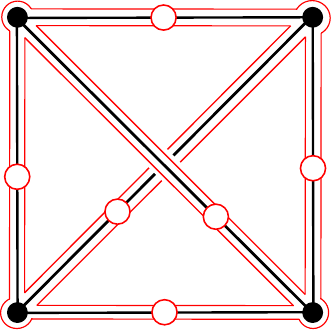}
  \caption{A drawing of $K_4$}
  \label{fig:cdc-ex1}
\end{figure}

\begin{observation}
  A cubic graph with $n$ vertices has at most $2^{3n/2}$ circuit double covers.
\end{observation}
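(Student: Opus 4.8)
The plan is to convert the representation discussion immediately preceding the statement into a direct counting argument. First I would record the elementary fact that a cubic graph on $n$ vertices has exactly $3n/2$ edges: summing degrees gives $\sum_v \deg(v) = 3n = 2\abs{E}$, hence $\abs{E} = 3n/2$ (in particular $n$ must be even).

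Next I would invoke the encoding set up above. Fixing a drawing of $G$ fixes the rotation system, and the local picture of any circuit double cover at each degree-$3$ vertex is forced to be the configuration of Figure~\ref{fig:cdc-3-vertex}, which is the same at every vertex. Consequently a CiDC is completely determined by the data of how the walks are joined at each edge, and at each edge there are exactly two possibilities: the walks either cross or do not cross. This associates to every CiDC a binary label on each of the $3n/2$ edges, i.e\. an element of $\set{0,1}^{E}$.

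The key step is that this association is injective. Two distinct circuit double covers must differ in at least one crossing choice, because the vertex configurations are identical in all cases and the family of closed walks is reconstructed from the crossing pattern alone; this is exactly the uniqueness asserted in the paragraph before the statement, so I would cite it rather than reprove it. Injectivity then yields $\nu(G) \le \abs{\set{0,1}^{E}} = 2^{\abs{E}} = 2^{3n/2}$, which is the claim.

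I do not expect a genuine obstacle here; the only point worth flagging is that the bound is far from tight, since most labelings in $\set{0,1}^{E}$ do not arise from any CiDC at all — a generic crossing pattern produces a collection of closed walks that need not be (simple) circuits and need not cover every edge exactly twice. But the argument only needs the map to be injective, not bijective, so these failures cause no difficulty.
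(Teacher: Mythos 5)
Your argument is correct and is essentially the paper's own proof: both count the $3n/2$ edges of a cubic graph, use the fact that the local picture at every degree-3 vertex is forced, and bound $\nu(G)$ by the $2^{3n/2}$ possible crossing patterns via the injectivity of the encoding described just before the statement. Your write-up is merely more explicit about the injectivity and the non-surjectivity caveat, neither of which changes the substance.
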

\begin{proof}
  A cubic graphs has $3n/2$ edges, there are two ways how to join the walks at every
  edge and every CiDC can be described in this way.
\end{proof}

Another consequence of this representation is the following:
When we take a closed disk for each circuit in the cover 
and identify the disk boundary with the edges of the graph, we get a closed surface. 
Thus, counting CiDCs of a connected cubic graph~$G$ is the same as counting embeddings of~$G$ where every face boundary 
is a circuit. 

Some of our proofs will require cyclically 4-edge-connected graphs.
We recall the definition of cyclic connectivity and a useful lemma about it.
We sometimes omit the word ``edge'' as we do not use vertex connectivity
in this paper.

\begin{definition}[Cyclic Connectivity]\label{def:cyclic-connectivity}
  A graph is {\em cyclically $k$-edge-connected} if every circuit-separating
  edge cut has size at least $k$.
\end{definition}

\begin{observation}\label{obs:cyc-rem2}
  Let $G$ be a cyclically 4-edge-connected cubic graph. Let $G'$ be
  a graph created from $G$ by removing two non-adjacent edges.
  Then $G'$ is 2-edge-connected.
\end{observation}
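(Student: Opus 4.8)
The plan is to prove the equivalent statement that for every partition of the vertex set $V(G') = V(G)$ into two nonempty parts $(S, \bar S)$, at least two edges of $G'$ cross between $S$ and $\bar S$; this simultaneously yields connectivity and the absence of bridges, hence $2$-edge-connectivity. I would write $m$ for the number of edges of $G$ crossing the partition and $c$ for the number of edges of $G'$ crossing it, so that $m = c + d$, where $d \in \{0,1,2\}$ counts how many of the two removed edges cross. The goal then becomes $c \ge 2$. The one combinatorial fact I would isolate first is a degree count valid in any cubic graph: if $\bar S$ induces a forest, then from $3\abs{\bar S} = 2\,e(\bar S) + m$ (sum of degrees on the $\bar S$ side) together with $e(\bar S) \le \abs{\bar S} - 1$ one obtains $m \ge \abs{\bar S} + 2$, and symmetrically with the roles of $S$ and $\bar S$ swapped.

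With this in hand I would split into two cases. If both $G[S]$ and $G[\bar S]$ contain a circuit, the partition is circuit-separating, so cyclic $4$-edge-connectivity gives $m \ge 4$ and therefore $c = m - d \ge 4 - 2 = 2$. Otherwise at least one side, say $\bar S$, induces a forest, and the counting fact gives $m \ge \abs{\bar S} + 2$. If $\abs{\bar S} \ge 2$ this already forces $m \ge 4$ and again $c \ge 2$. The only remaining possibility is $\abs{\bar S} = 1$. (Note that the case where $m = 0$, i.e.\ $G$ disconnected, cannot arise: in either case the displayed bounds give $m \ge 3$, so connectivity of $G$ falls out rather than having to be assumed.)

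The delicate sub-case, and the one place where the non-adjacency hypothesis is essential, is $\bar S = \{w\}$ a single vertex. Here $m = \deg_G(w) = 3$, and the three crossing edges of $G$ are exactly the three edges incident to $w$; so $c = 3 - d$ and I only need $d \le 1$. If both removed edges crossed the cut they would both be incident to $w$ and hence share the endpoint $w$, contradicting that they are non-adjacent. Thus $d \le 1$, giving $c \ge 2$, which closes the argument. I expect this singleton case to be the main (and essentially only) obstacle: it is where the cubic structure and the non-adjacency of the removed edges must be used together, whereas the cyclic-connectivity hypothesis does its work only in the circuit-separating case.
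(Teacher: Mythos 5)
Your proof is correct and rests on the same underlying facts as the paper's: the paper first notes that cyclic $4$-edge-connectivity of a cubic graph forces $3$-edge-connectivity (so $G'$ is connected) and then argues that a bridge $b$ of $G'$ would make $\set{b,e,f}$ a $3$-cut of $G$, which must be trivial and hence force $e$ and $f$ to share a vertex — exactly your singleton case $\bar S=\set{w}$. The only difference is presentational: you carry out explicitly, via the degree count $3\abs{\bar S}=2e(\bar S)+m$, the step the paper leaves implicit (that small non-trivial cuts in a cubic graph are circuit-separating), so your version is more self-contained but not a genuinely different route.
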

\begin{proof}
  Because $G$ is cyclically 4-edge-connected it is also 3-edge-connected
  (any cut smaller than three in a cubic graph is non-trivial).
  Hence $G'$ is connected. It remains to show that it is bridgeless.

  We proceed by contradiction. Let $b$ be a bridge of $G' = G - \set{e, f}$.
  Because $b$ is a cut in $G'$, the set $C = \set{b, e, f}$
  is a 3-cut in $G$. The graph $G$ is cyclically 4-edge-connected so $C$
  must be a trivial cut (\ie edges around one vertex). This is a contradiction
  with $e$ and $f$ being non-adjacent.
\end{proof}

We observe that graphs with 2-cuts or non-trivial 3-cuts
can be reduced to smaller graphs:

\begin{observation}[2-cut]\label{obs:cdc_2cut}
  Let $G$ be a cubic graph with a cut of size 2 and denote $G_1$ and $G_2$
  graphs obtained by contracting one side of the cut and suppressing
  the new vertex of degree 2.
  Then $\Par(G) = 2 \Par(G_1) \Par(G_2)$.
\end{observation}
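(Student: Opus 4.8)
The plan is to work entirely in the crossing/non-crossing (transition-system) representation discussed above, in which a CiDC of a cubic graph is a choice, at each edge, of how the two covering strands are joined, subject to the condition that every resulting closed walk is a genuine circuit, i.e.\ visits no vertex twice. Write the $2$-cut as $\set{e_1,e_2}$ with $e_i = a_ib_i$, where $a_1,a_2$ lie on one side $A$ and $b_1,b_2$ on the other side $B$ of $G-\set{e_1,e_2}$ (both sides have at least two vertices, since $a_1=a_2$ would force a third cut edge). Then $G_1$ is $A$ together with the suppression edge $f_1=a_1a_2$, and symmetrically $G_2$ is $B$ with $f_2=b_1b_2$.

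First I would pin down how circuits behave at the cut. A circuit is a simple closed curve, so it meets the cut an even number of times and cannot use either $e_i$ twice; hence every circuit leaving $A$ uses $e_1$ and $e_2$ exactly once, and since $e_1$ is covered twice there are exactly two such crossing circuits, all remaining circuits lying entirely in $A$ or entirely in $B$. In particular the two strands arriving in $A$ along $e_1$ must be joined, inside $A$, to the two strands of $e_2$ (a \emph{series} pattern): if instead the two strands of $e_1$ were joined to one another, some circuit would traverse $e_1$ twice. The same holds on side $B$. I would record this as a lemma: in any CiDC of $G$ each side joins $e_1$ to $e_2$ in series, and there are precisely two crossing circuits.

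Next I would identify the data on each side with a CiDC of the reduced graph. Restricting a CiDC of $G$ to $A$ leaves the internal joinings together with two strand-ends at $a_1$ and two at $a_2$; closing these through the single edge $f_1$ produces a transition system of $G_1$. Because $\set{e_1,e_2}$ is an edge cut, the walk fragments inside $A$ are vertex-disjoint from everything on the $B$-side, so a fragment is vertex-simple in $G$ if and only if it is vertex-simple in $G_1$; thus the restriction is a valid CiDC of $G_1$, and conversely every CiDC of $G_1$ arises this way. Moreover the crossing at $f_1$ is forced: of its two settings, only the one agreeing with the series type of $A$ yields two circuits through $f_1$, while the other merges them into a single walk using $f_1$ twice. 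Hence valid ``$A$-data'' are in bijection with $\Par(G_1)$, and ``$B$-data'' with $\Par(G_2)$.

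Finally I would count the gluings. Fixing valid $A$-data and $B$-data, the only remaining freedom is the crossing/non-crossing choice at $e_1$ and at $e_2$, giving four possibilities. A short computation with the two series types shows that the two crossing circuits close up correctly -- each using $e_1$ and $e_2$ once, rather than fusing into one walk that runs through a cut edge twice -- for exactly two of the four choices, namely those whose twists at $e_1$ and $e_2$ are compatible with the prescribed series types. Summing the constant $2$ over all pairs of side-data then gives $\Par(G)=2\,\Par(G_1)\,\Par(G_2)$. The main obstacle is the bookkeeping in the third step: one must check that vertex-simplicity transfers faithfully across the cut, in particular that the circuit condition at the boundary vertices $a_1,a_2$ (resp.\ $b_1,b_2$) in $G$ matches the condition at the same vertices in $G_1$ (resp.\ $G_2$) once $e_1,e_2$ are replaced by $f_1$ (resp.\ $f_2$); the $2$-edge-cut, which makes the two sides share no vertices of $G$, is exactly what makes this transfer, and the clean factor $2$, go through.
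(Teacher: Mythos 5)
Your proposal is correct and follows essentially the same route as the paper's proof: both arguments hinge on the observation that the two cut edges are traversed by the same pair of circuits (your ``series pattern''), identify the data on each side of the cut with a CiDC of $G_1$ resp.\ $G_2$, and extract the factor $2$ from the binary choice of how the two crossing circuits are matched up across the cut. Your phrasing in the crossing/non-crossing transition-system language (with the forced choice at the suppressed edge and the ``2 of 4 gluings'' count) is just a more explicit bookkeeping of the same bijection the paper describes directly in terms of cutting and rejoining circuits.
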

\begin{proof}
  Edges of a 2-cut are covered by the same circuits in any CiDC.
  Let $e$ (resp\. $f$) be the edge of $G_1$ (resp\. $G_2$) created
  by contraction of the other side of the cut and suppression of
  the created 2-vertex.

  First we show the inequality $\geq$:
  Fix any CiDC of $G_1$ and $G_2$.
  Denote $w_{11}$ and $w_{12}$ the circuits covering $e$
  and $w_{21}$ and $w_{22}$ the circuits covering $f$.
  We can construct $G$ from $G_1$ and $G_2$ by cutting $e$ and $f$ and
  joining each half of $e$ with a half of $f$. We can do the same
  for the CiDCs. We have a choice whether to join $w_{11}$ with $w_{21}$
  or $w_{22}$. It is easy to observe that both choices give a valid
  CiDC of $G$ and these CiDCs are different (because two circuits in a CiDC
  of a cubic graph are never the same). Hence $\Par(G) \geq 2 \Par(G_1) \Par(G_2)$.

  For the opposite inequality, observe that every CiDC of $G$ is uniquely described
  by a CiDC of $G_1$, a CiDC of $G_2$ and the way in which the circuits covering
  $e$ a $f$ are joined.
\end{proof}

\begin{observation}[3-cut]\label{obs:cdc_3cut}
  Let $G$ be a graph with cut of size 3 and denote $G_1$ and $G_2$
  graphs obtained by contracting one side of the cut.
  Then $\Par(G) = \Par(G_1) \Par(G_2)$.
\end{observation}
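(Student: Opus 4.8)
The plan is to build a bijection between the circuit double covers of $G$ and the pairs consisting of a CiDC of $G_1$ and a CiDC of $G_2$; unlike the $2$-cut case (Observation~\ref{obs:cdc_2cut}) there will be \emph{no} leftover binary choice, which is precisely why no factor of $2$ appears. As throughout we may take $G$ cubic. Write $e_1,e_2,e_3$ for the three cut edges and $A$, $B$ for the two sides, so that $G_1$ arises by contracting $B$ to a single (cubic) vertex $v_B$ and $G_2$ by contracting $A$ to $v_A$.

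First I would record how circuits meet the cut. A circuit is a simple cycle, so it uses each edge at most once and hence can use at most the three cut edges; since a circuit crosses any edge cut an even number of times, every circuit of a CiDC crosses $\{e_1,e_2,e_3\}$ either $0$ or $2$ times. In particular no circuit can leave and re-enter through the \emph{same} cut edge (that would traverse it twice). Consequently, on each side the six strands reaching the cut are joined into exactly three arcs, and in the resulting $2$-regular loopless multigraph on $\{e_1,e_2,e_3\}$ these arcs realise the three \emph{distinct} pairs $\{e_1,e_2\},\{e_2,e_3\},\{e_1,e_3\}$ — exactly the pattern produced at a single cubic vertex. Call the arcs $A_{12},A_{23},A_{13}$ on side $A$ and $B_{12},B_{23},B_{13}$ on side $B$.

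Next I would define the restriction map $\rho_1$ sending a CiDC $\mathcal C$ of $G$ to a CiDC of $G_1$: contract $B$, replacing each arc $B_{ij}$ by the transition of $v_B$ joining $e_i$ and $e_j$. By the crossing count each cross-cut circuit of $\mathcal C$ has the form $A_{ij}\cup B_{ij}$ (one arc per side), so it maps to $A_{ij}$ closed up through $v_B$, again a circuit; the internal circuits of $A$ survive and those of $B$ vanish. Thus $\rho_1$ is well defined, and symmetrically so is $\rho_2$. The main work is to show $(\rho_1,\rho_2)$ is a bijection onto $\mathrm{CiDC}(G_1)\times\mathrm{CiDC}(G_2)$. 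From CiDCs of $G_1$ and $G_2$ I recover the arcs $A_{ij}$, $B_{ij}$ and all internal circuits and must reglue the two sides along the cut. The crux is that an arc $A_{ij}$ can close into a circuit only together with a $B$-arc joining the \emph{same} pair $\{e_i,e_j\}$, and there is exactly one such arc, namely $B_{ij}$; any other identification of strands at the cut edges forces a closed walk to repeat a cut edge (and a vertex), so it is not a circuit. Hence the gluing is forced, the reconstruction is unique, and $(\rho_1,\rho_2)$ is a bijection, giving $\Par(G)=\Par(G_1)\Par(G_2)$.

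I expect the uniqueness of this gluing to be the delicate point, and it is exactly where the $3$-cut differs from the $2$-cut. In a $2$-cut both arcs on a side join the \emph{same} pair of cut edges, so there is a genuine two-fold choice of partner — the source of the factor $2$ in Observation~\ref{obs:cdc_2cut}. With three cut edges the three arcs join three \emph{distinct} pairs, so each arc has a unique admissible partner and the choice disappears. I would make this rigorous by verifying that every identification of strands other than the matching one $A_{ij}\leftrightarrow B_{ij}$ produces a closed walk traversing some $e_i$ twice, and therefore fails to be a circuit double cover.
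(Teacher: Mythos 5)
Your proof is correct and follows essentially the same route as the paper's: the three cut edges are covered by three circuits pairing them as at a cubic vertex, restriction yields CiDCs of $G_1$ and $G_2$, and the regluing is forced because any mismatched identification produces a closed walk traversing some cut edge twice. The paper states this more tersely, but the argument is the same.
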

\begin{proof}
  Let $e, f, g$ be the edges of a 3-cut. In any CiDC of $G$ this 3-cut is covered
  by three circuits $C_1, C_2, C_3$ such that $e$ is covered by
  $C_1, C_2$, $f$ is covered by $C_2, C_3$ and $g$ is covered by
  $C_1, C_3$. Similarly to the previous observation, a CiDC of $G$ is
  uniquely determined by CiDCs of $G_1$ and $G_2$. There is no choice in
  this case as circuit covering $e$ and $f$ in $G_1$ must be joined
  with the circuit covering $e$ and $f$ in $G_2$ -- all other choices
  lead to a closed walk which uses some edge twice.
\end{proof}


\section{The Flower Construction}

First we present a construction which starts with a graph embedded into some surface
and creates almost exponentially many circuit double covers by doing local
changes to the CiDC given by the embedding.
Despite its name, this construction is not related to Flower snarks in any way.
The basic idea is to choose a face $f$
and modify the CiDC on $f$ and its neighbors (let $n_f$ denote the number of
its neighbor faces)
so we get $2^{\Omega(n_f)}$ CiDC.

This is shown in Figure~\ref{fig:flower_basic} -- every
small circle there denotes a choice whether to cross the walks or not, leading to
exponentially many possibilities. 
We will show below in Observation~\ref{obs:flower_cdc_count} that a constant fraction of the choices leads to 
a CiDC with three circuits in the flower. 
(Other choices lead to one or two self-touching walks.)
Note that when we redo the CiDC on the flower, we cover the outer edges only
once as they are once covered from the outside and we do not want to modify
that part of the CiDC. We model this by taking the flower as a standalone planar
graph embedded in such a way that the rest of the original graph would be
in the outer face and counting all CiDCs which fix the walk corresponding to the
outer face. We call such objects {\em outer-fixed CiDCs} (but we will
sometimes omit ``outer-fixed'' as for the flowers we consider only the
outer-fixed CiDCs).

\begin{definition}[Flower]\label{def:flower}
  Let $G$ be an embedding of a graph into some surface. We say that a face $f$ of size $k$
  together
  with its neighbor faces are a {\em $k$-flower with center $f$} if the following
  is satisfied:
  \begin{enumerate}[noitemsep,topsep=0pt,parsep=0pt,partopsep=0pt]
    \item boundaries of $f$ and all its neighbour faces are cycles,
    \item $k \geq 3$,
    \item $f$ shares exactly one edge with each of its neighbor faces,
    \item consecutive neighbor faces also share exactly one edge, and
    \item non-consecutive neighbor faces do not share any edge.
  \end{enumerate}
\end{definition}

\begin{observation}\label{obs:flower-center}
  Let $G$ be an embedding of a cyclically 4-edge-connected cubic graph
  into a plane.
  Then every face of $G$ is a centre of a flower.
\end{observation}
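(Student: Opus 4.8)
The plan is to verify all five conditions of Definition~\ref{def:flower} for an arbitrary face $f$, using only standard cut-consequences of cyclic $4$-edge-connectivity. First I would record the facts I will lean on. As already observed in the proof of Observation~\ref{obs:cyc-rem2}, a cyclically $4$-edge-connected cubic graph $G$ is $3$-edge-connected. Moreover, a short degree count shows that in a cubic graph every edge cut of size $2$, and every \emph{non-trivial} edge cut of size $3$ (one not consisting of the three edges around a single vertex), is circuit-separating: a side $S$ with $\abs{S} \geq 2$ spans at least $\abs{S}$ edges and hence contains a circuit. By Definition~\ref{def:cyclic-connectivity} such cuts are forbidden, so $G$ has no $2$-cut and every $3$-cut is trivial. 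Consequently $G$ is simple (a loop gives a $1$-cut and parallel edges give a $2$-cut) and, being $3$-edge-connected and cubic, it is $2$-connected. Therefore every face of the given plane embedding is bounded by a circuit, which settles condition~1 for $f$ and all its neighbours, and simplicity forces every face to have size at least $3$, settling condition~2.

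For condition~3 I would show no two faces share two edges. Suppose faces $p,q$ shared distinct edges $e_1,e_2$. Drawing a simple closed curve through the interiors of $p$ and $q$ that crosses only $e_1$ and $e_2$ (once each), the Jordan curve theorem exhibits $\set{e_1,e_2}$ as a $2$-edge-cut separating the two endpoints of $e_1$, which lie on opposite sides --- impossible in a $3$-edge-connected graph. In particular $f$ shares exactly one edge with each neighbour, so the neighbours of $f$ are in bijection with the edges of $\partial f$ and inherit their cyclic order. Condition~4 is then local: consecutive neighbours $g_i, g_{i+1}$ correspond to consecutive boundary edges $a,b$ meeting at a vertex $v$ of $f$; since $G$ is cubic, $v$ has a unique third edge $c$, and the two faces flanking $c$ at $v$ are exactly $g_i$ and $g_{i+1}$, so they share $c$ --- and by condition~3 they share nothing else.

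The crux is condition~5. Let $g_i,g_j$ be non-consecutive neighbours, sharing edges $a_i,a_j$ with $f$ respectively; non-consecutiveness means $a_i$ and $a_j$ are non-adjacent edges of $\partial f$, i.e.\ they have no common endpoint. Assuming for contradiction that $g_i$ and $g_j$ also share an edge $e$ (necessarily distinct from $a_i,a_j$, since each edge borders exactly two faces), I would route a simple closed curve through the interiors of $f$, $g_i$ and $g_j$ crossing exactly $a_i$, $e$, $a_j$. This produces a $3$-edge-cut $\set{a_i,a_j,e}$, which must be trivial by the preliminary step; but triviality forces $a_i$ and $a_j$ to meet at the single cut-vertex, contradicting that they have no common endpoint. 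Hence non-consecutive neighbours share no edge.

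The main obstacle is making the three Jordan-curve arguments rigorous: one must check that the described curve can indeed be drawn inside the relevant face interiors crossing each named edge exactly once (using that faces of a $2$-connected plane graph are open disks), and that both sides of the resulting curve are nonempty so that the crossed edges genuinely form an edge cut of the stated size. Everything else --- the degree count classifying small cuts and the local analysis at a cubic vertex for condition~4 --- is routine.
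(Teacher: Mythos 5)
Your proof is correct and follows essentially the same route as the paper's: each violated condition of Definition~\ref{def:flower} is converted, via a Jordan-curve argument, into an edge cut of size at most~3 that contradicts cyclic 4-edge-connectivity. You are in fact slightly more careful than the paper in two spots --- spelling out why the 3-cut in condition~5 cannot be trivial (the two boundary edges of $f$ involved are non-adjacent) and giving a direct local argument at the cubic vertex for condition~4 --- but these are refinements of the same idea, not a different approach.
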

\begin{proof}
Let $f$ be a $k$-face of $G$. Observe that a closed curve which separates
some vertices and crosses the planar embedding only $i$ times,
proves that the graph contains an edge cut of size $i$.
Hence we can show that violating any of the requirements
leads to a small cut which contradicts the cyclical 4-edge-connectivity:
\begin{enumerate}
  \item If a vertex repeats on a boundary so must an edge because $G$ is cubic.
    Let $e$ be the repeated edge. Edge $e$ is a bridge because there exists a closed
    curve separating its endpoints and crossing only $e$ and no other edge or vertex.
  \item If $k = 1$, then the boundary of $f$
    is a loop which in a cubic graph
    implies existence of a bridge.
    If $k = 2$ then $f$ is bounded by two parallel edges which leads
    to a 2-cut.
  \item If $f$ shares two or more edges with some other face $p$ then there is
    a closed curve cutting only two of the edges shared by them. This cut
    cannot be trivial because it would require a vertex of degree two.
  \item We also find a similar curve if two consecutive neighbor faces of $f$
    share more than one edge.
  \item If two nonconsecutive neighbor faces of $f$ -- denote them $p_1$ and $p_2$ --
    share an edge then there is a closed curve passing through faces $f$, $p_1$ and
    $p_2$ and cutting only the three edges that the faces share.
    \qedhere
\end{enumerate}
\end{proof}

We will also need that we can edit each flower to obtain a lot of CiDCs.
We show that the exact number of outer-fixed CiDCs of a $k$-flower
is $\frac{2^{k-1} + (-1)^k}{3} + 1$ in our paper~\cite{lin-rep}.
But the proof requires additional definitions
so we state a good enough lower bound instead:

\begin{figure}[b]
  \centering
  \includegraphics[scale=0.8]{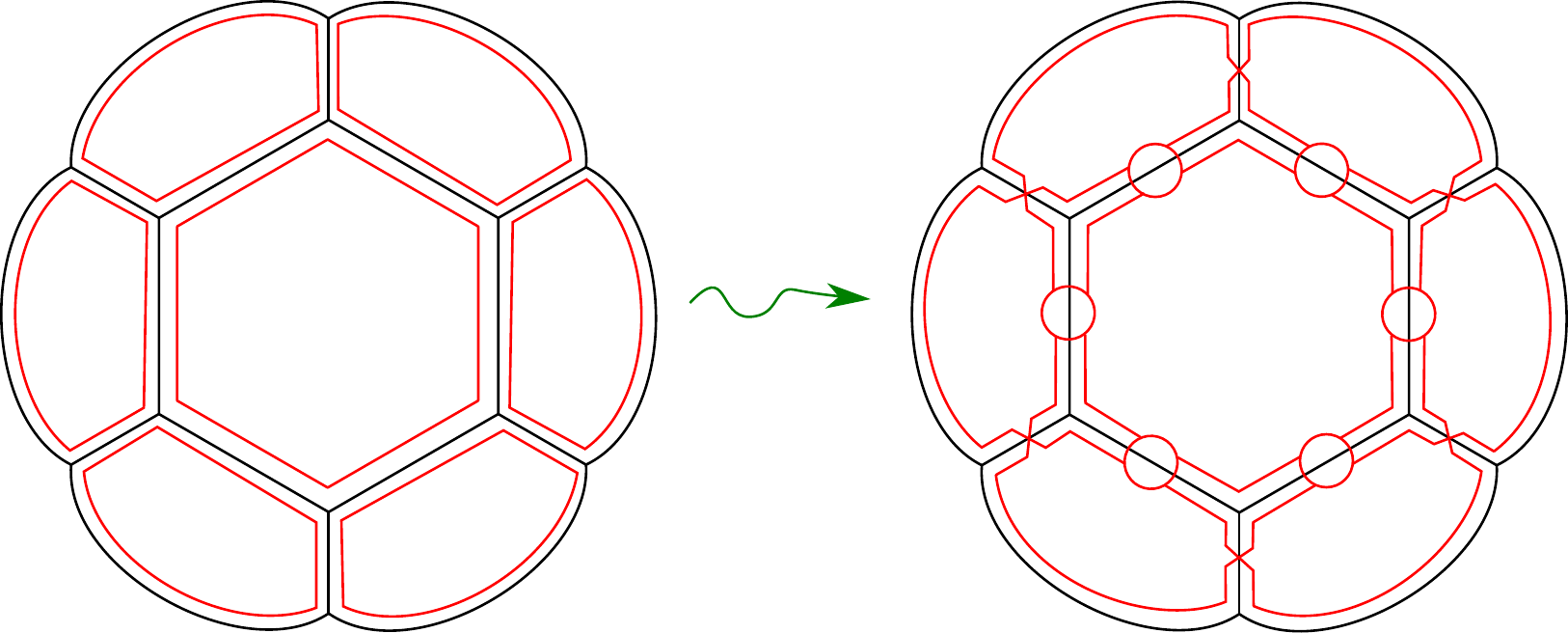}
  \Caption{The basic idea of the flower construction\ignorespaces}{.
    Circles denote the possible choices.\label{fig:flower_basic}}
\end{figure}

\begin{figure}[tb]
  \centering
  \includegraphics[scale=1.5]{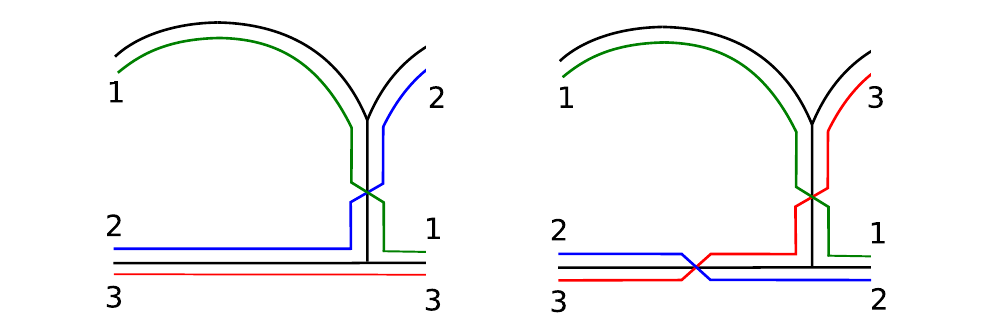}
  \caption[Flower 4-pole]{Flower 4-pole with its two outer-fixed CiDCs we consider.\label{fig:flower_step}}
\end{figure}

\begin{observation}\label{obs:flower_cdc_count}
  The flower of size $k$ has at least $2^{k-3} + 1$ outer-fixed circuit double covers.
\end{observation}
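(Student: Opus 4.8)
The plan is to count \emph{configurations}. Once the outer walk is fixed, Figure~\ref{fig:flower_basic} shows that an outer-fixed CiDC of the $k$-flower is determined by one binary cross/no-cross decision at each of the circled edges, i.e.\ one per sector, so there are $2^{k}$ candidate configurations; the task is to bound from below how many of them close up into a genuine circuit double cover (all walks simple circuits) rather than into the ``one or two self-touching walks'' mentioned above. The CiDC coming from the embedding --- the centre circuit together with the $k$ neighbour circuits --- is one distinguished admissible configuration that I will keep track of separately.

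First I would make the $4$-pole decomposition of Figure~\ref{fig:flower_step} precise: cut the flower along the $k$ edges of the centre face into $k$ copies of the flower $4$-pole, each carrying the two local outer-fixed states drawn there, and glue them cyclically. Reading off the strand that passes between two consecutive $4$-poles assigns a \emph{boundary state} to each of the $k$ gluing seams; tracking the reconnection at a crossing shows there are three possible boundary states, and that the assembled walks are all simple circuits exactly when consecutive seams carry \emph{different} boundary states. Admissible configurations would then be in bijection with proper $3$-colourings of the cyclic sequence of seams up to relabelling of the states, i.e.\ with closed walks of length $k$ in $K_{3}$: since the adjacency matrix $J-I$ has eigenvalues $2,-1,-1$, their number is $\tfrac{1}{6}\,\mathrm{tr}\big((J-I)^{k}\big)=\tfrac{2^{k}+2(-1)^{k}}{6}=\tfrac{2^{k-1}+(-1)^{k}}{3}$, matching the exact value $\tfrac{2^{k-1}+(-1)^{k}}{3}+1$ established in~\cite{lin-rep}. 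Verifying this characterisation --- that precisely the ``consecutive-distinct'' seam patterns avoid self-touching, which is a global closing-up condition around the whole cycle --- is the main obstacle, and it is exactly the part for which the companion paper needs extra definitions. For the present lower bound I only need the \emph{easy} direction: that every consecutive-distinct seam pattern is admissible.

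Granting that sufficient condition the count is elementary and sidesteps the trace computation. Fix the states of two adjacent seams to two distinct values (this simultaneously pins down the representative modulo relabelling); then walk around the cycle letting each of the next $k-3$ seams take any of the two states differing from its predecessor, all admissible by the sufficient condition, and finally choose the last seam distinct from both its predecessor and the starting seam, which is always possible since a third state is available. Distinct choices of the $k-3$ free seams give distinct colourings, hence distinct CiDCs, so this produces a family of $2^{k-3}$ admissible covers. Together with the embedding CiDC --- which is readily checked not to lie in this family (for $k=3$ the family is the single all-distinct pattern and the embedding cover is the second of the two covers of the triangle gadget in Figure~\ref{fig:cdc-triangle}) --- we obtain $2^{k-3}+1$ outer-fixed circuit double covers, as claimed.
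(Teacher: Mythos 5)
Your setup --- cutting the flower into $k$ copies of the $4$-pole of Figure~\ref{fig:flower_step} and reducing everything to one cross/no-cross decision per edge of the central face --- is exactly the paper's, and your seam-state reformulation is attractive: the count $\tfrac{2^{k-1}+(-1)^k}{3}$ of proper $3$-colourings of the cycle of seams up to relabelling does match the exact value quoted from~\cite{lin-rep}. But the argument has a genuine gap at its only nontrivial point: the statement you call the ``easy direction'' (every consecutive-distinct seam pattern closes up into three circuits) is never proved, only ``granted''. That statement \emph{is} the content of the observation --- everything before it is bookkeeping and everything after it is elementary counting --- and it is not visibly easier than the full characterisation, since it is still a global closing-up condition around the whole cycle of $4$-poles, as you yourself note. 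The seam-state formalism is also left unjustified: you define the state of a seam by ``reading off the strand'' passing through it, but which strand passes where is determined only after all $k$ crossing choices are made, and only makes sense if the result really does decompose into three labelled walks. So you still owe (a) a local, choice-independent definition of the states, (b) the fact that every proper colouring of the seam cycle is realised by some vector of crossing choices, and (c) the fact that such a realisation yields three circuits with no repeated vertex. Consistency of your formula with the exact count is evidence, not proof.

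By contrast, the paper proves the closing-up claim directly and locally: it lets the crossings at $k-3$ of the central edges be arbitrary and then runs a short case analysis on three consecutive choice points (Figure~\ref{fig:flower_finish}) to steer each of the three walks back onto its own track, so that no two of them are merged into a self-touching closed walk. That repair step is precisely the work your proposal defers to an assumed lemma. If you supplied a proof of your sufficient condition (say by induction around the cycle in the spirit of that same local analysis), your route would actually be stronger than the paper's, since it aims at the exact count rather than a lower bound; as written it is conditional, hence incomplete. A minor further point: you verify that the embedding CiDC is not in your family only for $k=3$; the clean general argument, used in the paper, is that the embedding CiDC has $k+1$ circuits while every member of the new family has exactly three.
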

\begin{proof}
The ``$+1$'' is the original CiDC given by the embedding.
Note that this CiDC has $k+1$ circuits but all the new CiDCs we create
have exactly three circuits.

Other CiDCs are consisting of three walks winding around the central
face following the idea in Figure~\ref{fig:flower_basic} -- each flower
is a cycle of pieces (technically 4-poles, see Section~\ref{sec:lin-rep}) 
shown in Figure~\ref{fig:flower_step} with one of the depicted outer-fixed CiDCs.
The only difference between these CiDCs is whether we crossed the walks
on the edge of central face.
Hence we have a binary choice on each edge of the central face.
All possible choices lead to closed walks but some of them might join
two (or all three) of them together. If the walks are joined together,
this walk is self-touching and thus not a circuit.

We show that we can choose arbitrarily all the crossings except for some three
consecutive ones and use these three to make sure that the result is
a CiDC leading to $2^{k-3}$ possible CiDCs.
Figure~\ref{fig:flower_finish} shows three consecutive choice points
and the walks leaving it on the right-hand side are numbered 1 to 3.
We distinguish two cases:
\begin{itemize}
  \item The walk 1 appears on the upper edge at the left-hand side
    (the upper part of the figure): In this case we must not cross
    on the choice point of the middle edge so walk 1 stays on the upper edge
    at the right-hand side.
    We might choose arbitrarily on the left choice point but the choice
    on the right edge is forced so we do not join walks 2 and 3 together.
  \item The walk 1 appears on the lower edge at the left-hand side
    (the lower part of the figure): In this case we must choose on the
    left choice point so walk 1 after this point is below the edge.
    Then we cross in the middle and hence walk 1 ends on the upper
    edge on the right-hand side. And again we flip the right choice-point
    so we do not join walks 2 and 3 together.\qedhere
\end{itemize}
\end{proof}

\begin{figure}[tb]
  \centering
  \includegraphics[scale=1]{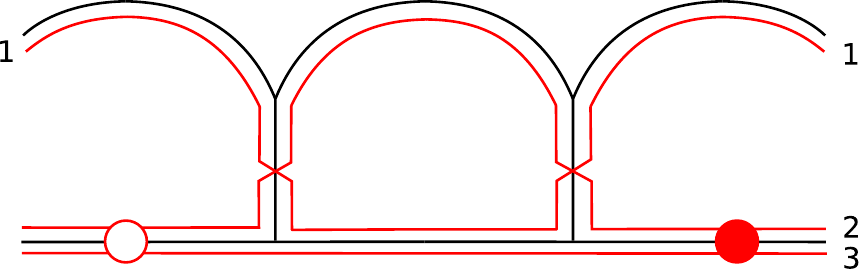}\\\vskip 4ex
  \includegraphics[scale=1]{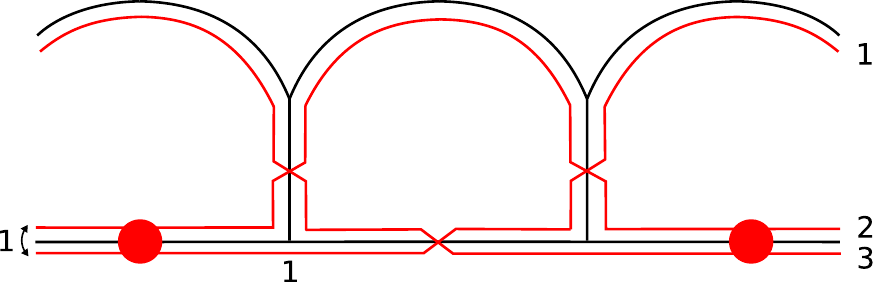}
  \caption[Finishing the flower]{Finishing the flower.
    Full circles denote forced choices.\label{fig:flower_finish}}
\end{figure}

\begin{definition}[Representativity~\cite{RS88}]
  A simple closed curve which does not split the surface into two parts is
  called {\em noncontractible}. Given an embedding of a graph into a surface
  which is not a sphere, the {\em representativity} of this embedding is
  the minimal number of points of the embedding that any closed noncontractible curve
  must intersect.
\end{definition}

\begin{observation}\label{obs:flower-center-gen}
  Let $G$ be an embedding of a cyclically 4-edge-connected cubic graph
  into a surface $\sigma$ with representativity at least 4.
  Then every face of $G$ is a centre of a flower.
\end{observation}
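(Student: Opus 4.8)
The plan is to reuse the proof of Observation~\ref{obs:flower-center} essentially verbatim, replacing the single place where planarity enters by an appeal to the representativity hypothesis. Recall that in the planar argument the only geometric fact used is this: a simple closed curve that separates some vertices and meets the embedding in $i$ points exhibits an edge cut of $G$ of size $i$. On the sphere this holds because \emph{every} simple closed curve separates; on a general surface it can fail for noncontractible curves, and repairing exactly this failure is what representativity at least $4$ is for.

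First I would record two preliminary facts. Since the representativity is at least $4>0$, the embedding is cellular (otherwise a noncontractible curve could be drawn inside a single non-disk face, meeting the embedding in $0$ points), so every face of $G$ is an open disk whose boundary is a closed walk; this is what lets the flower conditions be phrased in the first place. Second, and crucially: if $\gamma$ is a simple closed curve meeting the embedding transversally in fewer than $4$ points and avoiding the vertices, then $\gamma$ cannot be noncontractible, since a noncontractible curve must meet the embedding in at least $4$ points. Hence $\gamma$ is contractible and therefore bounds a disk $D \subseteq \sigma$. The edges crossing $\gamma$ then form an edge cut of $G$ separating $V(G)\cap D$ from $V(G)\setminus D$, of size equal to the number of crossings.

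With this in hand I would run through the five conditions of Definition~\ref{def:flower} exactly as in Observation~\ref{obs:flower-center}. In each case a violation produces a simple closed curve meeting the embedding in at most $3$ points while separating some vertices: a repeated edge on a boundary or a loop at $f$ (a $1$-cut, i.e.\ a bridge); two parallel edges bounding $f$ when $k=2$, or two edges shared by $f$ and a neighbour, or by two consecutive neighbours (a $2$-cut); or the three edges shared by $f$ and two non-consecutive neighbours $p_1,p_2$ (a $3$-cut through $f,p_1,p_2$). By the preliminary fact each such curve is contractible and yields a genuine edge cut of size $1$, $2$ or $3$. As in the planar case this cut is non-trivial — it is never the set of three edges around a single vertex, since a bridge or a $2$-cut would force a vertex of degree less than $3$, and the $3$-cut passes through three distinct faces — so it contradicts cyclical $4$-edge-connectivity. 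Therefore all five conditions hold and $f$ is the centre of a flower.

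The main obstacle, and the only genuinely new content over the planar proof, is the topological step: justifying that a contractible curve meeting the graph in fewer than $4$ points really bounds a disk and hence splits the graph into two sides with the crossed edges as the separating cut. Once this is in place the combinatorial bookkeeping of the five cases is identical to Observation~\ref{obs:flower-center}. I would also take care that each constructed curve is simple and transversal to the embedding and that both sides of the resulting disk carry a vertex, which follows from $k \geq 3$ together with the fact that distinct faces contribute distinct boundary vertices.
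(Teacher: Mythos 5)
Your proof is correct and follows essentially the same route as the paper: rerun the planar argument and use the representativity hypothesis to dispose of the curves with at most $3$ crossings that fail to separate. The only cosmetic difference is that the paper phrases this as a two-way contradiction (a separating curve gives a forbidden small cut, a non-separating one violates representativity), whereas you first invoke representativity to force separation and then extract the cut; note also that under the paper's definition of noncontractible (non-separating) you only get that $\gamma$ splits the surface into two parts, not that it bounds a disk, but separation is all your argument actually uses.
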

\begin{proof}
The same as proof of Observation~\ref{obs:flower-center} but a closed
curve with $i$ crossings we obtain might not separate any vertices. It
is not contractible in this case and hence it shows that representativity
of the embedding
is at most $i$ instead of the graph having a cut of size $i$.
\end{proof}

Note that in contrast with small cuts we do not know how
to work around small representativity.

\section{Graphs on Surfaces}

Now we use the flower construction to obtain almost exponential lower bounds
for graphs on surfaces.
To choose a suitable set of flowers we need their centers to be at distance 3 or more.
This is well modeled by the vertex coloring of the square of the dual graph.
For general surfaces we approximate $\chi(G^2) \leq \Delta^2 + 1$ but for the
plane there is much better bound:

\begin{theorem}[Molloy and Salavatipour~\cite{molloy2005bound}]\label{thm:planar_square_color}
The chromatic number of the square of a planar graph $G$ with maximum degree $\Delta$
is $\chi(G^2) \leq \ceil{\tfrac{5}{3}\Delta} + 78$.
\end{theorem}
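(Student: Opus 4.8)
The plan is to prove this by the discharging method, the standard technique for linear-in-$\Delta$ bounds on planar colourings, combined with a reducibility analysis of a minimal counterexample. To make the local arguments clean I would actually establish the stronger list-colouring statement: every planar $G$ with maximum degree $\Delta$ satisfies $\chi_\ell(G^2) \le \ceil{\tfrac53\Delta} + 78$, by assigning each vertex a list of that many colours. So set $k = \ceil{\tfrac53\Delta}+78$ and let $G$ be a counterexample minimising $|V(G)|$.

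First I would record the basic greedy observation: a vertex $v$ can be coloured last whenever the number of vertices at distance at most $2$ from it is less than $k$, i.e.\ whenever $\deg_{G^2}(v) < k$. Hence in the minimal counterexample every vertex has at least $k$ vertices within distance two, which already forces most vertices to have large degree and to be clustered with other large-degree vertices. I would then build a catalogue of reducible configurations: clusters of low-degree vertices that can be ordered and coloured greedily, and high-degree vertices carrying too many low-degree neighbours. The content of each such lemma is that, were the configuration present, one could delete a vertex, colour the square of the smaller graph by minimality, and extend the colouring, contradicting the choice of $G$. These lemmas translate into purely structural constraints: a vertex of degree near $\Delta$ cannot be adjacent to too many other vertices whose distance-two neighbourhoods are nearly saturated.

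Second comes the discharging itself. Using Euler's formula I would assign initial charges, for instance $\deg(v)-6$ to each vertex $v$ and $2\deg(f)-6$ to each face $f$, so that the total charge is exactly $-12$. I would then design discharging rules that move charge from high-degree vertices toward low-degree vertices and short faces. Invoking the structural constraints extracted in the reducibility step, I would verify that every vertex and every face ends with nonnegative charge, contradicting the negative total and completing the proof.

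The hard part is calibrating the reducible configurations and the discharging rules so that the coefficient comes out as $\tfrac53$ rather than the $2$ (or worse) that a crude argument yields. The fraction $\tfrac53$ hinges on exploiting that planarity severely limits how much the distance-two neighbourhoods of two adjacent high-degree vertices can overlap, so that two such vertices cannot both be ``saturated''; making this quantitative inside the charge accounting, while simultaneously controlling the many boundary cases, is exactly where all the difficulty sits, and it is why the additive term is a large, deliberately unoptimised constant like $78$.
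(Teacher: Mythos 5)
This theorem is not proved in the paper at all: it is quoted verbatim from Molloy and Salavatipour and used as a black box, so there is no ``paper's own proof'' to compare against. Judged on its own terms, your proposal is not a proof but a statement of intent. Every step that carries actual mathematical content is deferred: you say you ``would build a catalogue of reducible configurations,'' ``would design discharging rules,'' and ``would verify that every vertex and every face ends with nonnegative charge,'' but none of these objects is exhibited. The entire difficulty of the theorem --- and you acknowledge this yourself in the last paragraph --- lies precisely in producing that catalogue and those rules so that the coefficient $\tfrac{5}{3}$ and the additive constant $78$ actually emerge from the case analysis. An outline that identifies where the difficulty sits without resolving it establishes nothing; as written, the same template would ``prove'' $\chi(G^2)\le \Delta+C$, which is false.

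There is also a structural mismatch with the actual argument of Molloy and Salavatipour. Their proof is not a single minimal-counterexample-plus-discharging argument in the classical four-colour-theorem style. They first prove, via discharging, a purely structural lemma about planar graphs (the existence of vertices or edges in controlled configurations), and then run a separate, rather delicate colouring procedure that orders the vertices, treats the high-degree (``big'') vertices specially, and exploits quantitatively the fact that planarity bounds the overlap of second neighbourhoods of adjacent big vertices. Your sketch conflates these two phases into one discharging pass over a minimal counterexample, which is not known to work directly for this bound. If you wish to use this theorem in the present paper, the correct move is simply to cite it, as the authors do; if you wish to prove it, you would need to supply the full configuration list and charge rules, which is a substantial paper-length undertaking rather than a three-paragraph plan.
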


\begin{theorem}\label{thm:flower_planar}
  Every bridgeless cubic planar graph with $n \geq 4$ vertices
  has $2^{\sqrt{n}/100}$ circuit double covers.
\end{theorem}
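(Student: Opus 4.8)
The plan is to combine the flower construction (Observation~\ref{obs:flower_cdc_count}) with the colouring bound for squares of planar graphs (Theorem~\ref{thm:planar_square_color}), after reducing to the cyclically 4-edge-connected case so that Observation~\ref{obs:flower-center} guarantees every face is a flower centre.

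First I would reduce to the case where $G$ is cyclically 4-edge-connected. If $G$ has a 2-cut or a non-trivial 3-cut, Observations~\ref{obs:cdc_2cut} and~\ref{obs:cdc_3cut} let me split $G$ into smaller planar cubic graphs $G_1, G_2$ with $\Par(G) \geq \Par(G_1)\Par(G_2)$ (up to a harmless factor of $2$), and one side has at least, say, $n/2$ vertices, so an inductive bound of the shape $2^{\sqrt{n}/100}$ should be preservable by the superadditivity $\sqrt{n_1}+\sqrt{n_2} \geq \sqrt{n_1+n_2}$ when both parts are large enough; the few small base cases with $n\ge 4$ can be checked by hand. So I may assume $G$ is cyclically 4-edge-connected and planar, and fix a plane embedding.

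Next, let $G^*$ be the dual graph. By Observation~\ref{obs:flower-center} every face of $G$ (every vertex of $G^*$) is the centre of a flower. I want to pick a large \emph{independent set of flowers} whose centres are pairwise at distance at least $3$ in $G^*$, so that the local modifications from Observation~\ref{obs:flower_cdc_count} on different flowers are supported on disjoint edge sets and can be combined freely. Such a set is exactly an independent set in the square $(G^*)^2$, hence a colour class of a proper colouring of $(G^*)^2$. Since $G$ is cubic and planar with $\Omega(n)$ faces, $G^*$ is a planar graph on $\Omega(n)$ vertices whose maximum degree is the maximum face size $\Delta$; Theorem~\ref{thm:planar_square_color} gives $\chi((G^*)^2) \leq \lceil \tfrac{5}{3}\Delta\rceil + 78$. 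Taking the largest colour class yields a set $F$ of pairwise-distant flowers with $|F| \cdot \Delta \gtrsim |V(G^*)| = \Omega(n)$.

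Finally I combine the choices. On each flower in $F$ of size $k_i$, Observation~\ref{obs:flower_cdc_count} provides at least $2^{k_i-3}$ outer-fixed CiDCs, and since distinct flowers touch disjoint edges the global count multiplies, giving at least $\prod_{i} 2^{k_i-3} = 2^{\sum_i (k_i-3)}$ circuit double covers of $G$. The arithmetic to finish is a balancing argument: by AM--GM / convexity the product $|F| \cdot \Delta$ is essentially $\sum_i k_i$, and I must show $\sum_i(k_i-3) \geq \sqrt{n}/100$. \textbf{The main obstacle} is precisely this quantitative trade-off: if the largest face size $\Delta$ is huge then there are few flowers (small $|F|$) but each contributes a lot, whereas if $\Delta$ is bounded then $|F| = \Omega(n)$ but each flower is small. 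The worst case is an intermediate $\Delta \approx \sqrt{n}$, which is exactly where the $\sqrt{n}$ in the exponent comes from and why one cannot hope for a full exponential bound by this method. I would case-split on $\Delta$ (say $\Delta \le \sqrt n$ versus $\Delta > \sqrt n$): in the first case use $|F| = \Omega(n/\Delta) \ge \Omega(\sqrt n)$ flowers each of size $\ge 3$, and in the second case a single large flower of size $\Delta > \sqrt n$ already yields $2^{\Delta-3} \ge 2^{\sqrt n/100}$, carefully tracking the constant $78$ and the $-3$ per flower to land below the stated exponent for all $n \ge 4$.
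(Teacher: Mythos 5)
Your proposal follows essentially the same route as the paper: reduce to the cyclically 4-edge-connected case via Observations~\ref{obs:cdc_2cut} and~\ref{obs:cdc_3cut} together with $\sqrt{n_1}+\sqrt{n_2}\geq\sqrt{n_1+n_2}$, then split on the maximum face size $\Delta$ versus $\sqrt{n}$, using a single large flower when $\Delta \geq \sqrt{n}$ and a large colour class of the square of the dual (Theorem~\ref{thm:planar_square_color}) of pairwise-distant flowers otherwise. The only point to watch is that your product $\prod_i 2^{k_i-3}$ degenerates to $1$ when all $k_i=3$; you need the ``$+1$'' in Observation~\ref{obs:flower_cdc_count} so that every flower contributes a factor of at least $2$, which is exactly how the paper closes that case.
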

\begin{proof}
  Let $G$ be the given graph. If $G$ is not cyclically 4-edge-connected
  we apply Observation~\ref{obs:cdc_2cut} or~\ref{obs:cdc_3cut}.
  Let $n_1$ and $n_2$ be the number of vertices of $G_1$ and $G_2$ from
  the observations.
  The application of Observation~\ref{obs:cdc_2cut} gives us:
  $$\nu(G) = 2\nu(G_1)\nu(G_2) \geq 2^{\sqrt{n_1}/100 + \sqrt{n_2}/100 + 1}
  \geq 2^{\sqrt{n_1 + n_2}/100} = 2^{\sqrt{n}/100}
  $$
  where the last inequality holds because $n_1 + n_2 = n$ and
  it holds $\sqrt{x} + \sqrt{y} \geq \sqrt{x + y}$ for all non-negative $x, y$.
  Similarly for Observation~\ref{obs:cdc_3cut} where $n_1 + n_2 = n + 2$.

  If $G$ is cyclically 4-edge-connected, then every face is a center
  of a flower due to Observation~\ref{obs:flower-center}
  and we distinguish two cases depending
  on $\Delta$, the maximum size of a face:
  \begin{itemize}
    \item $\Delta \geq \sqrt{n}$: We
  choose the largest face to be the center of a flower and this flower itself gives
  us $\max\{2^{\sqrt{n}-3}, 2\} \geq 2^{\sqrt{n}/100}$ CiDCs.
    \item $\Delta \leq \sqrt{n}$: We apply
  Theorem~\ref{thm:planar_square_color} to the dual showing that its chromatic
  number is at most $\ceil{\tfrac{5}{3}\Delta} + 78 \leq 100\Delta \leq 100\sqrt{n}$.
  We can choose flowers in such a way
  that there is at least $n/(100\sqrt{n}) = \sqrt{n}/100$ of them. Every
  flower has at least two CiDCs so in total we have at least $2^{\sqrt{n}/100}$ CiDCs.
  \qedhere
  \end{itemize}
\end{proof}

For general surfaces we have to also consider that Euler
characteristic $\chi$ influences the number of faces (note that $\chi$ is
negative except for plane, projective plane, torus and Klein bottle) but otherwise
the proof is similar:

\begin{theorem}\label{thm:flower_general}
  Every bridgeless cubic graph with $n \geq 4$ vertices and with embedding in
  a surface of Euler characteristic $\chi$
  with representativity at least 4 has $2^{\sqrt[3]{n + 2\chi}/100}$
  circuit double covers.
\end{theorem}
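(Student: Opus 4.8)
The plan is to follow the template of the planar Theorem~\ref{thm:flower_planar} almost verbatim, making three substitutions: I replace the appeal to Observation~\ref{obs:flower-center} by Observation~\ref{obs:flower-center-gen} (which needs representativity at least~$4$), I replace the Molloy--Salavatipour bound by the trivial estimate $\chi(G^2)\le\Delta^2+1$ for the square of any graph of maximum degree~$\Delta$, and I use Euler's formula on~$\sigma$ to count faces. Writing $F$ for the number of faces, a cubic graph on~$n$ vertices has $3n/2$ edges, so $n-\tfrac{3n}{2}+F=\chi$ gives $F=n/2+\chi$, i.e. $2F=n+2\chi$; thus the quantity $n+2\chi$ appearing in the exponent is simply twice the number of faces, and the cube root (rather than the square root of the planar case) will come out of balancing the weaker square-colouring bound.

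First I would dispose of graphs that are not cyclically 4-edge-connected by induction, exactly as in Theorem~\ref{thm:flower_planar} but using subadditivity of the cube root, $\sqrt[3]{x}+\sqrt[3]{y}\ge\sqrt[3]{x+y}$. The one genuinely new point is topological: a circuit-separating cut of size $2$ or $3$ corresponds to a closed curve meeting the embedding in only $2$ or $3$ points, so representativity at least~$4$ forces this curve to be \emph{contractible}; hence it bounds a disc and one side of the cut is planar. I would therefore split off the planar side as $G_2$ (assigning it $\chi_2=2$) while the other side $G_1$ remains embedded in~$\sigma$ with representativity still at least~$4$ (modifying the graph only inside a disc cannot lower representativity). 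Observation~\ref{obs:cdc_2cut} then gives $\nu(G)=2\nu(G_1)\nu(G_2)$ with $n_1+n_2=n$ and $\chi_1=\chi$, $\chi_2=2$, and the induction closes because $\sqrt[3]{n_1+2\chi}+\sqrt[3]{n_2+4}\ge\sqrt[3]{n+2\chi+4}\ge\sqrt[3]{n+2\chi}$; the 3-cut case via Observation~\ref{obs:cdc_3cut} is identical up to the bookkeeping $n_1+n_2=n+2$.

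For the base case, a cyclically 4-edge-connected $G$, Observation~\ref{obs:flower-center-gen} makes every face a flower centre, and I would split on the maximum face size~$\Delta$ with threshold $\Delta=\sqrt[3]{n+2\chi}$. If $\Delta$ is above the threshold, the single flower of size~$\Delta$ already yields $2^{\Delta-3}\ge 2^{\sqrt[3]{n+2\chi}/100}$ covers by Observation~\ref{obs:flower_cdc_count}. If $\Delta$ is below the threshold, I colour the square of the dual, whose maximum degree is~$\Delta$, using $\chi((G^\ast)^2)\le\Delta^2+1$ colours; the largest colour class is a set of at least $F/(\Delta^2+1)$ faces pairwise at dual-distance $\ge 3$, so their flowers are vertex-disjoint and may be edited independently, each contributing a factor~$2$. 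This gives $\nu(G)\ge 2^{F/(\Delta^2+1)}$, and with $F=(n+2\chi)/2$ and $\Delta^2+1\le 2(n+2\chi)^{2/3}$ the exponent is at least $\tfrac14(n+2\chi)^{1/3}\ge\sqrt[3]{n+2\chi}/100$. The threshold was chosen so that $\Delta\approx\Delta^{-2}F$, i.e. $\Delta^3\approx F$, which is exactly where the cube root is forced.

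The main obstacle, and the only place where real care beyond the planar argument is needed, is the reduction paragraph: I must verify that under representativity $\ge 4$ every small circuit-separating cut really does cut off a planar disc (the contractibility argument above) and that deleting the contents of a disc preserves representativity of the piece that stays on~$\sigma$, so that the inductive hypothesis applies to $G_1$. The remaining loose ends are routine: small instances (pieces with fewer than~$4$ vertices arising from reductions, or small values of $n+2\chi$) are absorbed by the generous constant~$100$, exactly as in the planar proof, and the flower-balancing computation is a direct cube-root analogue of the square-root computation already carried out for Theorem~\ref{thm:flower_planar}.
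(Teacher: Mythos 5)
Your proposal follows the paper's proof essentially verbatim: reduce 2- and 3-cuts via Observations~\ref{obs:cdc_2cut} and~\ref{obs:cdc_3cut} with subadditivity of the cube root, then in the cyclically 4-edge-connected case split on the maximum face size, using one large flower or a colouring of the square of the dual with $\Delta^2+1$ colours. The only (cosmetic) difference is in the reduction bookkeeping --- the paper tracks the face counts directly via $f_1+f_2=f+2$ (resp.\ $f+3$) rather than assigning Euler characteristics to the two pieces --- and your explicit remark that the cut curve is contractible and bounds a disc is a welcome elaboration of the paper's terser justification.
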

\begin{proof}
  The number of the faces is
  $f = \frac{1}{2}(n + 2\chi)$ due to Euler's formula.
  If the graph is not cyclically 4-edge-connected
  we apply Observation~\ref{obs:cdc_2cut} or~\ref{obs:cdc_3cut}. Note that there
  exists a closed curve on the surface which crosses the embedding of the graph
  only in the cut edges such that it is a separating curve (otherwise there is
  a face which is not a disk)
  so we obtain embeddings of the smaller graphs and we can proceed.
  For the number of the faces $f_1$, $f_2$ of the smaller graphs it holds
  $f_1 + f_2 = f + 2$ in the case of a 2-cut and Observation~\ref{obs:cdc_2cut}
  gives us:
  $$\nu(G) = 2\nu(G_1)\nu(G_2) \geq 2^{\sqrt[3]{2f_1}/100 + \sqrt[3]{2f_2}/100 + 1}
  \geq 2^{\sqrt[3]{2f_1 + 2f_2}/100} \geq 2^{\sqrt[3]{2f}/100}
  $$
  and similarly for 3-cut where $f_1 + f_2 = f+3$.
  Hence we can proceed by induction.

  Now let us assume that the graph $G$ is 4-edge-connected. 
  Because we ensured that there are no small cuts and excluded the embeddings
  with small representativity, every face is a center of a flower.
  Again we distinguish two cases depending on the maximum size of a face $\Delta$:
  If $\Delta \geq \sqrt[3]{f}$, we choose the largest face to be a center
  of a flower obtaining $\max\{2^{\sqrt[3]{f} - 3}, 2\} \geq 2^{\sqrt[3]{2f}/100}$
  CiDCs.
  Otherwise we color the square of the dual
  with $(\sqrt[3]{f})^2$ colors so the largest color class has
  size $f/\sqrt[3]{f}^2 \geq \sqrt[3]{2f}/100$ and we choose it as centers
  of the flowers.
\end{proof}

There also exists an analogue of Theorem~\ref{thm:planar_square_color} for every
fixed surface so for a fixed surface we can improve the exponent:

\begin{theorem}[Amini et at\.~\cite{amini2013}]\label{thm:square_color}
Let $\sigma$ be a fixed surface. Then there exists $c \in \R$ such that
the chromatic number of the square of a graph $G$ with embedding into $\sigma$
with maximum degree $\Delta$
is $\chi(G^2) \leq c\Delta$.
\end{theorem}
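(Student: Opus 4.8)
The plan is to prove the stronger statement that, for a fixed surface $\sigma$, the square $G^2$ is $c_\sigma\Delta$-degenerate, where $c_\sigma$ depends only on $\sigma$; since a $d$-degenerate graph is $(d+1)$-colourable, a greedy colouring then yields $\chi(G^2)\le c_\sigma\Delta+1\le c\Delta$ for $\Delta\ge 1$ (absorbing the additive constant into $c$). Thus it suffices to show that for every $S\subseteq V(G)$ the induced subgraph $G^2[S]$ has a vertex of degree $O(\Delta)$, and for this I would bound its number of edges by $O(\Delta\abs S)$ and invoke averaging. Note that $G[S]$ again embeds in $\sigma$, so by Euler's formula $E_1:=\abs{E(G[S])}\le 3\abs S+O(1)$, and only the distance-exactly-two pairs need real work.

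Write $s_w=\abs{N(w)\cap S}$ for $w\in V(G)$. Every pair of $S$ at distance exactly $2$ has a common neighbour, so the number of such pairs is $E_2\le\sum_{w}\binom{s_w}{2}$ (an overcount is harmless). The naive bound $\sum_w\binom{s_w}2\le\tfrac\Delta2\sum_w s_w\le\tfrac{\Delta}2\cdot\Delta\abs S$ is only $O(\Delta^2\abs S)$ and useless; the key idea is to split the centres $w$ according to whether $s_w=2$ or $s_w\ge 3$. For centres with $s_w=2$ the contribution is exactly their number, which is at most $\tfrac12\sum_{u\in S}\deg_G(u)\le\tfrac12\Delta\abs S$. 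For centres with $s_w\ge 3$ I would use sparsity: form the bipartite graph $B$ with parts $S$ and $W_{\ge3}=\set{w:s_w\ge3}$ and edges the corresponding $G$-adjacencies. Even though a vertex may lie in both parts, splitting each such vertex (partitioning its rotation into two arcs) keeps $B$ embedded in $\sigma$, so $B$ is a bipartite, hence girth-$\ge4$, graph on $\sigma$, and Euler's formula gives $\abs{E(B)}\le 2(\abs S+\abs{W_{\ge3}})+O(1)$. Since every $w\in W_{\ge3}$ has degree $\ge3$ in $B$, this forces $\abs{W_{\ge3}}\le 2\abs S+O(1)$ and $\abs{E(B)}\le 6\abs S+O(1)$. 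Finally, from $s_w\le\Delta$ one has $\binom{s_w}2\le\tfrac\Delta2(s_w-1)$, so the contribution of these centres is at most $\tfrac\Delta2\abs{E(B)}=O(\Delta\abs S)$.

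Adding the pieces gives $\abs{E(G^2[S])}=E_1+E_2=O(\Delta\abs S)$ for every $S$, with the implied constant depending only on $\sigma$; hence the average degree in $G^2[S]$ is $O(\Delta)$, some vertex has degree $O(\Delta)$, and $G^2$ is $O(\Delta)$-degenerate as required. (For $\abs S$ smaller than the $\sigma$-dependent additive constants the subgraph has $O(1)$ vertices and the bound is trivial.) The main obstacle is exactly the step that makes the density bound hereditary: the distance-two edges of $G^2[S]$ may be witnessed by common neighbours lying outside $S$, so the edge count of $G^2[S]$ is not governed by the embedding of $G[S]$ alone. The dichotomy on $s_w$ together with the vertex-splitting observation is what lets the embedding of the whole graph control these external witnesses; obtaining the optimal constant (rather than merely some $c_\sigma$) is what forces the much more delicate discharging analysis of the cited paper, which I would not attempt here.
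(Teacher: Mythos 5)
The paper does not prove this statement at all --- it is quoted from Amini, Esperet and van den Heuvel as a black box --- so your argument can only be judged on its own merits. Your overall strategy (bound $|E(G^2[S])|$ by $O(\Delta|S|)$ for every $S$ via Euler's formula, conclude $O(\Delta)$-degeneracy and colour greedily) is the standard route to a linear-in-$\Delta$ bound, and most of the steps are fine: the count of distance-one edges, the dichotomy on $s_w$, the $s_w=2$ case, and the estimate $\binom{s_w}{2}\le\tfrac{\Delta}{2}(s_w-1)$ are all correct.

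The step that does not hold up is the claim that the auxiliary bipartite graph $B$ embeds in $\sigma$ after ``splitting each vertex of $S\cap W_{\ge3}$, partitioning its rotation into two arcs.'' The two edge classes at such a vertex $v$ --- edges in which $v$ acts as the $S$-endpoint and edges in which it acts as the $W_{\ge3}$-endpoint --- are determined by the roles of the \emph{other} endpoints, not by the cyclic order at $v$; they need not form two contiguous arcs of the rotation, and an edge of $G$ with both ends in $S\cap W_{\ge3}$ must even be used twice, once in each role. Splitting a vertex along a non-contiguous partition of its incident edges can raise the genus: the planar multigraph obtained from $K_{3,3}$ by identifying two vertices of one part splits back into $K_{3,3}$. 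So the girth-$4$ bound $|E(B)|\le 2(|S|+|W_{\ge3}|)+O(1)$ is unjustified, and it is exactly the constant $2$ that makes your bootstrap $3|W_{\ge3}|\le 2|S|+2|W_{\ge3}|+O(1)$ close; with the generic constant $3$ the inequality says nothing. The gap is real but easily repaired without any splitting: raise the threshold. Centres with $3\le s_w\le 6$ satisfy $\binom{s_w}{2}\le\tfrac52 s_w$ and contribute at most $\tfrac52\Delta|S|$ in total. For $W_{\ge7}=\{w:s_w\ge7\}$ take the honest subgraph $H\subseteq G$ consisting of all $S$--$W_{\ge7}$ edges; then $\sum_{w\in W_{\ge7}}s_w\le 2|E(H)|$ and $|E(H)|\le 3(|S|+|W_{\ge7}|)+O(1)$, so $7|W_{\ge7}|\le 6|S|+6|W_{\ge7}|+O(1)$, giving $|W_{\ge7}|\le 6|S|+O(1)$ and hence $\sum_{w\in W_{\ge7}}s_w=O(|S|)$, which is all you need. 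With that substitution your proof is correct; it yields a much larger constant than the cited paper's $\tfrac32\Delta+O(1)$, but that is irrelevant for the way the theorem is used here.
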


\begin{corollary}\label{cor:surface-lb}
  Let $\sigma$ be a fixed surface.
  Every bridgeless cubic graph with embedding in the surface $\sigma$
  and with representativity at least 4 has $2^{\Omega(\sqrt{n})}$
  circuit double covers.
\end{corollary}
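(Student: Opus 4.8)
The plan is to repeat the proof of Theorem~\ref{thm:flower_general} almost verbatim, replacing the generic quadratic colouring bound $\chi(G^2) \le \Delta^2 + 1$ used there by the linear bound of Theorem~\ref{thm:square_color}; this single substitution upgrades the cube-root exponent to a square-root one. Since $\sigma$ is fixed, its Euler characteristic $\chi$ is a constant, so by Euler's formula the number of faces is $f = \tfrac12(n+2\chi) = \Omega(n)$, and it therefore suffices to produce $2^{\Omega(\sqrt f)}$ circuit double covers.

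First I would dispose of small cuts exactly as in Theorem~\ref{thm:flower_general}: if $G$ is not cyclically 4-edge-connected, a separating curve through the cut edges splits the embedding into embeddings of the two reduced graphs $G_1,G_2$, and Observations~\ref{obs:cdc_2cut} and~\ref{obs:cdc_3cut} reduce the count multiplicatively. The induction now invokes the elementary inequality $\sqrt x + \sqrt y \ge \sqrt{x+y}$ (as in Theorem~\ref{thm:flower_planar}) in place of its cube-root analogue, so the $\sqrt f$ bound is preserved under both the 2-cut relation $f_1+f_2 = f+2$ and the 3-cut relation $f_1+f_2=f+3$.

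It remains to treat a cyclically 4-edge-connected $G$. By Observation~\ref{obs:flower-center-gen}, which is exactly where representativity $\ge 4$ is used, every face is the centre of a flower. Writing $\Delta$ for the largest face size, I split into two cases around the threshold $\Delta \asymp \sqrt f$. If $\Delta \ge \sqrt f$, the single largest flower already yields $2^{\Delta-3}+1 \ge 2^{\Omega(\sqrt f)}$ outer-fixed CiDCs by Observation~\ref{obs:flower_cdc_count}. If $\Delta < \sqrt f$, I apply Theorem~\ref{thm:square_color} to the dual (whose maximum degree is $\Delta$), colouring the square of the dual with $c\Delta$ colours where $c=c(\sigma)$; the largest colour class is a set of $\ge f/(c\Delta) > \sqrt f/c$ faces at pairwise distance at least $3$, hence the centres of a family of pairwise non-interacting flowers whose local binary choices are independent. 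Each contributes a factor of at least $2$, giving $2^{\,f/(c\Delta)} = 2^{\Omega(\sqrt f)}$ CiDCs.

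The computation that makes the square root appear is the balancing: with a linear colour bound the small-$\Delta$ case yields $\approx f/\Delta$ flowers, which against the large-$\Delta$ yield of $\approx 2^{\Delta}$ is optimised at $\Delta \asymp \sqrt f$, whereas the quadratic bound used in Theorem~\ref{thm:flower_general} produced only $\approx f/\Delta^2$ flowers and forced $\Delta \asymp f^{1/3}$. I do not expect a deep obstacle here; the genuine care-points are bookkeeping, namely absorbing the surface-dependent constant $c(\sigma)$ into $\Omega(\cdot)$ while keeping the dependence on $n$ explicit (legitimate precisely because $\sigma$, and hence $c$ and $\chi$, is fixed), and checking that the cut reductions leave the smaller embeddings with representativity at least $4$ (or planar, in which case Theorem~\ref{thm:flower_planar} applies) so that the inductive hypothesis remains available.
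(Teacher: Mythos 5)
Your proposal is correct and matches the paper's intended argument exactly: the paper states this corollary without a written proof, relying on the reader to rerun the proof of Theorem~\ref{thm:flower_general} with the linear colouring bound of Theorem~\ref{thm:square_color} in place of the quadratic one, which rebalances the two cases at $\Delta\asymp\sqrt{f}$ and yields the $2^{\Omega(\sqrt{n})}$ bound since $\chi(\sigma)$ is a fixed constant. The care-points you flag (absorbing $c(\sigma)$ into the $\Omega$, and preserving the hypotheses under the cut reductions) are the same ones the paper itself handles only implicitly.
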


This theorem has two potential weaknesses: It might happen that the Euler
characteristic of the embeddings will decrease too fast leading to a constant
number of faces. We are currently not aware of any graph sequence for which
this happens for all the possible embeddings.

The second weakness is that all the embeddings might have small representativity.
While working on Grünbaum's conjecture, Mohar and Vodopivec~\cite{mohar2006polyhedral} 
proved that this is the case for Flower snarks (defined by Isaacs~\cite{flower-snarks},
for small examples see Figure~\ref{fig:flower-snarks}):

\begin{figure}[thb]
  \centering
  \begin{subfigure}[c]{0.3\textwidth}
    \includegraphics[width=\textwidth]{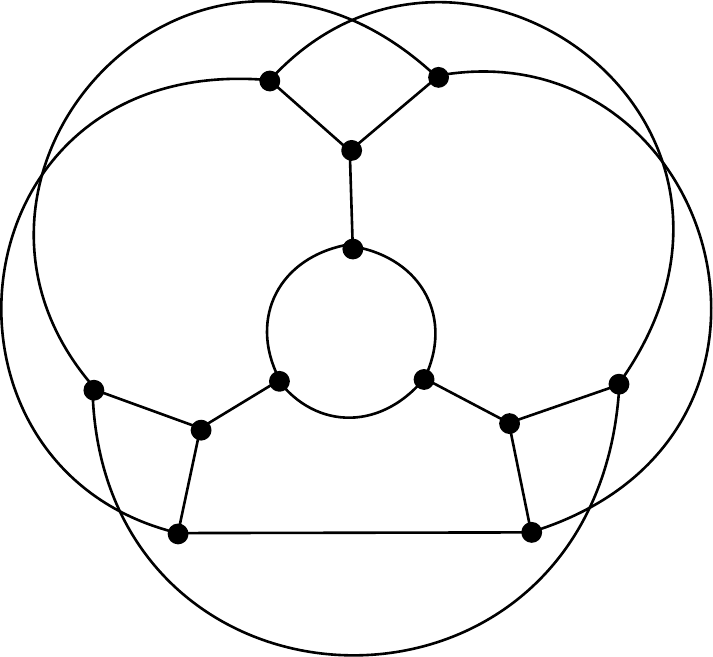}
  \end{subfigure}\hfil
  \begin{subfigure}[c]{0.3\textwidth}
    \includegraphics[width=\textwidth]{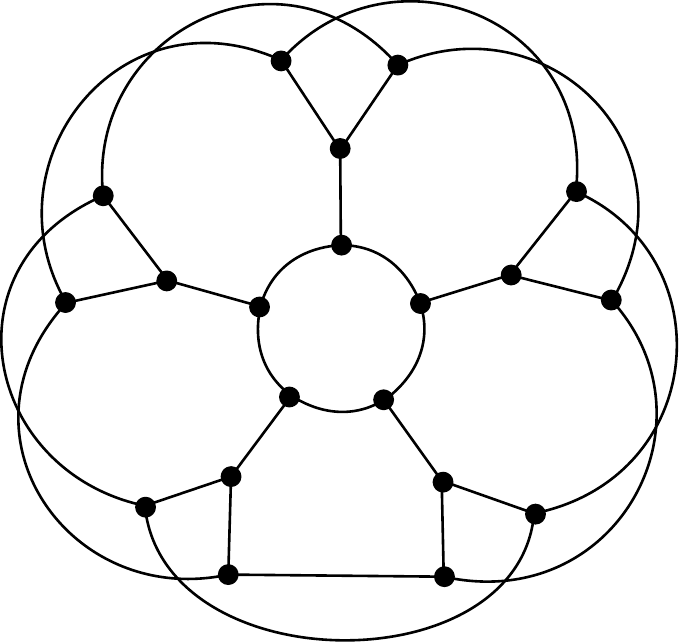}
  \end{subfigure}\hfil
  \begin{subfigure}[c]{0.3\textwidth}
    \includegraphics[width=\textwidth]{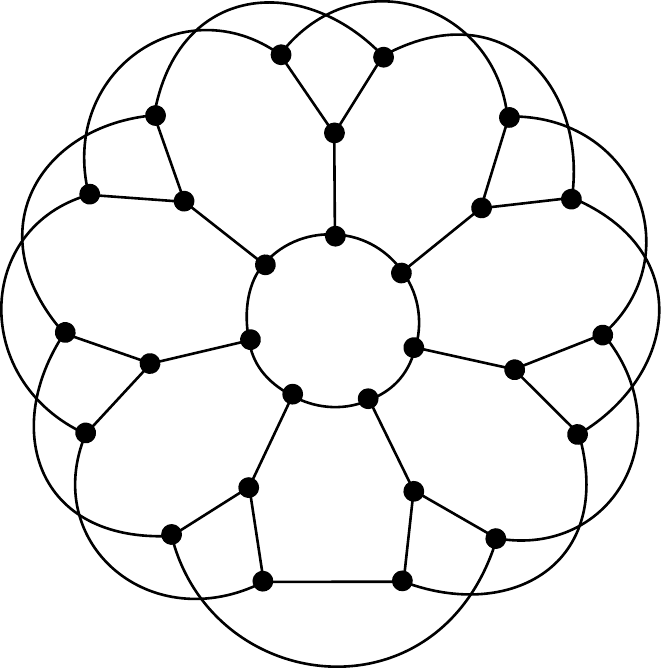}
  \end{subfigure}
  \caption{Flower snarks $J_3$, $J_5$ and $J_7$}
  \label{fig:flower-snarks}
\end{figure}

\begin{theorem}[Mohar and Vodopivec~\cite{mohar2006polyhedral}]
  All embeddings of a Flower snark~$J_k$ for $k \geq 5$ have representativity at most 2.
\end{theorem}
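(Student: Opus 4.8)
The statement is an \emph{upper} bound on representativity—equivalently, the assertion that $J_k$ has no polyhedral embedding—so I would argue by contradiction: assume $J_k$ is embedded in some surface $\Sigma \neq S^2$ with representativity at least $3$. First I would fix the structure of $J_k$. It is assembled from $k$ claws $B_1,\dots,B_k$, the $i$-th having centre $v_i$ and tips $x_i,y_i,z_i$; the tips form an inner $k$-cycle $x_1\cdots x_k$ and an outer $2k$-cycle $y_1\cdots y_k z_1\cdots z_k$. Consecutive claws are therefore joined by exactly three \emph{link} edges $x_ix_{i+1},\,y_iy_{i+1},\,z_iz_{i+1}$, which constitute a cyclic $3$-edge-cut $T_i$, and the sole irregularity is the \emph{twist} at the wrap-around, where the two outer strands are interchanged (the edges there are $y_kz_1$ and $z_ky_1$). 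Contracting each claw to a point presents $J_k$ as a $k$-cycle with every edge tripled, carrying the transposition $(y\,z)$ at a single junction; this ``tripled cycle with a twist'' is the picture I would exploit.

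The heart of the plan is to show that representativity at least $3$ forces the whole flower into an annular (cylindrical) region of $\Sigma$. The lemma I would establish is that each cyclic $3$-cut $T_i$ is \emph{flat}: there is a simple closed curve $\gamma_i$ meeting $J_k$ exactly in the three link edges of $T_i$ and separating $B_1,\dots,B_i$ from the remaining blocks. Given flatness, the curves $\gamma_1,\dots,\gamma_k$ are pairwise nested around the cyclic sequence of blocks, and the region of $\Sigma$ between $\gamma_i$ and $\gamma_{i+1}$ is an annulus carrying only the single claw $B_{i+1}$ crossed by three parallel strands. Concatenating these $k$ annuli exhibits a neighbourhood of $J_k$ as a cylinder (or, after the twist, a twisted cylinder) around which the three strands run in parallel. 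At this level one already reads off that for at least one $i$ the meridian $\gamma_i$ is noncontractible—otherwise the cylinder caps off on both ends with disks and $\Sigma = S^2$—so this generic part of the argument yields representativity at most $3$. The hypothesis $k\ge 5$ enters here to guarantee that the blocks are genuinely separated (non-consecutive claws share no edge and the $\gamma_i$ can be routed through distinct faces), so that the annular decomposition is honest rather than degenerate.

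To improve the bound from $3$ to $2$—which is the real content of the theorem and is exactly where the snark structure matters—I would use the twist. Because the wrap-around interchanges the two outer strands while fixing the inner one, the inner cycle $X=x_1\cdots x_k$ closes after a single trip around the cylinder (winding number $1$), whereas the two outer strands close only after \emph{two} trips, forming the single $2k$-cycle $W$ of winding number $2$. A simple closed curve of winding number $2$ cannot run parallel to a core of winding number $1$ inside an ordinary orientable cylinder; the only consistent closure is orientation-reversing, so the neighbourhood carries a crosscap. The odd twist then lets me reroute a meridian across only two of the three strands rather than all three: homologically the three link classes of $T_i$ satisfy a relation forced by the twist, so two of them already bound an essential arc. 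This produces a noncontractible curve meeting $J_k$ in just two edges, contradicting representativity at least $3$, and hence every embedding has representativity at most $2$.

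The step I expect to be the main obstacle is the flatness lemma on an arbitrary, possibly non-orientable, surface: one must rule out that a block's incident faces wrap around a handle or a crosscap, and must track the cyclic order of the three strands consistently through every junction so that the parity of the twist can be read globally. The subsequent sharpening from three crossings to two is equally delicate, since it is precisely the point where a general cyclic $3$-cut argument (good only for the bound $3$) has to be combined with the specific odd twist of the flower snark; getting this parity bookkeeping right, uniformly for all surfaces and all $k\ge 5$, is where the technical weight of the proof lies.
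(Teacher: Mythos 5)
This theorem is not proved in the paper at all: it is an external result quoted with a citation to Mohar and Vodopivec~\cite{mohar2006polyhedral}, so there is no in-paper argument to compare yours against; the only thing the paper needs from it is the statement. Judged on its own, your proposal is a strategy outline with two genuine gaps rather than a proof. First, the ``flatness lemma'' is not a consequence of representativity at least $3$. A simple closed curve meeting the embedding only in the three link edges of $T_i$ need not bound a disk nor separate the blocks as you describe; if it is noncontractible you have only shown representativity at most $3$ (not $2$) and the annular decomposition never gets off the ground, and if all such curves are separating you still must prove they can be chosen pairwise disjoint with each complementary region an annulus containing exactly one claw. You flag this yourself as ``the main obstacle,'' which is an admission that the central structural claim is unestablished.

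Second, and more seriously, the sharpening from $3$ to $2$ --- which you correctly identify as the real content of the theorem --- rests entirely on the sentence that ``homologically the three link classes of $T_i$ satisfy a relation forced by the twist, so two of them already bound an essential arc.'' No such relation is exhibited and no noncontractible closed curve meeting $J_k$ in exactly two points is constructed; the winding-number observation about the outer $2k$-cycle shows at best that a regular neighbourhood of $J_k$ is not an orientable annulus, not that a two-point essential curve exists. The published proof does not proceed by this topological cut-and-paste at all: it is a combinatorial analysis of facial walks in a hypothetical embedding of large representativity, tracing how faces must propagate through consecutive claws and deriving a contradiction from the twist, which sidesteps both of your problematic steps. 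As written, your argument is incomplete exactly at its decisive step.
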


It is therefore natural to ask how many CiDCs do Flower snarks have. We answer
this question in our paper~\cite{lin-rep} where we prove that
Flower snark with $n$ vertices has $\Theta(2^n)$ circuit double covers.

Also the theorems proven above give better bounds
if the graph either has a face of a linear size or has the sizes of the faces
bounded
by a constant. This leads to a question whether we do not loose too much by just
changing the strategy in the middle and whether there is some better way.

We answer this question negatively at least for the planar case.
The counterexample is an {\em antiflower} of size $k$:
\begin{itemize}
  \item Take a flower of size $k$,
  \item add another layer of faces (we call them green faces)
    around the flower so every non-central
    face (the purple faces) of the flower touches $k$ new faces,
  \item contract the outer face into a single vertex, and
  \item expand this vertex into a path so the graph is cubic and
    each of the green faces touches at most 4 other green faces.
\end{itemize}
Figure~\ref{fig:antiflower} shows
an antiflower of size 4, the outer face in the figure is the central face of the
flower used in the construction. Generally an antiflower of size $k$ has
$k$ purple faces and $k^2 - k$  green faces so each
purple face is incident with $k$ green faces.

\begin{figure}
  \centering
  \includegraphics[width=.7\textwidth]{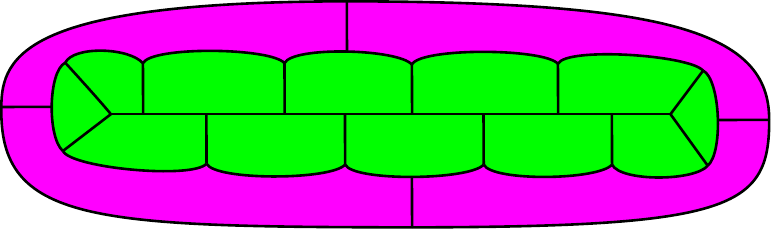}
  \caption{Antiflower of size four}
  \label{fig:antiflower}
\end{figure}

\begin{observation}
  Using the flower construction, no matter how we choose centers of the flowers,
  we obtain at most $2^{O(\sqrt{n})}$ CiDCs for the antiflower with $n$ vertices
  (the size of the antiflower is $\Theta(\sqrt{n})$).
\end{observation}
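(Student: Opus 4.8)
The plan is to turn the count produced by the construction into a sum of face sizes and then bound that sum using the rigid metric of the antiflower's dual. A single run of the construction picks a set $S$ of faces to serve as flower centres, pairwise at distance at least $3$ in the dual so that the flowers are disjoint and may be edited independently; the CiDCs it produces are exactly the independent combinations of the per-flower choices, so their number is $\prod_{f\in S} N_f$, where $N_f$ is the number of outer-fixed CiDCs of the flower centred at $f$. Since each flower is edited by one binary choice per edge of its central face (Figure~\ref{fig:flower_basic}), a centre of size $\ell$ satisfies $N_f \le 2^{\ell}$. Thus the number of CiDCs is at most $2^{\sum_{f\in S}\ell_f}$, and since the antiflower of size $k$ has $k^2+1$ faces and hence $n=\Theta(k^2)$, \ie $k=\Theta(\sqrt n)$, it suffices to show $\sum_{f\in S}\ell_f = O(k)$.

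First I would sort the faces by size. The central face and the $k$ \emph{purple} faces each have size $\Theta(k)$ (a purple face borders the centre, its two purple neighbours and its $k$ green faces), while every \emph{green} face has size $O(1)$ (it meets at most four green faces together with a bounded number of purple and outer-path edges). Calling the centre and the purple faces the \emph{big} faces, I would next observe that they are pairwise at dual-distance at most $2$: the centre is adjacent to every purple face, and any two purple faces share the centre as a common neighbour. As the centres in $S$ are pairwise at distance at least $3$, at most one big face lies in $S$, contributing $\Theta(k)$ to $\sum_{f\in S}\ell_f$.

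The crux is to bound $\abs{S\cap\set{\text{green faces}}}$. A short incidence count ($k$ purple faces, each meeting $k$ green faces, against the $k^2-k$ green faces) shows that $k$ green faces border two purple faces while the remaining $k^2-2k$ border exactly one, so \emph{every} green face has a purple neighbour. Fix a purple face $p$: its $k$ incident green faces are pairwise at dual-distance at most $2$ through $p$, so $S$ contains at most one green face adjacent to $p$. Setting $x_p=\abs{\set{g\in S:\ g\text{ green},\ g\sim p}}\le 1$ and summing over the $k$ purple faces gives $\sum_p x_p\le k$; since each green face of $S$ has at least one purple neighbour, $\abs{S\cap\set{\text{green faces}}}\le\sum_p x_p\le k$. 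Each such face has size $O(1)$, so the green faces of $S$ contribute $O(k)$.

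Together these bounds give $\sum_{f\in S}\ell_f = O(k)+O(k)=O(\sqrt n)$, whence at most $2^{O(\sqrt n)}$ CiDCs. I expect the green-face bound to be the main obstacle: one must read off the dual adjacencies of the antiflower accurately enough to see simultaneously that green faces sharing a purple neighbour are mutually close (so at most one per purple face may be selected) and that every green face does have a purple neighbour (so the $k$ purple faces cover them), which is exactly what caps the number of usable green flowers at $O(k)$ even though there are $\Theta(n)$ green faces overall.
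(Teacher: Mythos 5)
Your proof is correct and takes essentially the same route as the paper's: at most one ``big'' flower can be selected (the outer and purple faces are pairwise within dual distance $2$), while green-centred flowers have constant size and at most one may be chosen per purple face, so only $O(\sqrt{n})$ of them fit, yielding $2^{O(\sqrt{n})}$ in total. The only cosmetic difference is that you package the case analysis as a bound on $\sum_{f\in S}\ell_f$, which the paper handles implicitly.
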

\begin{proof}
  Note that the antiflowers are 3-edge-connected so the presented embedding
  into the plane is the only possible one (modulo changing which face is the outer one),
  as proven by Whitney~\cite{Whitney}.
  We analyse the possible choices of flowers. If we choose outer face as the center
  of the flower then there is no other flower disjoint with it and it has size
  $O(\sqrt{n})$. If we choose a purple face then the outer face is part of the
  flower so we can choose at most one purple face and the flower has size
  $O(\sqrt{n})$ again.

  Only the green faces remain. But every flower with center in the green face
  has size at most 6 as it neighbors with at most 4 other green faces
  and at most 2 purple faces. And when we select a green face, we cannot select
  another green face which neighbors with the same purple face. So we
  can select at most $\Theta(\sqrt{n})$ green faces at once.
\end{proof}

We did not prove any lower bound specially for antiflowers but they are
planar graphs so the general exponential bound proved below
(Theorem~\ref{thm:planar-exp-lb}) applies. Thus, the observation above describes only 
the limitations of the flower construction. 

\section{The Linear Representation}\label{sec:lin-rep}

In this section we describe what we call a {\em linear representation} for counting circuit double covers. 
In our paper~\cite{lin-rep} we develop this idea further: we define general (non-linear) 
representations, we study their properties and we define a linear representation
for counting CiDCs.
We remind that the number of CiDCs in a graph~$G$ is denoted $\Par(G)$.

Linear representations operate on {\em ordered multipoles}
as defined, e.g., by Nedela and Škoviera \cite{nedela96}. A {\em multipole}
$g = (V(g), E(g))$ is a pair of disjoint finite sets of vertices and, resp., edges.
Each edge $e \in E(g)$ has two ends and each end may or may
not be incident with a vertex.
There are three types of edges with two ends:
\begin{itemize}
  \item {\em Link} is an edge such that both its ends are incident with vertices.
  \item {\em Dangling edge} is an edge with only one end incident with a vertex.
  \item {\em Isolated edge} is an edge with no end incident with a vertex.
\end{itemize}
An end of an edge which is not incident with a vertex is called {\em semiedge}.
We denote $S(g)$ the set of all semiedges of a multipole $g$. A multipole $g$
is an {\em ordered multipole} if it is equipped with a linear order on $S(g)$.
If a multipole $g$ has $|S(g)| = k$, we say that it is a {\em $k$-pole} and
that it has size $k$. (Do not confuse the size with the order of a multipole which
is the number of its vertices.)

Let $g$ be a multipole and $s$, $s'$ its different semiedges. Then we may
create a new multipole $g'$ by joining these semiedges together. Formally
we distinguish two cases:
\begin{itemize}
  \item Both $s$ and $s'$ are ends of the same edge $e$. Then joining them
    together would create a vertex-less loop which we do not allow.
    Hence we create $g'$ by just removing $e$.
  \item Otherwise let $e$ be the edge containing $s$ and $e'$ the edge
    containing $s'$. Let $o$ and $o'$ be the other ends of $e$, resp., $e'$.
    Then $g' = (V(g), (E(g) \setminus \{ e, e' \}) \cup \{ f \})$ where
    $f$ is a new edge with ends $o$, $o'$.
\end{itemize}
{\em Gluing}, denoted $\J_g$, (called junction by Nedela and
Škoviera~\cite{nedela96}) is an operation which takes two ordered
$k$-poles $g$, $g'$ with semiedges $s_1,\dots,s_k$, resp., $s'_1,\dots,s'_k$
according to their orders and performs junction of $s_i$ and $s'_i$ for all
$i = 1, \dots, k$.

We will only work with ordered multipoles so from now on all $k$-poles
are ordered even without explicitly saying so. We allow both $V(g)$
and $E(g)$ to be empty. Note that 0-poles are just graphs.
We say that a multipole is cubic if all its vertices have degree three.
We denote $\kgadgets{k}$ the set of all cubic $k$-poles (as we are only interested
in cubic graphs).

We extend CiDCs to multipoles in the following way:
The CiDC of a multipole is a multiset of circuits and
paths which covers every edge of the multipole twice.
Both ends of each path must be semiedges and no edge or vertex can appear twice in
one path.

Note that given $g = \J_g(g_1, g_2)$, every CiDC of $g$ is uniquely
determined by a CiDC of $g_1$ and of $g_2$ and the way they are joined together.
On the other hand, when we combine a CiDC of $g_1$ and of $g_2$,
we may not get a CiDC. To deal with this will be key part of
the proof of the following fact that will be crucial in the later parts of
the paper.

\begin{theorem}\label{thm:lin-rep-short}
  For any $k \in \N$ there exists $c \in \N$, a function
  $h_\Par : \kgadgets{k} \to \R^c$ and a bilinear function
  $(\J_g)_\Par : \R^c \times \R^c \to \R$ such that for any cubic $k$-poles
  $g_1$, $g_2$ it holds
  $$\Par(\J_g(g_1, g_2)) = (\J_g)_\Par(h_\Par(g_1), h_\Par(g_2))$$
  and $h_\Par(g) \geq 0$ for every cubic $k$-pole $g$.
  We call the value $h_\Par(g)$ a {\em multiplicity vector}.
  Moreover, for $k \in \set{0, 2, 3}$ we may take $c = 1$.
\end{theorem}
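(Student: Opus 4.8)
The plan is to classify circuit double covers of a cubic $k$-pole by the way they connect the semiedges, and to show that this connection data lives in a finite set depending only on $k$. First I would record the basic combinatorics of a CiDC of a cubic $k$-pole $g$: each of the $k$ dangling edges is covered twice, and since both ends of every path must be semiedges while no path may reuse an edge, each semiedge is the endpoint of exactly two distinct paths. Hence a CiDC of $g$ consists of exactly $k$ boundary paths, pairing up the $2k$ path-ends (two per semiedge, with no pair inside a single semiedge), together with some internal circuits touching no semiedge. The pairing induced on the $2k$ path-ends is the first piece of the \emph{type} of the CiDC; there are only finitely many such pairings.

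I would then define $h_\Par(g) \in \R^c$ to be the vector whose coordinates count the CiDCs of $g$ of each type, and define $(\J_g)_\Par$ by assembling: for fixed types $t_1, t_2$ and each of the $2^k$ ways $\phi$ of joining the ports (cross or not at each semiedge), one builds an \emph{assembly graph} on the $4k$ path-ends whose connected components are cycles, each cycle being a closed walk of the glued $0$-pole $\J_g(g_1, g_2)$. Since every CiDC of the glued graph is determined uniquely by a CiDC of $g_1$, a CiDC of $g_2$, and the joining $\phi$, this would give $\Par(\J_g(g_1,g_2)) = \sum_{t_1,t_2} N_{t_1}(g_1)\,N_{t_2}(g_2)\,A(t_1,t_2)$, where $A(t_1,t_2)$ counts the joinings yielding a valid CiDC; this is manifestly bilinear, and nonnegativity of $h_\Par$ is immediate as its entries are counts. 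The hard part is that $A(t_1,t_2)$ must be well defined, i.e.\ whether an assembled closed walk is a genuine circuit must depend only on the types and $\phi$, not on hidden internal structure. The obstacle is exactly the self-touching phenomenon flagged before the statement: an assembly cycle built from several paths of $g_1$ revisits a vertex whenever two of those paths share a vertex of $g_1$, and a priori this sharing is internal information.

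To resolve this I would \emph{refine} the type to also record the \emph{share-graph} $\Gamma$ on the $k$ boundary paths, with an edge between two paths exactly when they pass through a common vertex (internal circuits never merge with anything, so only path--path sharing within a pole can produce a revisited vertex, and paths of $g_1$ are vertex-disjoint from paths of $g_2$). The key claim I must verify is that, given the refined types $t_i = (M_i, \Gamma_i)$ and a joining $\phi$, an assembly cycle is a valid circuit if and only if the $g_1$-paths it contains form an independent set in $\Gamma_1$ and likewise for $\Gamma_2$ — a condition read off entirely from $M_1, \Gamma_1, M_2, \Gamma_2, \phi$. Since pairings $M$ and share-graphs $\Gamma$ on $k$ elements each range over finite sets, the number $c$ of refined types is finite and bounded in terms of $k$ alone, and $A$ becomes a genuine $\R^{c}\times\R^{c}\to\R$ bilinear form. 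Finally, for $k \in \set{0,2,3}$ I would check the type space collapses: for $k=0$ there are no paths and $(\J_g)_\Par$ is multiplication ($\Par(G_1\sqcup G_2)=\Par(G_1)\Par(G_2)$); for $k=2$ and $k=3$ the refined type is forced and both/neither-crossing analyses reduce exactly to Observations~\ref{obs:cdc_2cut} and~\ref{obs:cdc_3cut}, giving the forms $(x,y)\mapsto 2xy$ and $(x,y)\mapsto xy$ with $c=1$.
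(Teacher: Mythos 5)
Your overall strategy --- classifying CiDCs of a cubic $k$-pole by a finite ``type'' consisting of the pairing of the $2k$ path-ends at the semiedges together with the sharing relation among the $k$ boundary paths, taking $h_\Par(g)$ to be the vector of counts by type, and letting $(\J_g)_\Par$ count the joinings $\phi$ that produce only genuine circuits --- is the same as the paper's, and your key claim (an assembly cycle is a circuit iff the $g_1$-paths it uses are pairwise non-sharing, and likewise for $g_2$) is exactly what the paper's boundary data is designed to certify. Recording shared \emph{vertices} rather than shared \emph{edges} is immaterial: in a cubic multipole two paths passing through a common degree-3 vertex must also share one of its three edges, so the two relations coincide.

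The genuine gap is the treatment of isolated edges, which the theorem must cover (cubic $k$-poles may contain them, and the replacement multipoles used later in Theorems~\ref{thm:4cyc} and~\ref{thm:5cyc} consist largely of isolated edges). If $g_1$ contains an isolated edge $e$, the two paths covering $e$ are two identical copies of the single-edge path $e$; swapping these copies at both ends of $e$ changes the joining $\phi$ but not the resulting collection of circuits. Hence the map from triples (CiDC of $g_1$, CiDC of $g_2$, joining $\phi$) to CiDCs of $\J_g(g_1,g_2)$ is $2^{f_1+f_2}$-to-one rather than a bijection, where $f_i$ is the number of isolated edges of $g_i$, and your identity $\Par(\J_g(g_1,g_2))=\sum_{t_1,t_2}N_{t_1}(g_1)\,N_{t_2}(g_2)\,A(t_1,t_2)$ overcounts by exactly that factor; this is the one place where your ``determined uniquely'' assertion fails. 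The paper repairs this by defining the $i$-th coordinate of $h_\Par(g)$ as the number of CiDCs of $g$ with the $i$-th boundary scaled by $(1/2)^f$, with $f$ the number of isolated edges of $g$; bilinearity then absorbs the correction, after one checks (as the paper does) that in a cubic multipole isolated edges are the \emph{only} source of duplicated paths. With that scaling added, your argument goes through.
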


\begin{proof}
The theorem would be obvious if we allowed $c = \omega$ -- then
we could take $\R^{\kgadgets{k}}$ and define $h_\Par(g) = e_{g}$.
The hard part is coming up with a finite representation. To do this,
we observe that for each $k$ there is only finitely many ways how
a CiDC can behave on the semiedges. 
We call these ways {\em boundaries} and we denote the set of them
$B^k = \{ B^k_i : i = 1, 2, 3, \dots, |B^k| \}$. Then:
\begin{itemize}
  \item we let $c = |B^k|$ be the number of boundaries; 
  \item every coordinate of the vector $h_\Par(g) \in \R^c$ is the number
    of CiDCs of $g$ with this boundary, 
      scaled down by $(1/2)^f$ where $f$ is the number of isolated edges in $g$;
      and
  \item we represent bilinear mapping $(\J_g)_\Par$ by a matrix in~$\R^{c \times c}$
    so that the element $((\J_g)_\Par)_{i,j}$ of the matrix is the number of CiDCs created by
    joining one CiDC with boundary $B^k_i$ with one CiDC with
    boundary $B^k_j$. This number can be at most $2^k$ (number of ways to 
    identify the two paths on one semiedge with the two paths at the 
    other semiedge we will identify with it). But it is often smaller, 
    as many of the identifications do not lead to a collection of circuits.
\end{itemize}
Below we describe how to specify these boundaries. Our approach leads
to $2^{\Theta(k^2)}$ boundaries of size $k$. (We prove in~\cite{lin-rep}
that no linear representation counting CiDCs can have less than
$2^{\Omega(k \log k)}$ boundaries of size $k$.)

When we glue two $k$-poles, we get $k$ binary choices
how to join the paths on the newly created edges. The question is what
we need to know about CiDCs inside the multipole to determine whether a particular
choice leads to a valid CiDC or not.
Because we are interested in cubic graphs, we can assume that no two paths
in a CiDC are identical (we ignore isolated edges for now).

To simplify the reasoning we proceed joining two semiedges at time.
So to be able to join the two semiedges we need to remember for every path 
which semiedges are its end points and which other paths it shares an edge
with. We will record this in the following way: The paths will be labeled
consecutively, starting at one. For each semiedge we record the labels of the
two paths incident with it, \eg the only possible configuration on boundary
of a gadget of size~$3$ 
has the first semiedge with paths 1 and 2, the second one with paths 1 and 3
and the last semiedge with paths 2 and 3.
We write this boundary $\Boundary{(1,2), (1,3), (2,3)}{}$.

Then for every two paths that share an edge and
it is not yet known from the description of the semiedges, we record a tuple
containing the numbers of these two paths. We write this after $|$ sign,
\eg if paths 5 and 6 are incident only inside the multipole, we would
write $\Boundary{\dots}{(6,5)}$.

The same boundary can obviously be written in many ways -- the operations
that preserve the same structure are renaming the paths,
swapping the order of the numbers in each tuple, and permuting
the tuples describing incidences inside the multipole.
To get rid of this non-uniqueness we just take the equivalence classes
under all these operations. In the implementation we represent
each class by its lexicographically minimal element.
Note that:
\begin{itemize}
  \item The 0-poles -- \ie graphs -- have exactly one boundary
    written $\Boundary{}{}$.
  \item There is no boundary of size one because a graph with a bridge cannot
    have a CiDC.
  \item There is one boundary of size two written $\Boundary{(1,2), (1,2)}{}$.
  \item There is one boundary of size three, namely
    $\Boundary{(1,2), (1,3), (2,3)}{}$.
\end{itemize}

Thus we verified the ``moreover'' part of the theorem.
We prove in~\cite{lin-rep} that the described representation
has 33 boundaries of size four. We also prove that there exists
a better representation with only 21 boundaries.

The multiplicity vector $h_\Par(g)$ for multipole $g$ describes
how many CiDCs with each boundary $g$ has. The value $(h_\Par(g))_i$ is $x / 2^f$ 
where $x$ is the number of CiDCs of $g$ with the $i$-th boundary
and $f$ is the number of isolated edges in $g$.
Why is the scaling by $f$ needed?

We have defined $((\J_g)_\Par)_{i,j}$ as the number of choices 
(from 0 to $2^k$) that join a CiDC with the $i$-th boundary with a CiDC with
$j$-th boundary and the result is a CiDC. This number is determined just by the
two boundaries, 
that is it does not depend on the particular multipoles we are joining. 
However, based on the choice of multipoles, some of the choices counted 
by~$((\J_g)_\Par)_{i,j}$ give the same set of circuits. 
This happens if (and only if) the choice between two paths ending in one semiedge is 
irrelevant, as the two paths share all edges.

However, we are dealing with cubic 
multipoles, so there is only one way how this can happen. 
Each isolated edge in any of the multipoles causes two 
of the choices to produce the same CiDC: for any isolated edge~$e$ 
changing the way we glue the CiDCs of~$g_1$ and of~$g_2$ at the both ends of~$e$ 
(which are different two of the $k$ semiedges) leaves us with the same 
set of circuits. To compensate for this we need to scale the value $h_\Par(g)$ by
$(1/2)^f$ where $f$ is the number of isolated edges in the multipole $g$.

Then when we join an isolated edge with anything, each CiDC is counted twice
so it cancels out the coefficient $1/2$ of the disappearing isolated edge.
\end{proof}

An easy exercise is to apply this theorem to replacement of a cubic vertex
with a triangle:

\begin{observation}\label{obs:triange_to_vertex}
  Replacing a vertex with a triangle in a cubic graphs doubles the number of circuit
  double covers.
\end{observation}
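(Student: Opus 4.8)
The plan is to realize both graphs as gluings of one common cubic $3$-pole with two different small gadgets, and then read off the answer from the $k=3$, $c=1$ case of Theorem~\ref{thm:lin-rep-short}. Let $v$ be the vertex we replace and let $H \in \kgadgets{3}$ be the cubic $3$-pole obtained from $G$ by deleting $v$ and leaving its three incident edges as dangling edges. Let $V \in \kgadgets{3}$ be the single-vertex $3$-pole (one vertex with three dangling edges) and let $T \in \kgadgets{3}$ be the triangle $3$-pole (three vertices joined in a triangle, each carrying one dangling edge). Ordering the semiedges consistently, we have $G = \J_g(H, V)$ and $G' = \J_g(H, T)$, since gluing $H$ to $V$ reconstructs $v$, while gluing $H$ to $T$ plugs the three former edges of $v$ into the triangle.

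Next I would invoke the ``moreover'' part of Theorem~\ref{thm:lin-rep-short}: for $k = 3$ we may take $c = 1$, so $h_\Par(H)$, $h_\Par(V)$, $h_\Par(T)$ are nonnegative scalars and the bilinear map $(\J_g)_\Par \colon \R \times \R \to \R$ is multiplication by a single constant $m \in \R$. The theorem then gives
$$\Par(G) = m\, h_\Par(H)\, h_\Par(V) \qquad\text{and}\qquad \Par(G') = m\, h_\Par(H)\, h_\Par(T).$$
Hence it suffices to show $h_\Par(T) = 2\, h_\Par(V)$; this forces $\Par(G') = 2\Par(G)$ directly, with no need to argue separately that the common factor $m\, h_\Par(H)$ is nonzero.

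It remains to compute the two scalars, which (as there is a unique boundary $\Boundary{(1,2),(1,3),(2,3)}{}$ of size three and no isolated edges in $V$ or $T$) are simply the numbers of outer CiDCs of the two gadgets. For $V$ this is immediate: both ends of every path must be semiedges, so each path runs through $v$ joining two of its three dangling edges, and the only way to cover each dangling edge exactly twice is the three arcs of Figure~\ref{fig:cdc-3-vertex}; thus $h_\Par(V) = 1$. The main (though still small) obstacle is $h_\Par(T)$: here I would enumerate the $2^3$ crossing/non-crossing choices on the three internal edges of the triangle and discard those whose walks merge into a self-touching path rather than an admissible collection matching the boundary. Exactly two survive --- the two configurations drawn in Figure~\ref{fig:cdc-triangle} --- so $h_\Par(T) = 2 = 2\, h_\Par(V)$, which completes the argument.
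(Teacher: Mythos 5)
Your proof is correct and follows the same strategy as the paper: both arguments reduce the claim to the $k=3$, $c=1$ case of Theorem~\ref{thm:lin-rep-short}, writing $G=\J_g(H,V)$ and $G'=\J_g(H,T)$ and using bilinearity so that everything hinges on the scalar identity $h_\Par(T)=2\,h_\Par(V)$. The only (harmless) difference is how that identity is established --- you enumerate the outer CiDCs of the two gadgets directly (which is exactly what Figures~\ref{fig:cdc-3-vertex} and~\ref{fig:cdc-triangle} depict), whereas the paper infers the ratio from the single instance of the operation turning $K_2^3$ into $K_4$, using $\Par(K_2^3)=1$ and $\Par(K_4)=2$; both computations are sound.
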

\begin{proof}
  The dimension of the multiplicity vector of a 3-vertex is one
  (Theorem~\ref{thm:lin-rep-short}).
  The graph $K_4$ is obtained from three parallel edges (denoted $K_2^3$)
  by replacing a vertex by a triangle, and $\Par(K_4) = 2$ while $\Par(K_2^3) = 1$.
  Hence the multiplicity vector of triangle is twice the multiplicity vector
  of a 3-vertex.
  (See Figures~\ref{fig:cdc-3-vertex} and~\ref{fig:cdc-triangle}.)
\end{proof}

Although a very simple observation, it gives us an infinite class of graphs
with exactly $2^{n/2 - 1}$ CiDCs. Note that these graphs are called Klee graphs. 
They are planar, in fact they are exactly the uniquely edge-3-colorable cubic
planar graphs~\cite{fowler-phd}. 

\begin{corollary}\label{cor:triangle}
  An $n$-vertex graph created from three parallel edges by repeatedly expanding
  vertices to triangles has exactly $2^{n/2 - 1}$ circuit double covers.
\end{corollary}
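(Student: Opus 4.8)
The plan is to prove this by induction on $n$, using the two facts already established: the base case $\Par(K_2^3) = 1$ for the three-parallel-edges graph, and Observation~\ref{obs:triange_to_vertex}, which says each triangle-expansion doubles the number of CiDCs. The key bookkeeping is to track how the vertex count changes under a single expansion so that the exponent $n/2 - 1$ comes out correctly.

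First I would pin down the base case. The graph $K_2^3$ (three parallel edges joining two vertices) has $n = 2$ vertices and, by the remark in Observation~\ref{obs:triange_to_vertex}, exactly $\Par(K_2^3) = 1$ circuit double cover. Checking the formula: $2^{n/2 - 1} = 2^{2/2 - 1} = 2^0 = 1$, so the claim holds at the start of the construction.

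Next I would analyze a single expansion step. When we replace one vertex (degree $3$) by a triangle, we delete that vertex and add three new vertices forming the triangle, so the vertex count increases by $3 - 1 = 2$; that is, $n \mapsto n + 2$. By Observation~\ref{obs:triange_to_vertex}, the number of circuit double covers simultaneously doubles, $\Par \mapsto 2\,\Par$. Thus if the graph before the step has $n$ vertices and satisfies $\Par = 2^{n/2 - 1}$, then the graph after the step has $n + 2$ vertices and $\Par = 2 \cdot 2^{n/2 - 1} = 2^{(n+2)/2 - 1}$, which is exactly the formula for the new vertex count. The inductive step therefore closes cleanly.

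I do not expect a genuine obstacle here, since both the doubling and the vertex-count increment are supplied by the earlier observation; the proof is essentially an induction that matches the arithmetic of the exponent to the arithmetic of the construction. The only point requiring a word of care is that every graph in the described class is reached from $K_2^3$ by \emph{some} finite sequence of vertex-to-triangle expansions, so that the induction on the number of expansion steps (equivalently, on $n$ through the relation $n = 2 + 2\cdot(\text{number of steps})$) indeed covers every such graph; this is immediate from the definition of the class, and I would state it explicitly to conclude.
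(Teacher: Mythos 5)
Your proof is correct and follows exactly the route the paper intends: the corollary is derived from Observation~\ref{obs:triange_to_vertex} by induction on the number of expansion steps, with the base case $\Par(K_2^3)=1$ at $n=2$ and each expansion adding two vertices while doubling the count. The paper leaves this arithmetic implicit, so your explicit write-up adds nothing beyond what was intended but omits nothing either.
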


Note that Corollary~\ref{cor:triangle} shows that Conjecture~\ref{con:cdc-count}
cannot be any
stronger as there is an infinite family of graphs for which this conjecture is
tight. On the other hand a stronger version might hold for triangle-free
or more cyclically connected graphs.
We conclude this section with a result
we obtained in \cite{lin-rep} where we tested all $\set{C_3, C_4}$-free
cubic biconnected graphs on 20 or less vertices.\footnote{
  We excluded $C_3$ and $C_4$ due to Corollary~\ref{cor:fin} below.
}
We discovered the graph closest to the bound of Conjecture~\ref{con:cdc-count}
is Petersen graph with ratio $3.25$. Two more tested graphs had the ratio
$\nu(G)/2^{\abs{V(G)}/2 - 1}$ less than 10, all other had higher ratios.
For details see Figure~\ref{fig:cdc-plot}. The blue points represent tested
graphs and the purple line is the lower bound of Conjecture~\ref{con:cdc-count}.
The data were obtained by experiment\footnote{
  The experiments together with other code can be found in our gitlab repo
  \url{https://gitlab.kam.mff.cuni.cz/radek/cdc-counting}.
} \Experiment{test_exp_cdc.sh}.

\begin{figure}[tbh]
  \centering
  \begin{tikzpicture}
    \begin{axis}[
        width=0.85\textwidth,
        height=8cm,
        ymode=log,
        xmin=9,
        xmax=21,
        xtickmin=10,
        xtickmax=20,
        xlabel={The number of vertices},
        ylabel={The number of CiDCs (log scale)},
        legend style={ legend pos=north west },
      ]
      \addplot [blue, only marks] table [header=false, col sep=comma] {img/num-cdc-data.csv};
      \addplot [purple, domain=9.5:20.5] {2^(x/2 - 1)};
      \legend {Tested graphs, Conjecture~\ref{con:cdc-count}};
    \end{axis}
  \end{tikzpicture}
  \caption{The number of CiDCs of $\set{C_3, C_4}$-free cubic biconnected graphs}
  \label{fig:cdc-plot}
\end{figure}
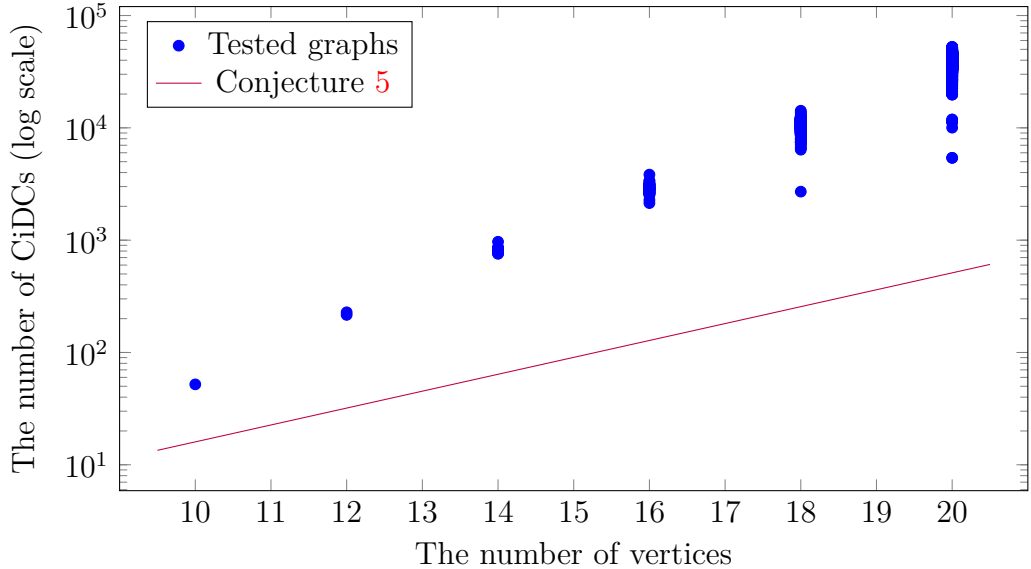

\section{Reducing Cycles}

In this section we combine the framework with linear programming
to obtain a better bound on the number of CiDCs of planar graphs.
First we describe the method in general and then we apply it to
the number of CiDCs of planar graphs. It is also straightforward to use
this method for other linear representations.

\subsection{General Method}

We want to show that graphs in some class $\mathcal C$ have many CiDCs and
we know that there is a small set of multipoles $\mathcal S$ such that every graph
in $\mathcal C$ either has some multipole of $\mathcal S$ as an induced subgraph
or it is trivial in some sense. We denote the class of the trivial cases
$\mathcal B \subset \mathcal C$. The usual reasons for a graph to be considered
trivial are a small number of vertices and existence of small cuts.

We can for every multipole $s \in \mathcal S$ choose a set of multipoles
with fewer vertices
$\mathcal R_s$ and try to prove that the number of CiDCs of a graph $G$
containing $s$ can be bounded from below by the number of CiDCs of $G$ with $s$
replaced by elements of $\mathcal R_s$ and that these graphs also belong
to $\mathcal C$. This allows us to proceed by induction on the number of
vertices.

A bit more formally (for an application see the proof of
Theorem~\ref{thm:planar-exp-lb}):
Suppose we are proving a lower bound of a form $c^{n(G) - d}$
where $c > 1$, $d \in \R$. Then we want to show the inequality
$$\Par(\J_g(g, s)) \geq
    \min_{r \in \mathcal R_s} c^{n(s) - n(r)} \Par(\J_g(g, r))$$
where $n(g)$ is the number of vertices\footnote{
  Do not confuse with $|g|$ which denotes the size of the multipole, \ie
  the number of its semiedges.
} of a multipole $g$.
Suppose this is true for all $\J_g(g, s) \in \mathcal C$, all the graphs
$\J_g(g, r)$ also belong to $\mathcal C$ and we have other means to prove
the bound for graphs in $\mathcal B$. Then we can prove the desired
lower bound $c^{n(G) - d}$ for every $G \in \mathcal C$ by induction
on the number of vertices
using this formula as the induction step for the non-trivial graphs.

Proving this formula for each $s$ is where the linear programming
comes into play. We saw a special case of this approach before in
Observation~\ref{obs:triange_to_vertex}. But in that case there was
only one boundary and one substitution 3-pole, so no linear program
was needed.
The idea of the program is similar to the LP relaxation of integer programs, 
except now the ``real'' program is restricted not to integer vectors 
but to representations of multipoles. 


\begin{theorem}\label{thm:lp}
  Let $c > 1$, $s$, $\mathcal R_s$ be defined as above
  and to simplify the notation put $\J = \J_g$.
  If the objective value of the linear program $P$ (described below)
  is at least~1 then
  the following holds for all multipoles $g$:
  \begin{equation}\label{eq:wanted}
    \Par(\J(g, s)) \geq
      \min_{r \in \mathcal R_s} c^{n(s) - n(r)} \Par(\J(g, r))
  \end{equation}
  where $n(g)$ is the number of vertices of multipole $g$ and
  the linear program $P$ is:
  \begin{align*}
    \min_{m \in \R^{B^{|s|}}} \J_\Par(m, h_\Par(s))& \\
    c^{n(s) - n(r)} \J_\Par(m, h_\Par(r)) &\geq 1 \quad \forall r \in \mathcal R_s\\
    m &\geq 0 
  \end{align*}
\end{theorem}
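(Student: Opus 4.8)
The plan is to read inequality~\eqref{eq:wanted} as a statement that is automatically certified by \emph{any} feasible point of the linear program $P$, and then to exhibit the suitably scaled multiplicity vector of $g$ as such a point. This is exactly the relaxation principle announced just before the theorem: in $P$ the variable $m$ ranges over \emph{all} nonnegative vectors of $\R^{B^{|s|}}$, whereas the vectors coming from genuine multipoles are only those of the form $h_\Par(g)$. Since the latter sit inside the former, a bound that holds for the relaxation holds a fortiori for true multipoles, and the hypothesis ``objective value $\geq 1$'' is precisely what we need to transfer.

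Concretely, I would fix a multipole $g$ with $|g|=|s|$ (so that $\J(g,s)$ and all $\J(g,r)$ are defined) and abbreviate the right-hand side of~\eqref{eq:wanted} by $\mu = \min_{r \in \mathcal R_s} c^{n(s)-n(r)}\Par(\J(g,r))$. If $\mu = 0$ there is nothing to prove, since $\Par(\J(g,s))$ counts circuit double covers and is therefore nonnegative. So I may assume $\mu > 0$ and set $m = h_\Par(g)/\mu$. By Theorem~\ref{thm:lin-rep-short} we have $h_\Par(g) \geq 0$, hence $m \geq 0$, which is the last constraint of $P$.

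Next I would verify the remaining constraints. Using the defining identity $\Par(\J(g,r)) = \J_\Par(h_\Par(g), h_\Par(r))$ together with the bilinearity of $\J_\Par$, for every $r \in \mathcal R_s$ one gets $c^{n(s)-n(r)}\J_\Par(m, h_\Par(r)) = \tfrac{1}{\mu}\, c^{n(s)-n(r)}\Par(\J(g,r)) \geq \tfrac{\mu}{\mu} = 1$, where the inequality is just the definition of $\mu$ as a minimum. Thus $m$ is feasible for $P$. Since the objective value of $P$ is at least $1$ by hypothesis, every feasible point has objective $\geq 1$; applying this to $m$ yields $\J_\Par(m, h_\Par(s)) \geq 1$, that is $\tfrac{1}{\mu}\Par(\J(g,s)) = \tfrac{1}{\mu}\J_\Par(h_\Par(g), h_\Par(s)) \geq 1$, which rearranges to exactly~\eqref{eq:wanted}.

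I do not anticipate a genuine obstacle: the entire technical weight already lives in Theorem~\ref{thm:lin-rep-short}, which guarantees both the bilinear factorization of $\Par\circ\J$ and the nonnegativity of the multiplicity vectors. The only points that need care are the degenerate case $\mu = 0$ (handled by nonnegativity of $\Par$) and the observation that dividing $h_\Par(g)$ by $\mu$ turns the minimum-defining inequalities into the normalized constraints $\geq 1$ of $P$. The conceptual heart of the argument is simply recognizing that $h_\Par(g)/\mu$ is a lawful competitor in the minimization defining $P$.
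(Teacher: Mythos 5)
Your proposal is correct and follows essentially the same route as the paper's proof: both take $\mu$ (the paper calls it $\alpha$) to be the right-hand side of~\eqref{eq:wanted}, dispose of the degenerate case $\mu=0$, and show that $h_\Par(g)/\mu$ is a feasible point of the minimization $P$, so the hypothesis on the objective value gives $\J_\Par(h_\Par(g)/\mu, h_\Par(s)) \ge 1$ and hence the claim. No substantive differences.
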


\begin{proof}
  Consider a particular multipole~$g$. 
  Recall (Theorem~\ref{thm:lin-rep-short} and its proof) that $\J_\Par$ is a bilinear function and $h_\Par(g) \ge 0$. 
  To show~\eqref{eq:wanted}, let $\alpha$ be the right-hand side of this inequality. 
  This means that 
      $$
        \alpha = \min_{r \in \mathcal R_s} c^{n(s) - n(r)} \J_\Par(h_\Par(g), h_\Par(r)). 
      $$
  If $\alpha = 0$ then~\eqref{eq:wanted} is trivially true. Otherwise, put $m = h_\Par(g)/\alpha$. 
  Due to the bilinearity of~$J_\Par$, vector $m$ is a feasible solution to~$P$. 
  If the objective value of~$P$ is at least~1, then (again by bilinearity) we have in particular
  $\J_\Par(h_\Par(g)/\alpha, h_\Par(s)) \ge 1$. This implies $\J_\Par(h_\Par(g), h_\Par(s)) \ge \alpha$ which
  is equivalent to~\eqref{eq:wanted}. 
\end{proof}

Note that if the objective value of the linear program is less than one
but still more than zero then exponential bound for a smaller
$c$ might hold. This theorem also holds for any other
linear representation which maps multipoles to non-negative vectors.

The downside of a linear program is that it is usually solved by a numerical
method which is not suitable for a theoretical proof. We circumvent this
by solving the dual problem. Note that any solution of the dual
gives us a lower bound but of course suboptimal solutions will give
weaker bounds. So for a given solution of the dual we only need to certify
that it is indeed a solution, that is, to verify that all the required
inequalities hold (we do not need to prove optimality of the solution).

\subsection{Application to Planar Graphs}

We are interested in bridgeless cubic planar
graphs. We know that every such simple graph contains a cycle of size at most five
because its dual is also a planar graph and so it contains a vertex of degree
at most five (due to Euler's formula). (And multigraph contains a small cut.)
Moreover, for $c \leq \sqrt{2}$ and $d \leq 2$
we may reduce the cuts of size two and three
due to Observations~\ref{obs:cdc_2cut} and~\ref{obs:cdc_3cut}.
So we take all bridgeless planar cubic graphs
as the class $\mathcal C$ and we define $\mathcal B$ to be
all graphs in $\mathcal C$ which are not cyclically 4-edge-connected.

We need to be able to replace 4-cycles and 5-cycles 
(cycles of the length 4, or 5, resp.). The important
observation is that if the graph is 3-edge-connected then all the 4-cycles
and 5-cycles are faces. What can we replace them with?
We need the replacements to be smaller. 
Ignoring the labeling of the semiedges we have the following options:
either a tree on two vertices or two isolated edges for the 4-cycle
(see Figure~\ref{fig:repl-4}) and
a tree on three vertices or
a combination of a cubic vertex and an isolated edge for the 5-cycle
(Figure~\ref{fig:repl-5}).

\begin{figure}
  \centering
  \includegraphics[scale=.7]{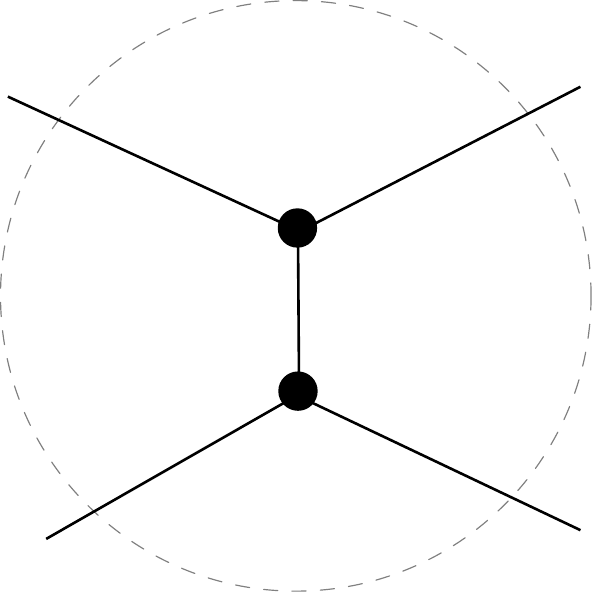}\hfil
  \includegraphics[scale=.7]{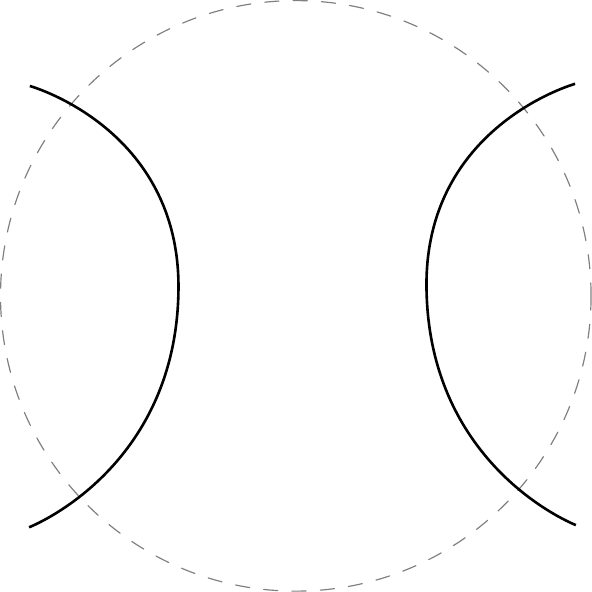}
  \caption{Possible replacements for a 4-cycle (up to permutation of semiedges).%
    \label{fig:repl-4}}
\end{figure}

\begin{figure}
  \centering
  \includegraphics[scale=.7]{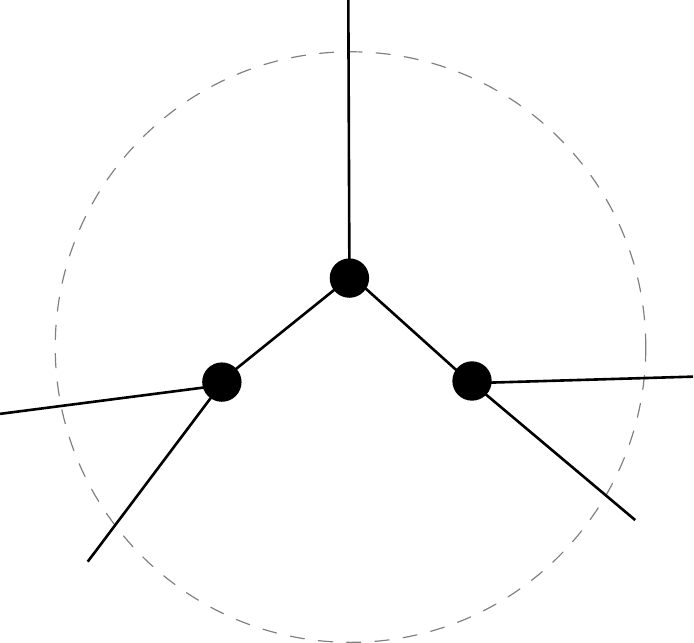}\hfil
  \includegraphics[scale=.7]{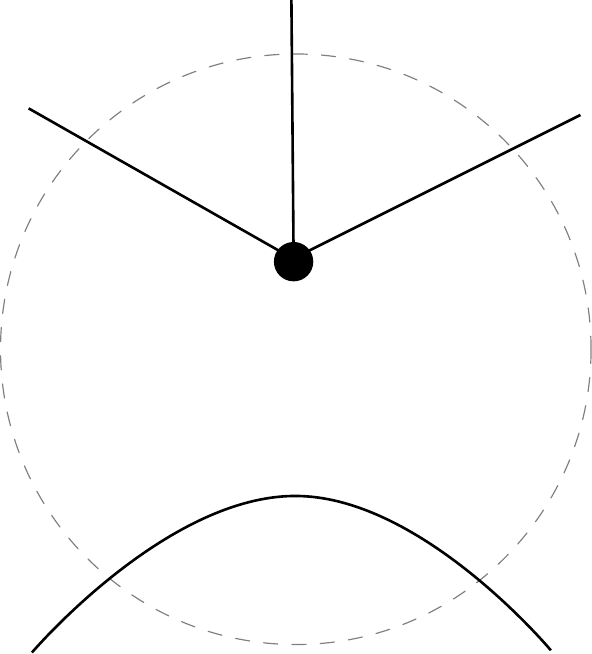}
  \caption{Possible replacements for a 5-cycle (up to permutation of semiedges).%
    \label{fig:repl-5}}
\end{figure}

We tried all the suggested replacements and the ones using trees never
gave a stronger bound than the ones using only free edges and cubic vertices.
Hence we will replace the 4-cycles with the two possible
non-crossing choices of two isolated edges and the 5-cycles with a cubic vertex
and an isolated edge (again drawn in a non-crossing way) in all the 5 possible rotations.
The following theorems show the results of this replacements:

\begin{theorem}\label{thm:4cyc}
  Let $G$ be a cyclically 4-edge-connected cubic graph with a 4-cycle.
  Let $G_1$ and $G_2$ be the two possible graphs obtained from $G$ by deleting
  two opposite edges of the 4-cycle and suppressing vertices of degree 2.
  Then $\Par(G) \geq 4 \min\set{\Par(G_1), \Par(G_2)}$.
\end{theorem}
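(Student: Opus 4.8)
The plan is to read this statement as a single instance of the linear-programming
reduction of Theorem~\ref{thm:lp}, taken with $c=\sqrt2$.

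First I would isolate the $4$-cycle as a $4$-pole. Since $G$ is cyclically
$4$-edge-connected it is $3$-edge-connected, so the $4$-cycle $C$ has no chord (a
chord would cut off a circuit across a $2$-cut), and the four edges leaving $C$ are
distinct and form a $4$-edge cut. Cutting them writes $G=\J_g(g,s)$, where
$s\in\kgadgets{4}$ is $C$ together with its four dangling edges (so $n(s)=4$) and $g$
is the complementary $4$-pole. Deleting two opposite edges of $C$ and suppressing the
two resulting degree-$2$ vertices replaces $s$ by one of the two non-crossing pairs of
isolated edges; these form $\mathcal R_s=\{r_1,r_2\}$, with boundaries
$\Boundary{(1,2),(3,4),(3,4),(1,2)}{}$ and $\Boundary{(1,2),(1,2),(3,4),(3,4)}{}$ and
with $n(r_1)=n(r_2)=0$. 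By construction $\J_g(g,r_1)=G_1$ and $\J_g(g,r_2)=G_2$;
moreover, since two opposite edges of a $4$-cycle are non-adjacent,
Observation~\ref{obs:cyc-rem2} guarantees that $G_1,G_2$ are bridgeless, so the
right-hand side is meaningful. With these identifications Theorem~\ref{thm:lp} with
$c=\sqrt2$ gives
\[
  \Par(G)\;\ge\;\min_{r\in\mathcal R_s}(\sqrt2)^{\,n(s)-n(r)}\,\Par(\J_g(g,r))
        \;=\;\min\{\,4\Par(G_1),\,4\Par(G_2)\,\}
        \;=\;4\min\{\Par(G_1),\Par(G_2)\},
\]
because $(\sqrt2)^{4-0}=4$. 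Everything therefore reduces to showing that the linear
program $P$ of Theorem~\ref{thm:lp} (for this $s$, this $\mathcal R_s$, and $c=\sqrt2$)
has objective value at least $1$.

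To verify this I would compute, by the recipe in the proof of
Theorem~\ref{thm:lin-rep-short}, the three multiplicity vectors over the boundary set
$B^4$. The vectors $h_\Par(r_1)$ and $h_\Par(r_2)$ are immediate: each replacement has
a single CiDC and $f=2$ isolated edges, so each is $\tfrac14$ times the unit vector of
the boundary listed above. The vector $h_\Par(s)$ is obtained by enumerating the $2^4$
strand configurations on the four cycle edges (the local pattern at each cubic vertex
is forced) and sorting them by boundary, and from the same data I would read off the
relevant entries of the bilinear matrix $\J_\Par$. Rather than solving $P$ numerically I
would certify a feasible dual solution of value $\ge1$; as $r_1$ and $r_2$ are
exchanged by a rotation of $C$, the natural candidate is the symmetric dual
$y_1=y_2=\tfrac12$, whose feasibility is exactly the coordinatewise inequalities
\[
  \J_\Par(e_B,h_\Par(s))\;\ge\;2\bigl(\J_\Par(e_B,h_\Par(r_1))+\J_\Par(e_B,h_\Par(r_2))\bigr)
  \qquad\text{for every }B\in B^4 .
\]
Because $h_\Par(g)\ge0$ and $\J_\Par$ is bilinear, these inequalities imply the claimed
bound through $4\min\{X,Y\}\le 2(X+Y)$.

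The main obstacle is precisely this last step: the honest work is the finite but
sizeable bookkeeping over the $33$ boundaries of size four — producing the three
multiplicity vectors and the needed columns of $\J_\Par$, and then confirming that some
dual solution of value $\ge1$ (conjecturally the symmetric one above) is feasible. This
is exactly where the combinatorial gain of the two opposite-edge replacements over the
$4$-cycle is quantified, and it is the part I would discharge by the accompanying
certified computation rather than by hand.
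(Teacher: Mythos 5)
Your proposal is correct and follows essentially the same route as the paper: cut out the $4$-cycle as a $4$-pole, take the two non-crossing pairs of isolated edges as $\mathcal R_s$, and apply Theorem~\ref{thm:lp} with $c=\sqrt2$, certifying the LP via the symmetric dual solution (your $y_1=y_2=\tfrac12$ is exactly the paper's $x_0=x_1=2$ after rescaling), with the $33$ coordinatewise inequalities left to a machine check just as the paper delegates them to its \texttt{reduce-cycle.py} experiment. The only additions are your explicit justifications that the $4$-cycle is chordless and that $G_1,G_2$ are bridgeless via Observation~\ref{obs:cyc-rem2}, both of which agree with the paper.
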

\begin{proof}
  Because the graph is cyclically 4-edge-connected and we are deleting
  non-adjacent edges, the resulting graph is still 2-edge-connected
  due to Observation~\ref{obs:cyc-rem2}.
  We apply Theorem~\ref{thm:lp} with $c = \sqrt{2}$, 4-cycle as $s$ and
  the two non-crossing choices of two isolated edges $\mathcal R_s$. We obtain the
  following linear program:
  \begin{align*}
    \max \sum_{i=0}^{32} o_i m_i& \\
    \left(\sqrt{2}\right)^{4} \sum_{i=0}^{32} a_i m_i &\geq 1\\
    \left(\sqrt{2}\right)^{4} \sum_{i=0}^{32} b_i m_i &\geq 1
  \end{align*}
  where $m_i$ are the variables and $o_i$, $a_i$ and $b_i$ are constants
  computed by experiment \Experiment{reduce-cycle.py}
  (although they might be computed by hand in
  this case). Each variable corresponds to a boundary and
  there is 33 boundaries of the size four hence
  there is 33 variables. Each inequality corresponds to an element of
  $\mathcal R_s$. Plugging in the values, taking dual and removing
  31 conditions obviously implied by other conditions, we get:
  \begin{align*}
    \max \frac{1}{4}x_0 + \frac{1}{4}x_1& \\
    x_0 &\leq 2\\
    x_1 &\leq 2 \\
    x_i &\geq 0 \quad \forall i \in \set{0, 1}
  \end{align*}
  The objective value of this linear program is 1. This satisfies the conditions of the
  theorem so we obtain:
  \begin{gather*}
    \Par(G) = \Par(\J(g, s)) \geq
    \min_{r \in \mathcal R_s} \left(\sqrt{2}\right)^{4} \Par(\J(g, r)) =
    4 \min\set{\Par(G_1), \Par(G_2)}. \qedhere
  \end{gather*}
\end{proof}

\begin{theorem}\label{thm:5cyc}
  Let $G$ be a cyclically 4-edge-connected cubic graph with a 5-cycle
  and no 4-cycle.
  Let $G_1, G_2, \dots, G_5$ be the 5 possible graphs obtained from $G$ by
  replacing the 5-cycle by a cubic vertex and an edge in non-crossing way
  (assuming the 5-cycle is a face).
  Then $\Par(G) \geq 5/2 \min_i \Par(G_i)$. If we replace the 5-cycle by
  a cubic vertex and an edge in all possible ways (\ie breaking planarity)
  we get $\Par(G) \geq 3.75 \min_i \Par(G_i)$.
\end{theorem}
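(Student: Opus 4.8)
The plan is to mimic the proof of Theorem~\ref{thm:4cyc} step for step. First I would check that the reduced graphs remain bridgeless (and, for the non-crossing placements, planar), so that they stay in the class~$\mathcal C$ and the induction behind Theorem~\ref{thm:planar-exp-lb} can continue. Then I would feed the $5$-cycle and its replacements into Theorem~\ref{thm:lp}, choosing $c$ so that the factor $c^{n(s)-n(r)}$ equals the advertised multiplier. Finally I would certify the resulting linear program by exhibiting a feasible dual solution, rather than solving the primal numerically.

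For the connectivity step, since $G$ is cyclically $4$-edge-connected it is $3$-edge-connected, so the $5$-cycle is a face whose five outgoing edges form a $5$-edge-cut; let $u_1,\dots,u_5$ be their outer endpoints. Cyclical $4$-edge-connectivity rules out triangles and the hypothesis rules out $4$-cycles, and these two facts force the $u_j$ to be pairwise distinct and the two endpoints joined by any admissible isolated edge to be non-adjacent in $G$: a coincidence $u_j=u_{j+1}$ or an edge $u_ju_{j+1}$ would create a triangle or a $4$-cycle through two consecutive cycle vertices, and $u_j=u_{j+2}$ would create a $4$-cycle through three of them. Hence no $G_i$ acquires a loop or a parallel edge. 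Bridgelessness is then checked exactly as in Observation~\ref{obs:cyc-rem2}: a bridge of $G_i$, taken together with the two cycle edges bounding the arc that carries the isolated edge, would be a circuit-separating cut of size three in $G$, contradicting cyclical $4$-edge-connectivity. In the non-crossing case the replacement is drawn inside the face, so $G_i$ is moreover planar and remains in~$\mathcal C$. This is exactly where the ``no $4$-cycle'' hypothesis is used.

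With connectivity in hand I would apply Theorem~\ref{thm:lp} with $s$ the $5$-cycle regarded as a $5$-pole, so the variables of the linear program are indexed by the boundaries of size five and the constraints by the elements of~$\mathcal R_s$. Each replacement $r$ (a cubic vertex together with an isolated edge) has $n(r)=1$ while $n(s)=5$, so $n(s)-n(r)=4$ and the multiplier produced by Theorem~\ref{thm:lp} is $c^{4}$. For the non-crossing bound I take $\mathcal R_s$ to be the five rotations and set $c=(5/2)^{1/4}$, so that $c^{4}=5/2$; the vectors $h_\Par(s)$, $h_\Par(r)$ and the bilinear form $\J_\Par$ are those computed by experiment \Experiment{reduce-cycle.py}. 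It then remains to show the program has objective value at least~$1$, which I would do by passing to the dual and exhibiting an explicit feasible dual solution of value~$1$; as in Theorem~\ref{thm:4cyc}, most primal constraints are dominated, so the certifying dual is small. Theorem~\ref{thm:lp} then gives $\Par(G)=\Par(\J(g,s))\ge c^{4}\min_i\Par(G_i)=\tfrac52\min_i\Par(G_i)$. For the second statement I repeat the computation with $\mathcal R_s$ the $\binom{5}{2}=10$ placements obtained by joining the isolated edge to an arbitrary pair of the five semiedges, and set $c=(15/4)^{1/4}$, obtaining the factor $3.75$.

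The main obstacle is computational rather than conceptual. Unlike the $4$-cycle, where the thirty-three boundaries of size four collapse to a two-line program checkable by hand, the $5$-pole forces us to work with the substantially larger set of boundaries of size five, so both assembling $\J_\Par$ and locating a small certifying dual genuinely rely on the experiment; the theoretical content is then only the routine verification that the exhibited dual vector satisfies all its inequalities and attains value~$1$. A secondary subtlety is the bookkeeping on the cubic-vertex side of the replacement, where one must exclude the various coincidences and adjacencies among $u_{j+2},u_{j+3},u_{j+4}$ that could destroy bridgelessness.
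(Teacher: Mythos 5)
Your proposal follows the paper's proof essentially step for step: the same reduction to Observation~\ref{obs:cyc-rem2} via the observation that the absence of $4$-cycles forces the two vertices to be joined to be distinct (hence the deleted edges non-adjacent), the same application of Theorem~\ref{thm:lp} with $n(s)-n(r)=4$ and $c=\sqrt[4]{5/2}$ (resp.\ $\sqrt[4]{3.75}$ with the ten replacement $5$-poles), and the same certification by an explicit feasible dual of value~$1$ computed by the experiment. No gaps; this is the paper's argument.
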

\begin{proof}
  We can simulate this replacement by removal of the two edges adjacent with
  the 5-cycle and the two vertices which should be connected by an edge in
  the result, contracting 5-cycle into a vertex and adding an edge to
  join the two vertices of degree two. These two vertices are distinct
  otherwise there would be a 4-cycle in the graph. So the deleted
  edges were not adjacent and the resulting graph is 2-edge-connected
  due to Observation~\ref{obs:cyc-rem2}.

  The rest of the proof is analogous to the proof of the previous theorem
  and the computer aided part is also a part of experiment
  \Experiment{reduce-cycle.py}.
  The (simplified) dual program for non-crossing case with $c = \sqrt[4]{5/2}$ is:
  \begin{align*}
     \max 0.4x_0 + 0.4x_1 + 0.4x_2 + 0.4x_3 + 0.4x_4& \\
    x_i &\leq 1 \quad \forall i \in \set{0, 1, 2, 3, 4} \\
    x_2 + x_4 &\leq 1 \\
    x_1 + x_4 &\leq 1 \\
    x_1 + x_3 &\leq 1 \\
    x_0 + x_3 &\leq 1 \\
    x_0 + x_2 &\leq 1 \\
    x_i &\geq 0 \quad \forall i \in \set{0, 1, 2, 3, 4} \\
  \end{align*}
  The solutions is $x_i = 0.5$ and the objective value is 1.
  The crossing case with $c = \sqrt[4]{3.75}$ and all the 10 possible
  replacement 5-poles (5 non-crossing plus 5 crossing) leads to
  a larger linear program so we omit it. But again the objective
  value is one, so we can apply the Theorem~\ref{thm:lp}.
\end{proof}

In both theorems the modified graphs have four vertices less than the original
ones so the best we can prove is that the number of CiDCs increases by the factor
2.5 (due to 5-cycles)
with addition of four vertices.
So we obtain:

\begin{theorem}\label{thm:planar-exp-lb}
  Every bridgeless cubic planar (multi)graph
  has at least $(5/2)^{n/4 - 1/2}$ circuit double covers.
\end{theorem}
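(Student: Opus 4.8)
The plan is to prove the bound by induction on the number of vertices $n$, using the reductions established above as the induction steps. We may assume $G$ is connected, as is standard for this problem (the bound can genuinely fail for disconnected graphs, e.g.\ two copies of $K_2^3$). Writing $c = \sqrt[4]{5/2}$, the claimed bound $(5/2)^{n/4 - 1/2}$ is exactly $c^{n-2}$, so it suffices to show $\Par(G) \geq c^{n-2}$ for every connected bridgeless cubic planar multigraph~$G$. The base cases are the graphs on which no reduction applies — essentially $K_2^3$ (where $\Par = 1 = c^0$) and $K_4$ (where $\Par = 2 \geq c^2 = \sqrt{5/2}$) — which I verify directly.

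First I would dispose of the graphs that are not cyclically 4-edge-connected. Such a $G$ has a circuit-separating cut of size at most three, and being bridgeless it has no cut of size one, so it has a circuit-separating 2-cut or 3-cut. Applying Observation~\ref{obs:cdc_2cut} gives $\Par(G) = 2\Par(G_1)\Par(G_2)$ with $n_1 + n_2 = n$, so by induction $\Par(G) \geq 2\,c^{n_1-2}\,c^{n_2-2} = 2\,c^{n-4} \geq c^{n-2}$, the last step because $c^2 = \sqrt{5/2} \leq 2$. Applying Observation~\ref{obs:cdc_3cut} gives $\Par(G) = \Par(G_1)\Par(G_2)$ with $n_1 + n_2 = n + 2$, so $\Par(G) \geq c^{n_1 - 2}\,c^{n_2 - 2} = c^{n-2}$ with equality. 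In both cases the reduced graphs are again connected bridgeless cubic planar (contracting one side of a cut preserves planarity and connectivity, and the constructions are cubic and bridgeless) and strictly smaller, so the induction hypothesis applies.

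It then remains to treat the cyclically 4-edge-connected case. Here the crucial structural point is that such a graph, unless it is $K_4$, has girth at least four: the three edges leaving any triangle form a 3-cut, and since this cut cannot be circuit-separating the side away from the triangle must be a forest; a short degree count then forces that side to be a single vertex, i.e.\ $G = K_4$. With triangles excluded, Euler's formula for a connected cubic planar graph, $\sum_{\text{faces}} (6 - |f|) = 12 > 0$, guarantees a face of size four or five, and since cyclic 4-edge-connectivity implies 3-edge-connectivity every such short cycle is a face. If $G$ has a 4-cycle I apply Theorem~\ref{thm:4cyc} to get $\Par(G) \geq 4\min\{\Par(G_1), \Par(G_2)\} \geq 4\,c^{n-6} \geq c^{n-2}$, using $c^4 = 5/2 \leq 4$; otherwise $G$ has a 5-cycle and no 4-cycle, and Theorem~\ref{thm:5cyc} gives $\Par(G) \geq \tfrac{5}{2}\min_i \Par(G_i) \geq \tfrac{5}{2}\,c^{n-6} = c^{n-2}$, with equality. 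The reduced graphs have four fewer vertices and stay in the class by Observation~\ref{obs:cyc-rem2}.

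The main obstacle is the 5-cycle step, and it is precisely what pins down the constant: the inequality $\tfrac{5}{2}\,c^{n-6} \geq c^{n-2}$ holds only because $c^4 = 5/2$, so the factor $5/2$ per four removed vertices supplied by Theorem~\ref{thm:5cyc} is exactly consumed, and no larger base than $5/2$ is obtainable by this argument. The remaining care is bookkeeping: confirming that each reduction keeps the graph inside the class of connected bridgeless cubic planar multigraphs (so the induction closes), that every reduced graph is strictly smaller, and that the finitely many irreducible graphs are covered by the base cases $K_2^3$ and $K_4$. The tightness of the 5-cycle step, together with Corollary~\ref{cor:triangle}, indicates that this constant cannot be improved without a genuinely different reduction.
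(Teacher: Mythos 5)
Your proof is correct and follows essentially the same route as the paper's: induction on $n$ with base cases $K_2^3$ and $K_4$, reducing non-trivial 2- and 3-cuts via Observations~\ref{obs:cdc_2cut} and~\ref{obs:cdc_3cut}, excluding triangles in the cyclically 4-edge-connected case, and then applying Theorems~\ref{thm:4cyc} and~\ref{thm:5cyc} to a face of size 4 or 5 guaranteed by Euler's formula, with the same arithmetic showing the 5-cycle step is tight. Your explicit remark that connectivity must be assumed (two disjoint copies of $K_2^3$ violate the stated bound) is a small but worthwhile precision that the paper leaves implicit.
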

\begin{proof}
  By induction on $n$, the number of vertices of the graph $G$.
  The base cases are three parallel edges and $K_4$
  because these are the only planar cubic graphs without a non-trivial cut.
  The three parallel edges graph has only one CiDC which exactly matches
  the bound. The $K_4$ has two CiDCs and the bound requires only approximately
  $1.58$.

  Suppose $n \geq 6$. If $G$ has non-trivial cut
  of size two, we apply Observation~\ref{obs:cdc_2cut} and we obtain
  \begin{align*}
    \Par(G) &\geq 2\Par(G_1)\Par(G_2)
    \geq 2 (5/2)^{|V(G_1)|/4 - 1/2} (5/2)^{|V(G_2)|/4 - 1/2} \\
    & = 2  (5/2)^{n/4 - 1/2} \sqrt{2/5} \geq (5/2)^{n/4 - 1/2}
  \end{align*}
  which we needed.
  Similarly for a non-trivial 3-cut we use Observation~\ref{obs:cdc_3cut}:
  \begin{align*}
    \Par(G) &\geq \Par(G_1)\Par(G_2)
    \geq (5/2)^{|V(G_1)|/4 - 1/2} (5/2)^{|V(G_2)|/4 - 1/2} \\
    & =  (5/2)^{(n + 2)/4 - 1}  = (5/2)^{n/4 - 1/2}.
  \end{align*}

  So $G$ is cyclically 4-edge-connected. It is 3-edge-connected
  so each facial walk in its planar embedding is a circuit.
  Due to Euler's formula, the planar dual of $G$ must have a vertex of
  degree at most five. Hence $G$ has a face of size at most five.
  We already excluded 3-faces because the cut around
  a triangle is a non-trivial cut of size three. If $G$ has a 4-face,
  we apply Theorem~\ref{thm:4cyc}:
  \begin{gather*}
    \Par(G) \geq 4 (5/2)^{(n - 4)/4 - 1/2} =
    \frac{4 \cdot 2}{5} (5/2)^{n/4 - 1/2} \geq (5/2)^{n/4 - 1/2}.
  \end{gather*}
  Otherwise $G$ has a 5-face and we apply Theorem~\ref{thm:5cyc}:
  \begin{gather*}
    \Par(G) \geq (5/2) (5/2)^{(n - 4)/4 - 1/2} = (5/2)^{n/4 - 1/2}.
    \qedhere
  \end{gather*}
\end{proof}

To compare this with Conjecture~\ref{con:cdc-count}, $(5/2)^{n/4} = 2^{cn}$
for $c$ approximately $0.33$ so this is still a weaker
bound than Conjecture~\ref{con:cdc-count} asks for.
We also tried to apply Theorem~\ref{thm:lp} to 6-cycles but we did not have
found a suitable set $\mathcal R$ -- neither isolated edges nor trees worked.
We conclude this section with a summary of what we know about a hypothetical
counterexample to Conjecture~\ref{con:cdc-count}:

\begin{corollary}\label{cor:fin}
  A minimal counterexample (the one with the smallest number of vertices) to
  Conjecture~\ref{con:cdc-count}:
  \begin{enumerate}
    \item does not have a 2-edge-cut,
    \item does not have a non-trivial 3-edge-cut,
    \item does not contain a triangle,
    \item does not contain a 4-cycle, and
    \item has at least 22 vertices.
  \end{enumerate}
\end{corollary}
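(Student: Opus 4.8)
The plan is to argue by minimality: let $G$ be a counterexample to Conjecture~\ref{con:cdc-count} with the fewest vertices, so $\Par(G) < 2^{n/2-1}$ while every bridgeless cubic graph on fewer than $n$ vertices satisfies the conjecture. For each of the four forbidden structures I would exhibit a reduction to strictly smaller bridgeless cubic graph(s), substitute the inductive bound $2^{m/2-1}$ for each smaller graph into the corresponding counting identity or inequality from the previous sections, and check that the result is again at least $2^{n/2-1}$, contradicting that $G$ is a counterexample. The whole argument is thus a bookkeeping of multiplicative factors against vertex-count deficits, and the crucial point is that exactly the listed structures reduce with enough slack.

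For item~1, if $G$ has a $2$-edge-cut then Observation~\ref{obs:cdc_2cut} gives $\Par(G)=2\Par(G_1)\Par(G_2)$ with $n(G_1)+n(G_2)=n$; substituting the inductive bound yields $\Par(G)\ge 2\cdot 2^{n(G_1)/2-1}2^{n(G_2)/2-1}=2^{n/2-1}$. For item~2, a non-trivial $3$-edge-cut gives $\Par(G)=\Par(G_1)\Par(G_2)$ with $n(G_1)+n(G_2)=n+2$ by Observation~\ref{obs:cdc_3cut}, so $\Par(G)\ge 2^{(n+2)/2-2}=2^{n/2-1}$. In both cases one must first confirm that $G_1,G_2$ are again bridgeless cubic and strictly smaller so that induction applies; this is where non-triviality of the $3$-cut is used. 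After items~1 and~2 the minimal counterexample is bridgeless with no circuit-separating cut of size at most $3$, that is, cyclically $4$-edge-connected.

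For item~3, contracting a triangle to a vertex produces a cubic graph $G'$ on $n-2$ vertices with $\Par(G)=2\Par(G')$ by Observation~\ref{obs:triange_to_vertex}, giving $\Par(G)\ge 2\cdot 2^{(n-2)/2-1}=2^{n/2-1}$; alternatively the three edges leaving a triangle already form a non-trivial $3$-cut, so item~3 also follows from item~2 once $n\ge 6$. For item~4, cyclic $4$-edge-connectivity lets me apply Theorem~\ref{thm:4cyc}: deleting two opposite (hence non-adjacent) edges of the $4$-cycle and suppressing keeps the graph $2$-edge-connected by Observation~\ref{obs:cyc-rem2}, removes four vertices, and gives $\Par(G)\ge 4\min_i\Par(G_i)\ge 4\cdot 2^{(n-4)/2-1}=2^{n/2-1}$. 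It is worth noting why $5$-cycles are \emph{not} on the list: Theorem~\ref{thm:5cyc} only supplies the factor $5/2$ against the same deficit of four vertices, and $5/2<4=2^{2}$, so the $5$-cycle reduction fails to recover the stronger bound $2^{n/2-1}$; this is precisely the gap between Theorem~\ref{thm:planar-exp-lb} and Conjecture~\ref{con:cdc-count}.

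Finally, item~5 combines the preceding four: a minimal counterexample is cyclically $4$-edge-connected (hence biconnected) and $\{C_3,C_4\}$-free, so it lies in the family exhaustively tested in Section~\ref{sec:lin-rep} (see Figure~\ref{fig:cdc-plot}), where every such graph on at most $20$ vertices was found to satisfy the conjecture with ratio at least that of the Petersen graph. Since cubic graphs have even order, the counterexample must then have at least $22$ vertices. I expect this last item to be the main obstacle, since it is not a structural reduction but a finite verification: the real content is the computation behind Figure~\ref{fig:cdc-plot}, and one must be careful that it certifies the full bound $2^{n/2-1}$ (rather than merely some weaker exponential) for \emph{all} cyclically $4$-edge-connected $\{C_3,C_4\}$-free cubic graphs up to $20$ vertices.
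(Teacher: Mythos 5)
Your proposal is correct and follows essentially the same route as the paper: items 1--3 via Observations~\ref{obs:cdc_2cut} and~\ref{obs:cdc_3cut} (a triangle's boundary being a non-trivial 3-cut), item 4 via Theorem~\ref{thm:4cyc} after noting that the first three items force cyclic 4-edge-connectivity, and item 5 by the exhaustive computation behind Figure~\ref{fig:cdc-plot}, with the same observation that the factor $5/2$ from Theorem~\ref{thm:5cyc} is too weak to exclude 5-cycles. The arithmetic bookkeeping of factors against vertex deficits checks out in every case.
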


The first three points are due to Observations~\ref{obs:cdc_2cut}
and~\ref{obs:cdc_3cut}, the fourth one due to Theorem~\ref{thm:4cyc}
and the last one was verified by a computation on all 3-edge-connected
cubic graphs up to 20 vertices (see \Experiment{test_exp_cdc.sh} experiment).
Note that we cannot exclude 5-cycles
as the bound provided by the second part of Theorem~\ref{thm:5cyc} is too weak.

\section*{Acknowledgements}

Large part the work was done during first author's Ph.D. study at
Computer Science Institute of Charles University.
The first author was supported by Institutional support
of research organization development (RVO18000)
of The Ministry of Education, Youth and Sports of Czech Republic.
The second author was supported by grant 22-17398S of the Czech Science Foundation. 

We thank to the anonymous referees for their helpful comments. 


\bibliographystyle{amsplain}
\bibliography{bibliography}

\listoftodos

\end{document}